\documentclass[a4paper,12pt,reqno]{amsart}
\usepackage{amsmath,amsfonts,amssymb,amsthm,enumerate}
\usepackage{hyperref}
\usepackage[utf8]{inputenc}

\newtheorem{theorem}{Theorem}[section]

\newtheorem{example}{Example}[section]
\newtheorem{corollary}{Corollary}[section]
\newtheorem{remark}{Remark}[section]

\newcommand{\cA}{\mathcal{A}}
\newcommand{\cB}{\mathcal{B}}
\newcommand{\cG}{\mathcal{G}}
\newcommand{\cH}{\mathcal{H}}
\newcommand{\cU}{\mathcal{U}}
\newcommand{\cV}{\mathcal{V}}
\newcommand{\fraka}{\mathfrak{a}}
\newcommand{\frakb}{\mathfrak{b}}
\newcommand{\bfa}{\mathbf{a}}
\newcommand{\bfb}{\mathbf{b}}
\newcommand{\bfe}{\mathbf{e}}
\newcommand{\bff}{\mathbf{f}}
\newcommand{\bfm}{\mathbf{m}}
\newcommand{\bfn}{\mathbf{n}}

\numberwithin{equation}{section}

\title[Digamma series]{Evaluation of terminating and non-terminating sums containing the digamma function}

\author{Sergei Kalmykov$^{1,2}$}
\address{$^{1}$School of Mathematical Sciences, CMA-Shanghai, Shanghai Jiao Tong University, 800 Dongchuan RD, Shanghai 200240, China} 
\address{$^{2}$
Keldysh Institute of Applied Mathematics 
of Russian Academy of Sciences,
Miusskaya pl., 4,
125047, Moscow,
Russia}
\email{kalmykovsergei@sjtu.edu.cn} 

\author{Dmitrii Karp$^{3,4}$}
\address{$^{3}$Department of Mathematics, Holon Institute of Technology, Holon, Israel} 
\address{$^{4}$Institute of Mathematics and Informatics, Bulgarian Academy of Sciences, Sofia, Bulgaria} 
\email{dimkrp@gmail.com} 

\author{Vinay Shukla$^{5,6,\#}$}{\thanks{$^{\#}$Corresponding author}}
\address{$^{5}$School of Mathematical Sciences,  Shanghai Jiao Tong University, 800 Dongchuan RD, Shanghai 200240, China} 
\address{$^{6}$Department of Mathematics, School of Computer Science Engineering and Technology, Bennett University, Greater Noida, India} 
\email{vinayshukla4321@gmail.com, vnshukla01@sjtu.edu.cn}

\allowdisplaybreaks

\begin{document}
\keywords{Digamma series, Hypergeometric series, summation formula, hypergeometric identity, degeneration process}
\subjclass[2020] {33C20, 33C05, 33B15}
\begin{abstract}

We derive transformation and summation formulas for terminating and
nonterminating series involving the digamma function.  Our principal results are obtained by a limiting process starting with duality relations for the generalized hypergeometric functions and their consequences. Selected formulas are further extended by parameter differentiation of Euler's transformation and by using
contiguous relations. 
Most of our identities express products of
hypergeometric and digamma  series in terms of hypergeometric functions, and some evaluations of terminating digamma sums involve Bernoulli polynomials.  In several cases the digamma contributions
cancel, producing identities involving only products of hypergeometric
functions. 
\end{abstract}
\maketitle

\section{Introduction}

Series involving the digamma function $\psi(z)=\Gamma'(z)/\Gamma(z)$ arise naturally in several contexts. The first one appears when a generalized hypergeometric series is differentiated with respect to parameters, in view of 
$$
\frac{d}{da}(a)_n = (a)_n [\psi(a+n)-\psi(a)],\qquad\frac{d}{da}\frac{1}{(a)_n}= \frac{1}{(a)_n}[\psi(a)-\psi(a+n)],
$$
where $(a)_{n}=\Gamma(a+n)/\Gamma(a)$ stands for the Pochhammer symbol (also called the rising factorial).  Another context, where digamma series play a prominent role, is evaluation of series and sums of harmonic numbers, in particular, those that arise in the computation of $\pi$; see, for instance, \cite{Chu-Campbell}.

Generally speaking a series containing digamma functions is not hypergeometric (the ratio of consecutive terms is not a rational function of the summation index), but frequently can be written in terms of multivariate hypergeometric functions, see, for instance \cite{Ancarani_Gasaneo_2008,Ancarani_Gasaneo_2009,Ancarani_Gasaneo_2010,Apelblat_2020,Brychkov_Geddes_2004,Cvijovi_Miller_2010}.  In special situations, however, such series  can be summed in terms of  gamma and digamma functions or transformed into univariate hypergeometric series.  One reason such transformations may be useful is numerical evaluation which may be delicate for digamma series while for hypergeometric series various well-established numerical routines are readily available. Furthermore, numerous identities and transformation formulas known for hypergeometric functions may furnish additional insights into the original digamma series.  

The first comprehensive collection of evaluations of such $\psi$-series is likely due to Hansen \cite[Section 55, pp. 360–366]{hansen1975table}; additional formulas, numbered as $55.*$, appear in the addendum by Borwein \cite{borweinaddendum}. A systematic summary of results up to 2008 is provided in the monograph by Brychkov \cite{Brychkov_2008}, especially in Sections 1.30.2 and 6.2. Miller's 2006 paper \cite{Miller_2006} is the first modern paper in the line that treats digamma-weighted hypergeometric-type series as objects that can sometimes be reduced to closed hypergeometric/special-function form. Cvijović \cite{Cvijovi_2008} explicitly positions itself as a refinement and extension of Miller’s work. González-Santander and Sánchez Lasheras use exactly this method in \cite{Santander_Juan_2022} and \cite{Santander_Juan_2023}, explicitly stating that they derive finite and infinite digamma sums by differentiating reduction formulas.  Several works on digamma series are motivated by physical applications, particularly evaluation of Feynman diagrams, see \cite{Ancarani_Gasaneo_2010,Bera_2023,Coffey_2005,greynat2014new,kalmykov2010all}.

The present paper studies identities obtained by applying a limit process to multi-term hypergeometric identities from \cite{Cetinkaya_Karp_2021,Karp_Kuznetsov_2021}. Such a limiting process is needed when some terms in the original identities become singular, but their combinations can still be regularized and attain finite values.  This process leads to summation and transformation formulas for infinite series and finite sums involving the digamma function. In some cases the digamma terms cancel completely, leading instead to presumably new multi-term hypergeometric identities. Motivated by formulas obtained by this limit process, we generalize some of them  by manipulating parameter derivatives of known hypergeometric identities. A similar approach has been previously applied  by A.\:\c{C}etinkaya and the second author in \cite{Asena_Karp_2025} to compute degenerate forms of multi-term identities for the generalized hypergeometric series evaluated at unity.  In contrast, in this paper, we derive identities containing an arbitrary argument (properly restricted to ensure convergence).  To this end we start with duality relations involving sums of products of hypergeometric series and apply degeneration to them.

This paper is structured as follows. Section~\ref{sec:duality-digamma} applies a degeneration process to the duality relation in \cite[Theorem~1]{Karp_Kuznetsov_2021}, leading to sum-of-product identities with one factor being digamma series while the other being hypergeometric.  This entails several corollaries including summation formulas, which we generalize using a different method.

Section \ref{sec:finite-digamma-sums} applies the limiting process to Theorem~6.2 of \cite{Cetinkaya_Karp_2021}, thereby obtaining a novel summation formula for a terminating digamma sum by expressing it in terms of hypergeometric series and Bernoulli polynomials. Section~\ref{sec:contiguous-degeneration} examines limiting cases of the identities in \cite[Lemmas~6.4 and~6.5]{Cetinkaya_Karp_2021}, leading somewhat surprisingly to purely hypergeometric identities.  These identities are then extended by a technique based on contiguous relations. 
For better readability, proofs obtained by degeneration are
collected in the Appendix whenever the corresponding result is later
generalized by a different method.

\section{From duality relations to digamma series}\label{sec:duality-digamma}

Let us fix the notation. Given an integer $r\ge1$, denote
\begin{equation}\label{eq:integer-vector-notation}
\begin{split}
&\fraka=(\alpha_1,\ldots,\alpha_r)\in\mathbb{C}^r,\quad \bfn=(n_1,\ldots,n_r)\in\mathbb{Z}^r,\quad
\bfm=(m_1,\ldots,m_r)\in\mathbb{Z}^r,
\\[6pt]
&M=m_1+\cdots+m_r,\qquad
N=n_1+\cdots+n_r,\qquad
e=\max\{-1,M-N-r+1\},\\[6pt]
& m_{\min} = \min_{1 \leq i \leq r} m_i, \hspace{2cm} n_{\max} = \max_{1 \leq i \leq r} n_i.
\end{split}
\end{equation}
We will further use the abbreviations 
\begin{align*}
&\Gamma(\fraka)=\prod_{i=1}^r\Gamma(\alpha_i),\qquad
(\fraka)_k=\prod_{i=1}^r(\alpha_i)_k,\qquad
(\fraka)_\bfn=\prod_{i=1}^r(\alpha_i)_{n_i},\qquad
\sin(\fraka)=\prod_{i=1}^r\sin(\alpha_i),\\
&\psi(\fraka)=\sum_{i=1}^r\psi(\alpha_i),\quad
\fraka+\gamma=(\alpha_1+\gamma,\ldots,\alpha_r+\gamma),\quad\fraka_{[k]}=(\alpha_1,\ldots,\alpha_{k-1},\alpha_{k+1},\ldots,\alpha_r),
\end{align*}
where  $\gamma$ is a scalar.   Sums and products over an empty parameter vector
are  interpreted as $0$ and $1$, respectively. 
The generalized  hypergeometric series ${}_rF_{r-1}$ and its regularized version ${}_r\phi_{r-1}$ are defined in the standard way, namely
\begin{align}
{}_r\phi_{r-1}\!\left(\!\begin{array}{c}
 \fraka\\ \frakb
\end{array}\middle|z\right)
:=\frac{\Gamma(\fraka)}{\Gamma(\frakb)}
{}_rF_{r-1}\!\left(\!\begin{array}{c}
 \fraka\\ \frakb
\end{array}\middle|z\right)
=\sum_{k=0}^{\infty}
\frac{\Gamma(\fraka+k)z^k}{\Gamma(\frakb+k)k!},
\label{eq:regularized-hypergeometric}
\end{align}
where $\frakb\in\mathbb{C}^{r-1}$. The function ${}_r\phi_{r-1}$ is also well-defined and finite when some components of $\frakb$ are non-positive integers. We will further omit the size indices whenever they can be read off from the sizes of the parameter vectors; thus $F$ and $\phi$ without indices denote the corresponding generalized hypergeometric and regularized hypergeometric functions, respectively. Products, quotients, and shifts involving parameter vectors are understood component-wise. If the argument of the above functions is omitted, it is understood to be equal to $1$:
\[
{}_rF_{r-1}\!\left(\begin{array}{c}\fraka\\ \frakb\end{array}\right)
:=
{}_rF_{r-1}\!\left(\begin{array}{c}\fraka\\ \frakb\end{array}\middle|1\right),
\qquad
{}_r\phi_{r-1}\!\left(\begin{array}{c}\fraka\\ \frakb\end{array}\right)
:=
{}_r\phi_{r-1}\!\left(\begin{array}{c}\fraka\\ \frakb\end{array}\middle|1\right).
\]

In \cite{Karp_Kuznetsov_2021}, Alexey Kuznetsov and the second author derived a duality relation for the generalized hypergeometric series which covers many previously known identities of this type. Given $\fraka\in\mathbb{C}^r$ whose components are distinct modulo integers, $\frakb\in\mathbb{C}^r$ and $|z|<1$, according to \cite[eq.(3)]{Karp_Kuznetsov_2021} we have
\begin{equation}\label{eq:duality-relation}
V_{1}(z)=\frac{1}{(1-z)^{e+1}}\sum_{j=-n_{\max}}^{e-m_{\min}} \Delta_j z^j,    
\end{equation}
with 
\begin{equation}\label{eq:v-s-definition}
V_{s}(z)= \sum_{i=s}^r  \frac{(1-\frakb+\alpha_i)_{\bfm-n_i} z^{-n_i}}{(\alpha_i-\fraka_{[i]})_{\bfn_{[i]}-n_i+1}} F \left(\begin{array}{c}
 \frakb-\alpha_i\\ 1+\fraka_{[i]}-\alpha_i
\end{array}\middle| z \right) F \left(\begin{array}{c}
 1-\frakb+\alpha_i+\bfm-n_i\\ 1-\fraka_{[i]}+\alpha_i+\bfn_{[i]}-n_i
\end{array}\middle| z \right),
\end{equation}
where the coefficients $\Delta_j$ have been computed in \cite[Lemma~6.1]{Cetinkaya_Karp_2021} as follows:
\begin{multline}\label{eq:delta-j-definition}
\Delta_j = \sum_{\ell=\max(-n_{\max},j-e-1)}^j \binom{e+1}{j-\ell}(-1)^{j-\ell}\sum_{i=1}^r \frac{(1-\frakb+\alpha_i)_{\bfm+\ell}}{(\alpha_i-\fraka_{[i]})_{\bfn_{[i]}+\ell+1} (\ell+n_i)!}\\
\times{}_{2r}F_{2r-1} \left(\begin{array}{c}
 -\ell-n_i,\frakb-\alpha_i, \fraka_{[i]}-\alpha_i-\bfn_{[i]}-\ell\\ \frakb-\alpha_i-\bfm-\ell,1+\fraka_{[i]}-\alpha_i
\end{array}\right).
\end{multline}
Note that this formula has been further generalized in a recent preprint \cite{KarpZhang2026}.
Formula \eqref{eq:duality-relation} fails if any two (or more) components of $\fraka$ differ by an integer, as some of the hypergeometric functions on the left-hand side have poles. It is, however, not difficult to see that the expression on the right-hand side remains finite, so that the singularities appearing on the left-hand side must cancel out.  In the theorem below, we compute the explicit form of the degenerate left-hand side leading to an identity expressing a sum of products of hypergeometric and digamma series as a finite sum of products of hypergeometric series and elementary functions.

\begin{theorem}\label{th:duality-degeneration}
Let $r\ge2$, fix $p\in\mathbb{Z}$, and let $\fraka,\frakb\in\mathbb{C}^r$. Assume that $\alpha_2=\alpha_1+p$ and that $\alpha_i-\alpha_j\notin\mathbb Z$ for every pair $\{i,j\}\ne\{1,2\}$. Denote $\cA=1-\fraka_{[1,2]}+\alpha_1+p$ \emph{(}having dimension $r-2$\emph{)} and $\cB=1-\frakb+\alpha_1+p$ \emph{(}having dimension $r$\emph{)} and suppose these two vectors do not contain integer components. For $0<|z|<1$, the following identity holds
\begin{align}\label{eq:duality-degeneration} 
&\phi \left(\begin{array}{c}
 \cB+\bfm-n_2\\1+p+{n}_1-n_2,~ \cA+\bfn_{[1,2]}-n_2
\end{array} \middle| z\right) \sum_{k=0}^\infty [\Psi_1(k)z^p+\Psi_2(k)]z^{k-n_2} \nonumber\\
& + \phi\left(\begin{array}{c}
 1-\cB+p\\ 1+p,~ 2-\cA+p
\end{array} \middle| z\right)\sum_{k=0}^\infty [\Psi_3(k)z^{p+n_1-n_2}-\Psi_4(k)]z^{k-n_1} \nonumber\\
&=\! (-1)^{r+p}\pi^2\frac{\sin (\pi\cA)}{\sin (\pi\cB)}\bigg(\frac{1}{(1-z)^{e+1}}\!\!\sum_{j=-n_{\max}}^{e-m_{\min}} \Delta_j z^j -V_{3}(z) \bigg)+\pi z^{-n_2}\bigg[\sum_{\eta \in \cA} \cot \pi\eta -\sum_{\delta \in \cB} \cot \pi\delta \bigg] \nonumber\\
&\times \phi\left(\begin{array}{c}
 1-\cB+p\\ 1+p,~ 2-\cA+p
\end{array} \middle| z\right) \phi \left(\begin{array}{c}
 \cB+\bfm-n_2\\1+p+{n}_1-n_2,~ \cA+\bfn_{[1,2]}-n_2
\end{array} \middle| z\right),
\end{align}
where $V_{3}(z)$ is defined in \eqref{eq:v-s-definition}, the coefficients $\Delta_j$ are the continuous limits of \eqref{eq:delta-j-definition} as \(\alpha_2\to\alpha_1+p\), and
\begin{align*}
&\Psi_1(k) =\frac{\psi(1+p+k)\Gamma(1-\cB+p+k)}{\Gamma(1+p+k) \Gamma(2-\cA+p+k)k!}, \\
&\Psi_2(k) =\frac{\Gamma(1-\cB+k)}{\Gamma(1-p+k)\Gamma(2-\cA+k)k!} \Big\{\psi(1-p+k) +\psi(2-\cA+k) - \psi(1-\cB+k)\Big\},\\
&\Psi_3(k) =\frac{\Gamma(\cB+\bfm+k-n_2)}{\Gamma(1+p+k+{n}_1-n_2)\Gamma(\cA+k+\bfn_{[1,2]}-n_2)k!} \\
& \times\Big\{\psi(\cB+\bfm+k-n_2)-\psi(1+p+n_1-n_2+k) - \psi(\cA+\bfn_{[1,2]}-n_2+k)\Big\},\\
&\Psi_4(k) = \frac{\psi(1-p+k+{n}_2-n_1)\Gamma(\cB-p+\bfm+k-n_1)}{\Gamma(1-p+k+{n}_2-n_1)\Gamma(\cA-p+k+\bfn_{[1,2]}-n_1)k!} .
\end{align*}
\end{theorem}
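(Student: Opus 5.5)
We will obtain \eqref{eq:duality-degeneration} by letting $\alpha_2=\alpha_1+p+\varepsilon$ in the duality relation \eqref{eq:duality-relation} and computing the limit $\varepsilon\to0$ of its left-hand side $V_1(z)$. The right-hand side $(1-z)^{-e-1}\sum_j\Delta_jz^j$ is a finite combination of terminating series whose coefficients depend continuously on $\alpha_2$. We will check that no singularity occurs in \eqref{eq:delta-j-definition}; there the only dangerous factors are the reciprocals $1/(\alpha_i-\fraka_{[i]})_{\bfn_{[i]}+\ell+1}$. Hence the right-hand side tends to the same expression with the limiting $\Delta_j$. Also $V_1=(\text{term }i=1)+(\text{term }i=2)+V_3(z)$, and the terms with $i\ge3$ of \eqref{eq:v-s-definition} are regular at $\varepsilon=0$ by the non-integrality assumptions. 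They therefore converge to $V_3(z)$ evaluated at $\alpha_2=\alpha_1+p$, and this explains the $-V_3(z)$ on the right of \eqref{eq:duality-degeneration}. Everything therefore reduces to computing the limit of the sum of the $i=1$ and $i=2$ terms of $V_1$, which individually blow up like $1/\varepsilon$.

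For each of these two terms we first rewrite the prefactor and the two $F$'s in terms of the regularized functions $\phi$ from \eqref{eq:regularized-hypergeometric}, using $\Gamma$-ratios. In the $i=1$ term the lower parameter $1+\alpha_2-\alpha_1=1+p+\varepsilon$ appears in the first $F$ and $1-\alpha_2+\alpha_1+n_2-n_1=1-p-\varepsilon+n_2-n_1$ in the second. In the $i=2$ term the roles are swapped. The Pochhammer prefactors $(\alpha_i-\fraka_{[i]})_{\cdots}$ and $(1-\frakb+\alpha_i)_{\bfm-n_i}$ are converted via the reflection formula $\Gamma(x)\Gamma(1-x)=\pi/\sin\pi x$. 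After this conversion each term has the form
\[
\frac{C(\varepsilon)}{\sin\pi\varepsilon}\,\Phi_1(\varepsilon;z)\,\Phi_2(\varepsilon;z),
\]
where $\Phi_1,\Phi_2$ are entire in $\varepsilon$ near $0$. At $\varepsilon=0$ the two terms produce the same product
\[
\phi\!\left(\begin{array}{c}1-\cB+p\\1+p,\ 2-\cA+p\end{array}\middle|z\right)\phi\!\left(\begin{array}{c}\cB+\bfm-n_2\\1+p+n_1-n_2,\ \cA+\bfn_{[1,2]}-n_2\end{array}\middle|z\right),
\]
up to shifting the summation index by $p$ and by $n_1-n_2$. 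This shift uses that the $\phi$ with a lower parameter $1-p+k$ kills the first $p$ terms. The prefactors have opposite signs, so the $1/\varepsilon$ poles cancel, consistent with the finiteness of the right-hand side.

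The finite part is then $\frac{d}{d\varepsilon}$ of the numerators at $\varepsilon=0$, divided by $\pi$, and it splits into three contributions. Differentiating $C(\varepsilon)$, a product of sines and gammas, gives the cotangent sums $\sum_{\eta\in\cA}\cot\pi\eta-\sum_{\delta\in\cB}\cot\pi\delta$ times the product of the two $\phi$'s. It also gives the overall factor $(-1)^{r+p}\pi^2\sin(\pi\cA)/\sin(\pi\cB)$, which we will move to the other side after multiplying through. Differentiating the series termwise in $\varepsilon$ produces $\psi$ of the shifted $\Gamma$-arguments, and collecting them yields exactly $\Psi_1,\dots,\Psi_4$. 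The parameter $\varepsilon$ enters the upper parameters $\cB$-type, the lower $1\pm p$-type, and the $\cA$-type with the appropriate signs. This explains why $\Psi_2$ and $\Psi_3$ carry three digamma terms and $\Psi_1$ and $\Psi_4$ only one: in the latter two series only the parameter $1\pm p$ moves.

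The main obstacle is the bookkeeping in this last step. One must track the signs $(-1)^{p}$ and $(-1)^r$ coming from reflection formulas for the Pochhammer symbols with negative-integer-shifted arguments. One must also justify the index shifts producing the powers $z^{p}$, $z^{p+n_1-n_2}$, $z^{-n_1}$, $z^{-n_2}$. Finally, one must check that the termwise derivative of $1/\Gamma(1-p-\varepsilon+k)$ at the poles $k<p$ is handled correctly: there $\psi/\Gamma$ must be read as the limit $(-1)^{\cdot}(p-k-1)!$. We will handle this by keeping $\Psi_2$, $\Psi_4$ in the form $\psi(x)/\Gamma(x)$ interpreted as its entire continuation. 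Uniform convergence for $|z|<1$ justifies the termwise differentiation.
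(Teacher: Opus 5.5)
Your route is the one the paper takes. With $\alpha_2=\alpha_1+p+\varepsilon$, the $i=1,2$ terms of $V_1$ become $(-1)^{p+1}h_1(\varepsilon)/(\pi\sin\pi\varepsilon)$ and $(-1)^{p}h_2(\varepsilon)/(\pi\sin\pi\varepsilon)$. The index shifts you describe give $h_1(0)=h_2(0)$, and the limit $(-1)^p[h_2'(0)-h_1'(0)]/\pi^2$ produces the cotangent term and $\Psi_1,\dots,\Psi_4$.

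The one step that would fail as stated is your plan to check that no singularity occurs in \eqref{eq:delta-j-definition}. The individual summands there are singular at $\varepsilon=0$:
for $i=1$, the reciprocal $1/(\alpha_1-\alpha_2)_{n_2+\ell+1}=1/(-p-\varepsilon)_{n_2+\ell+1}$ has a simple pole whenever $n_2+\ell\ge p$ (with $\ell+n_1\ge0$); for $p\ge1$, the terminating ${}_{2r}F_{2r-1}$ in the $i=2$ summand has $(1-p-\varepsilon)_j$ in its denominator, which vanishes at $\varepsilon=0$ for $j\ge p$.
Already for $r=2$, $p=0$, $\bfn=\mathbf 0$, $\bfm=(1,1)$ (so $e=1$ and only $\Delta_0$ occurs), the $i=1$ and $i=2$ summands are $-(\cB)_{\bfm}/\varepsilon$ and $(\cB+\varepsilon)_{\bfm}/\varepsilon$. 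So each $\Delta_j$ is finite only through the same $i=1$/$i=2$ cancellation you exploit on the left side.

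No termwise check is needed, however. Once you have shown that $f_1(\varepsilon)+f_2(\varepsilon)$ converges, identity \eqref{eq:duality-split} and the continuity of $V_3$ show that $\sum_j\Delta_j(\varepsilon)z^j$ converges for every $0<|z|<1$. This is a Laurent polynomial with the fixed finite set of exponents $-n_{\max}\le j\le e-m_{\min}$. Evaluating it at that many distinct points and inverting the Vandermonde system shows that each $\Delta_j(\varepsilon)$ converges. That is exactly what the statement means by the continuous limits of $\Delta_j$; the paper itself only asserts this finiteness.

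Alternatively, you could carry out the cancellation inside $\Delta_j$ explicitly, as in the proof of Theorem~\ref{th:finite-digamma-sum}. With this correction your outline is complete.
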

\begin{proof}
Suppose $\alpha_2 =\alpha_1+p+\epsilon$, $p \in \mathbb{Z}_{\geq 0}$. Separating the terms with $i=1$ and $i=2$ in  \eqref{eq:duality-relation}-\eqref{eq:v-s-definition} we can write  
\begin{equation}\label{eq:duality-split}
f_1(\epsilon)+ f_2(\epsilon) 
=\frac{1}{(1-z)^{e+1}}\sum_{j=-n_{\max}}^{e-m_{\min}} \Delta_j z^j - V_{3}(z),
\end{equation}
where after simple transformations using the reflection formula $\Gamma(z)\Gamma(1-z)=\pi/\sin(\pi z)$ and the relation $\sin \pi(z+p) = (-1)^p \sin \pi z$ and in the view of \eqref{eq:regularized-hypergeometric}, we will have 
\begin{align*}
f_1(\epsilon)= \frac{(-1)^{p+1}}{\pi\sin \pi\epsilon} h_1(\epsilon), \quad
f_2(\epsilon)= \frac{(-1)^{p} }{\pi\sin \pi\epsilon}h_2(\epsilon),
\end{align*}
where
\begin{equation}\label{eq:h-functions}
\begin{aligned}
&h_1(\epsilon) = \frac{\sin \pi(\frakb-\alpha_1)z^{-n_1}}{\sin \pi(\alpha_1-\fraka_{[1,2]})} \phi\left(\begin{array}{c}
 \frakb-\alpha_1\\ 1+p+\epsilon,~ 1+\fraka_{[1,2]}-\alpha_1
\end{array} \middle| z\right)\\
& \hspace{3cm} \times\phi \left(\begin{array}{c}
 1-\frakb+\alpha_1+\bfm-n_1\\1-p-\epsilon+{n}_2-n_1,~ 1-\fraka_{[1,2]}+\alpha_1+\bfn_{[1,2]}-n_1
\end{array} \middle| z\right),\\[10pt]
&h_2(\epsilon) = \frac{\sin \pi(\frakb-\alpha_1-p-\epsilon)z^{-n_2}}{\sin \pi(\alpha_1+p+\epsilon-\fraka_{[1,2]})} \phi\left(\begin{array}{c}
 \frakb-\alpha_1-p-\epsilon\\ 1-p-\epsilon,~ 1+\fraka_{[1,2]}-\alpha_1-p-\epsilon
\end{array} \middle| z\right) \\
&\hspace{3cm}\times\phi \left(\begin{array}{c}
 1-\frakb+\alpha_1+p+\epsilon+\bfm-n_2\\1+p+\epsilon+{n}_1-n_2,~ 1-\fraka_{[1,2]}+\alpha_1+p+\epsilon+\bfn_{[1,2]}-n_2
\end{array} \middle| z\right).
\end{aligned}
\end{equation}
Further, employing Taylor expansion for $h_1(\epsilon)$ and $h_2(\epsilon)$, we get
\begin{align}\label{eq:duality-limit}
f_1(\epsilon)+f_2(\epsilon) &= \frac{(-1)^{p} }{\pi\sin \pi\epsilon} [h_2(\epsilon)-h_1(\epsilon)] \notag\\
&= \frac{(-1)^{p}}{ \pi\sin \pi\epsilon} [h_2(0)+\epsilon h'_2(0)-h_1(0)-\epsilon h'_1(0)+O(\epsilon^2)] \notag\\
& = \frac{(-1)^{p} \pi \epsilon}{\pi^2\sin \pi\epsilon} [h'_2(0)-h'_1(0)+O(\epsilon)] \rightarrow \frac{(-1)^{p}}{\pi^2} [h'_2(0)-h'_1(0)] ~~{\rm as}~~ \epsilon \rightarrow 0,
\end{align}
where the last equality follows from the relation $h_1(0)=h_2(0)$, which we will now prove. It is enough to consider \(p\ge0\): for \(p<0\), interchange the indices \(1\) and \(2\) in the underlying duality relation and replace \(p\) by \(-p\). Suppose first that $p>0$, so that $\Gamma(1-p+k)$ has a pole for $k=0,1,\ldots,p-1$. Then, for the first hypergeometric factor in $h_2$ at $\epsilon=0$, we have
\begin{align*}
&\phi\!\left(\!\!\begin{array}{c}
 \frakb-\alpha_1-p\\ 1-p,~ 1+\fraka_{[1,2]}-\alpha_1-p
\end{array} \middle| z\right) =\sum_{k=p}^{\infty} \frac{\Gamma(\frakb-\alpha_1-p+k)z^k}{\Gamma(1-p+k)\Gamma(1+\fraka_{[1,2]}-\alpha_1-p+k)k!}  \nonumber\\
&=z^p\sum_{k=0}^{\infty} \frac{\Gamma(\frakb-\alpha_1+k)z^{k}}{\Gamma(1+p+k)\Gamma(1+\fraka_{[1,2]}-\alpha_1+k)k!} = z^p\phi\left(\begin{array}{c}
 \frakb-\alpha_1\\ 1+p,~ 1+\fraka_{[1,2]}-\alpha_1
\end{array} \middle| z\right),
\end{align*}
which aside from $z^p$ coincides with the first factor in $h_1(0)$.   
In a similar fashion, for the second hypergeometric factor in $h_1$ at $\epsilon=0$,  we obtain 
\begin{multline*}
z^{-n_1}\phi \left(\begin{array}{c}
 1-\frakb+\alpha_1+\bfm-n_1\\1-p+{n}_2-n_1,~ 1-\fraka_{[1,2]}+\alpha_1+\bfn_{[1,2]}-n_1
\end{array} \middle| z\right)
\\=z^{-n_1}\sum_{k=p+n_1-n_2}^{\infty} \frac{\Gamma(1-\frakb+\alpha_1+\bfm-n_1+k)z^k}{\Gamma(1-p+{n}_2-n_1+k)\Gamma(1-\fraka_{[1,2]}+\alpha_1+\bfn_{[1,2]}-n_1+k)k!}
\\=z^p\sum_{k=0}^{\infty} \frac{\Gamma(1-\frakb+\alpha_1+\bfm+k+p-n_2)z^{k-n_2}}{\Gamma(1+k+p+n_1-n_2)\Gamma(1-\fraka_{[1,2]}+\alpha_1+\bfn_{[1,2]}+k+p-n_2)k!}\\
=z^{p-n_2} \phi \left(\begin{array}{c}
 1-\frakb+\alpha_1+p+\bfm-n_2\\1+p+{n}_1-n_2,~ 1-\fraka_{[1,2]}+\alpha_1+p+\bfn_{[1,2]}-n_2
\end{array} \middle| z\right),
\end{multline*}
which aside from a power of $z$ coincides with the second factor in $h_2(0)$. 
The index shift is valid for either sign of \(p+n_1-n_2\): if this integer is negative, the formally added negative-index terms vanish because \(1/\Gamma(k+1)=0\) for negative integers \(k\).
Finally, it is clear that
\begin{align*}
\frac{\sin \pi(\frakb-\alpha_1-p)}{\sin \pi(\alpha_1+p-\fraka_{[1,2]})} = \frac{\sin \pi(\frakb-\alpha_1)}{\sin \pi(\alpha_1-\fraka_{[1,2]})}=
(-1)^r\frac{\sin(\pi\cB)}{\sin(\pi\cA)}.
\end{align*}
Thus, setting $\epsilon=0$ in \eqref{eq:h-functions} we conclude that $h_1(0)=h_2(0)$. Next, to compute $h'_1(0)$, we will use the following relation:
\begin{align}\label{eq:reciprocal-gamma-derivative}
\psi(z) = \frac{\Gamma'(z)}{\Gamma(z)} \Rightarrow \frac{d}{d\epsilon}\frac{1}{\Gamma(z\pm \epsilon)} = \mp \frac{\psi (z\pm \epsilon)}{\Gamma(z\pm \epsilon)}.
\end{align}
It implies that
\begin{align*}
&h'_1(0) = \left[ \frac{\partial}{\partial\epsilon} h_1(\epsilon)  \right]_{\epsilon = 0}=-\frac{\sin \pi(\frakb-\alpha_1)z^{-n_1}}{\sin \pi(\alpha_1-\fraka_{[1,2]})} \bigg[ \sum_{k=0}^\infty \frac{\psi(1+p+k)\Gamma(\frakb-\alpha_1+k)z^k}{\Gamma(1+p+k) \Gamma(1+\fraka_{[1,2]}-\alpha_1+k)k!}  \\
& \times \phi\! \left(\!\!\begin{array}{c}
 1-\frakb+\alpha_1+\bfm-n_1\\1-p+{n}_2-n_1,~ 1-\fraka_{[1,2]}+\alpha_1+\bfn_{[1,2]}-n_1
\end{array} \middle| z\right) - \phi\!\left(\!\!\begin{array}{c}
 \frakb-\alpha_1\\ 1+p,~ 1+\fraka_{[1,2]}-\alpha_1
\end{array}\middle| z \right)  \\
&\times \sum_{k=0}^\infty \frac{\psi(1-p+{n}_2-n_1+k)\Gamma(1-\frakb+\alpha_1+\bfm-n_1+k)z^k}{k!\Gamma(1-p+{n}_2-n_1+k)\Gamma(1-\fraka_{[1,2]}+\alpha_1+\bfn_{[1,2]}-n_1+k)} \bigg].
\end{align*}
To compute $h'_2(0)$, we will use the following differentiation rules:
\begin{align*}
& \frac{\partial}{\partial\epsilon} \frac{\Gamma (\mathfrak{c}+k\pm \epsilon)}{\Gamma (\mathfrak{d}+k\pm \epsilon)} =\pm \frac{\Gamma (\mathfrak{c}+k\pm \epsilon)}{\Gamma (\mathfrak{d}+k\pm \epsilon)} \left[ \sum_{c \in \mathfrak{c}} \psi (c+k\pm \epsilon) - \sum_{d \in \mathfrak{d}} \psi (d+k\pm \epsilon)\right],\\
&\frac{\partial}{\partial\epsilon} \frac{\sin\pi(\mathfrak c-\epsilon)}
     {\sin\pi(\mathfrak d+\epsilon)}
=
-\pi\frac{\sin\pi(\mathfrak c-\epsilon)}
          {\sin\pi(\mathfrak d+\epsilon)}
\left[
 \sum_{c\in\mathfrak c}\cot\pi(c-\epsilon)
 +\sum_{d\in\mathfrak d}\cot\pi(d+\epsilon)
\right].
\end{align*}
Recalling the definitions $\cA=1-\fraka_{[1,2]}+\alpha_1+p$  and $\cB=1-\frakb+\alpha_1+p$, we then arrive at
\begin{align*}
&h'_2(0)\! =\! \left[ \frac{\partial}{\partial\epsilon} h_2(\epsilon)  \right]_{\epsilon=0}\!\!=-\frac{\pi\sin \pi(\frakb-\alpha_1)z^{-n_2}}{\sin \pi(\alpha_1-\fraka_{[1,2]})} \bigg[\bigg\{\sum_{\beta \in \frakb} \cot \pi(\beta- \alpha_1)-\sum_{\alpha \in \fraka_{[1,2]}} \cot \pi(\alpha - \alpha_1) \bigg\}\\
&\times \phi\!\left(\!\!\begin{array}{c}
 1-\cB\\ 1-p,~ 2-\cA
\end{array} \middle| z\right) \phi\! \left(\!\!\begin{array}{c}
\cB+\bfm-n_2\\1+p+{n}_1-n_2,\cA+\bfn_{[1,2]}-n_2
\end{array} \middle| z\right)\\[6pt]
&\hspace{2cm}-\frac{1}{\pi}\phi\left(\!\!\begin{array}{c}
 \cB+\bfm-n_2\\1+p+{n}_1-n_2,\cA+\bfn_{[1,2]}-n_2
\end{array}\middle| z \right) \\
&\times\sum_{k=0}^\infty \frac{\Gamma(1-\cB+k)z^k}{k!\Gamma(1-p+k)\Gamma(2-\cA+k)}\{\psi(1-p+k) +\psi(2-\cA+k) -  \psi(1-\cB+k)\}\\[6pt]
& -\frac{1}{\pi} \phi\left(\!\!\begin{array}{c}
 1-\cB\\ 1-p, 2-\cA
\end{array} \middle| z\right)\sum_{k=0}^\infty \frac{z^k}{k!}\frac{\Gamma(\cB+\bfm-n_2+k)}{\Gamma(1+p+{n}_1-n_2+k)\Gamma(\cA+\bfn_{[1,2]}-n_2+k)}  \\
& \times\big\{\psi(\cB+\bfm-n_2+k)-\psi(1+p+{n}_1-n_2+k)- \psi(\cA+\bfn_{[1,2]}-n_2+k)\big\}\bigg]. 
\end{align*}

Now, formula \eqref{eq:duality-degeneration} is the limit of \eqref{eq:duality-split} as $\epsilon\to0$ computed using \eqref{eq:duality-limit} in view of the above expressions for $h'_2(0)$, $h'_1(0)$ and the elementary relations $\cot \pi(x \pm p)= \cot \pi x$ and $\cot \pi(1-x)=-\cot \pi x$.
\end{proof}

\begin{remark}
It is worth noting that the ratio $\psi(y)/\Gamma(y)$ appearing in the expressions for $\Psi_j(k)$ and in the course of the proof is well defined for negative integer values of  $y$. Indeed, 
\begin{equation*}
\frac{d}{dz} \frac{1}{\Gamma(z)} = \frac{d}{dz} \frac{\sin (\pi z) \Gamma(1-z)}{\pi} = \cos (\pi z)\Gamma(1-z)-\frac{\sin (\pi z) \psi(1-z) \Gamma(1-z)}{\pi} = -\frac{\psi(z)}{\Gamma(z)},
\end{equation*}
so that 
\begin{align}\label{eq:psi-over-gamma-negative}
 \frac{\psi(-n)}{\Gamma(-n)} =   -\cos  (-\pi n)\Gamma(1+n)= (-1)^{n+1} n!.
\end{align}
\end{remark}

If  $m_{\min} > e+n_{\max}$, then the first sum on the right-hand side of \eqref{eq:duality-degeneration} vanishes and we get
\begin{corollary}
Under conditions of Theorem~\ref{th:duality-degeneration} and assuming further that
$m_{\min} > e+n_{\max}$, the following identity holds:
\begin{align}\label{eq:duality-no-delta}
&\phi \left(\begin{array}{c}
 \cB+\bfm-n_2\\1+p+{n}_1-n_2,~ \cA+\bfn_{[1,2]}-n_2
\end{array} \middle| z\right) \sum_{k=0}^\infty [\Psi_1 (k)z^p+\Psi_2(k)]z^{k-n_2} \notag\\
&+ \phi\left(\begin{array}{c}
 1-\cB+p\\ 1+p,~ 2-\cA+p
\end{array}  \middle| z \right)\sum_{k=0}^\infty [\Psi_3(k)z^{p+n_1-n_2}-\Psi_4(k)]z^{k-n_1} \notag\\
&=(-1)^{r+p+1}\pi^2\:\frac{\sin (\pi\cA)}{\sin (\pi\cB)}\:V_3(z)+\pi z^{-n_2}\bigg\{\sum_{\eta \in \cA} \cot \pi\eta -\sum_{\delta \in \cB} \cot \pi\delta \bigg\}  \nonumber\\
&\times \phi\left(\begin{array}{c}
 1-\cB+p\\ 1+p,~ 2-\cA+p
\end{array}\middle| z \right)\phi \left(\begin{array}{c}
 \cB+\bfm-n_2\\1+p+{n}_1-n_2,~ \cA+\bfn_{[1,2]}-n_2
\end{array} \middle| z \right).
\end{align}
\end{corollary}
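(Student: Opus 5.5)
This corollary follows directly from Theorem~\ref{th:duality-degeneration}: we show that the polynomial part $\sum_{j=-n_{\max}}^{e-m_{\min}}\Delta_j z^j$ in \eqref{eq:duality-degeneration} is an empty sum, and then rewrite the sign.

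The summation index runs from $-n_{\max}$ to $e-m_{\min}$. The hypothesis $m_{\min}>e+n_{\max}$ is equivalent to $e-m_{\min}<-n_{\max}$. So the upper limit is strictly below the lower one, the range of summation is empty, and by the paper's convention the sum equals zero. This holds regardless of the values of the limiting coefficients $\Delta_j$. Hence we never need to examine whether the continuous limits of \eqref{eq:delta-j-definition} as $\alpha_2\to\alpha_1+p$ exist. They are simply not present in the identity.

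It also matters how the vanishing is read. In the nondegenerate identity \eqref{eq:duality-relation} the same empty range makes the right-hand side identically zero, so $V_1(z)\equiv0$ for every $\epsilon\neq0$. Consequently \eqref{eq:duality-split} becomes $f_1(\epsilon)+f_2(\epsilon)=-V_3(z)$ before the limit is taken. The limiting argument \eqref{eq:duality-limit} then applies verbatim. The $\Delta$-term drops out at every stage, so its disappearance is not an artifact of interchanging the limit with the sum.

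With that term removed, the first bracket on the right of \eqref{eq:duality-degeneration} reduces to $-V_3(z)$. Combining the minus sign with $(-1)^{r+p}$ gives $(-1)^{r+p+1}\pi^2\sin(\pi\cA)/\sin(\pi\cB)\,V_3(z)$. All remaining terms, namely the $\Psi_j$-series on the left and the cotangent term on the right, are copied unchanged from \eqref{eq:duality-degeneration}, which yields \eqref{eq:duality-no-delta}.

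The only point requiring any care is checking that the hypotheses of Theorem~\ref{th:duality-degeneration} remain in force, and they do trivially, since no new constraint on the parameters is introduced.
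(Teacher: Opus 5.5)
Your proposal is correct and matches the paper's argument: the hypothesis $m_{\min}>e+n_{\max}$ makes the $\Delta_j$-sum in \eqref{eq:duality-degeneration} empty, and absorbing the resulting minus sign in front of $V_3(z)$ into $(-1)^{r+p}$ gives \eqref{eq:duality-no-delta}. Your extra observation is also correct, though the paper does not spell it out: the summation range does not depend on $\epsilon$, so the range is already empty before the limit and $V_1\equiv 0$ for $\epsilon\neq0$.
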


The next corollary is a summation formula for a special combination of digamma series.

\begin{corollary}\label{cr:psi-summation}
Suppose $b_1,b_2$ are real and the integers $p \geq 0$, $m_1$, $m_2$ \emph{(}with $m_1+m_2<0$\emph{)} satisfy the inequality $b_{1}+b_{2}+m_1+m_2-1<p<b_{1}+b_{2}-1$. Then the following summation formula holds
\begin{align*}
&\cG_1(b_{1},b_{2}) \sum_{k=0}^\infty \Lambda_{k}(b_{1},b_{2})+\cG_2(b_{1},b_{2})\sum_{k=0}^\infty \hat{\Lambda}_k(b_{1},b_{2})
\\
&=\frac{(-1)^p\pi^2}{\sin(\pi b_1) \sin(\pi b_2)} \sum_{j=0}^{-m_{\min}-1}\Delta_j -\pi (\cot \pi b_{1}+\cot \pi b_{2}) \cG_1(b_{1},b_{2}) \cG_2(b_{1},b_{2})
\\
&+\cG_2(b_{1},b_{2})\sum_{k=0}^{p-1} \frac{(-1)^{p-k}(p-k-1)!\Gamma(\cB-p+\bfm+k)}{k!}\\
&-\cG_1(b_{1},b_{2})\sum_{k=0}^{p-1} \frac{(-1)^{p-k}(p-k-1)!\Gamma(1-\cB+k)}{k!},
\end{align*}
where, with $\cB=(b_1,b_2)$ as before,
\begin{align*}
&\cG_1(b_{1},b_{2}) = \frac{\Gamma(b_{1}+m_1)\Gamma(b_{2}+m_2)\Gamma(1+p-b_{1}-m_1-b_{2}-m_2)}{\Gamma(1+p-b_{1}-m_1)\Gamma(1+p-b_{2}-m_2)}, \\
& \cG_2(b_{1},b_{2})=\frac{\Gamma(1-b_{1}+p)\Gamma(1-b_{2}+p)\Gamma(b_{1}+b_{2}-p-1)}{\Gamma(b_{1})\Gamma(b_{2})},\\
&\Lambda_k(b_{1},b_{2})= \frac{\Gamma(1-\cB+p+k) \big\{ \psi(1+k) - \psi(1-\cB+p+k)+ \psi(1+p+k) \big\}}{k!\Gamma(1+p+k)},\\
&\hat{\Lambda}_k(b_{1},b_{2})=\frac{\Gamma(\cB+\bfm+k) \big\{ \psi(\cB+\bfm+k)-\psi(1+p+k)-\psi(1+k) \big\}}{k!\Gamma(1+p+k)}.
\end{align*}
\end{corollary}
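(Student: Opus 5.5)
This corollary should be the special case $r=2$ of Theorem~\ref{th:duality-degeneration}, followed by the limit $z\to1^-$. With $r=2$ the vector $\fraka_{[1,2]}$ is empty, so $\cA$ is empty, and $V_3(z)\equiv0$. We take $\bfn=(0,0)$, so $N=0$, $n_{\max}=0$, $M=m_1+m_2<0$, and $e=\max\{-1,M-1\}=-1$. The prefactor $(1-z)^{-(e+1)}$ then equals $1$, and the $\Delta$-sum runs over $j=0,\dots,-1-m_{\min}$. We write $\cB=(b_1,b_2)$ as in the statement, which amounts to choosing $\frakb=1+\alpha_1+p-\cB$. The right-hand side of \eqref{eq:duality-degeneration} then becomes
\[
\frac{(-1)^p\pi^2}{\sin\pi b_1\sin\pi b_2}\sum_j\Delta_j z^j-\pi(\cot\pi b_1+\cot\pi b_2)\,\phi(\cdots)\phi(\cdots).
\]
The two $\phi$-factors are ${}_2\phi_1\bigl(\begin{smallmatrix}1-\cB+p\\1+p\end{smallmatrix}\big|z\bigr)$ and ${}_2\phi_1\bigl(\begin{smallmatrix}\cB+\bfm\\1+p\end{smallmatrix}\big|z\bigr)$.

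Next we let $z\to1^-$. By Gauss's summation, these $\phi$'s tend to $\cG_2$ and $\cG_1$ respectively. Convergence at $z=1$ requires $\Re(c-a-b)>0$, which is exactly $b_1+b_2-1>p$ for the first and $p>b_1+b_2+m_1+m_2-1$ for the second; these are the hypotheses of the corollary. The same inequalities, with an extra logarithmic factor, give absolute convergence at $z=1$ of the digamma series, since $\psi(k)\sim\log k$. Hence Abel's theorem justifies passing to the limit termwise.

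The remaining work is to rewrite the left-hand side. With $n_1=n_2=0$ it reads
\[
\phi_1\sum_k[\Psi_1 z^p+\Psi_2]z^k+\phi_2\sum_k[\Psi_3 z^p-\Psi_4]z^k .
\]
The terms $\Psi_1$ and $\Psi_3$ already have the shape of $\Lambda_k$ and $\hat\Lambda_k$, with parameters $(1-\cB+p+k,1+p+k)$ and $(\cB+\bfm+k,1+p+k)$. The terms $\Psi_2$ and $\Psi_4$ have $\Gamma(1-p+k)$ in the denominator. For $k\ge p$ we shift $k\mapsto k+p$, which turns $\Gamma(1-\cB+k)/(\Gamma(1-p+k)k!)$ into $\Gamma(1-\cB+p+k)/(k!\,\Gamma(1+p+k))$. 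The braces then combine:
\begin{itemize}
\item $\Psi_1$ together with the shifted $\Psi_2$ gives $\psi(1+p+k)+\psi(1+k)-\psi(1-\cB+p+k)$, which is $\Lambda_k$;
\item $\Psi_3$ together with the shifted $-\Psi_4$ gives $\psi(\cB+\bfm+k)-\psi(1+p+k)-\psi(1+k)$, which is $\hat\Lambda_k$.
\end{itemize}
The $\psi(2-\cA+k)$ contributions are absent because $\cA$ is empty. I need to watch the sign conventions here, since $\Psi_2$ carries the combination $\psi(1-p+k)-\psi(1-\cB+k)$ with $\cA$ empty.

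\textbf{Main obstacle.} The delicate step is the leftover terms $k=0,\dots,p-1$ of $\Psi_2$ and $\Psi_4$. There $1-p+k$ is a nonpositive integer, so $1/\Gamma(1-p+k)=0$. However, by \eqref{eq:psi-over-gamma-negative}, $\psi(1-p+k)/\Gamma(1-p+k)=(-1)^{p-k}(p-k-1)!$. So only this $\psi$-piece survives, giving the contributions
\[
\sum_{k<p}(-1)^{p-k}(p-k-1)!\,\Gamma(1-\cB+k)/k!
\]
from $\Psi_2$ and
\[
\sum_{k<p}(-1)^{p-k}(p-k-1)!\,\Gamma(\cB-p+\bfm+k)/k!
\]
from $\Psi_4$. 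Moving these to the right-hand side, multiplied by $\cG_1$ and $\cG_2$ respectively and with the signs flipped, should produce the last two finite sums of the statement. I would check the resulting signs against the claimed formula carefully, and also verify the sign $(-1)^{r+p}=(-1)^p$ and $\sin(\pi\cA)=1$ for the empty product.
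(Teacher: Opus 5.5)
Your proposal is correct and follows the paper's proof. You specialize Theorem~\ref{th:duality-degeneration} to $r=2$, $\bfn=\mathbf{0}$, so that $e=-1$, $V_3=0$ and $\cA$ is empty. You shift the $k\ge p$ part of the $\Psi_2,\Psi_4$ series, evaluate the $k<p$ remnants by \eqref{eq:psi-over-gamma-negative}, and sum the two ${}_2\phi_1(1)$ factors by Gauss to obtain $\cG_1,\cG_2$, and the signs of the two finite sums do come out as stated. The only difference is that you justify passing to $z=1$ by Abel's theorem, using the absolute convergence that the hypotheses guarantee, whereas the paper simply substitutes $z=1$ into an identity stated for $0<|z|<1$; your version is therefore slightly more careful.
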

\begin{proof}
Substituting $z=1$, $\bfn=0$ and $r=2$ in \eqref{eq:duality-degeneration}, we obtain
\begin{align*}
&{}_2\phi_{1}\!\left(\!\!\begin{array}{c}
 \cB+\bfm\\1+p
\end{array}\!\!\right) \sum_{k=0}^\infty \frac{\Gamma(1-\cB+k)}{k!\Gamma(1-p+k)} \big\{\psi(1-p+k)-\psi(1-\cB+k)\big\}\\
& +{}_2\phi_{1}\!\left(\!\!\begin{array}{c}
 1-\cB+p\\ 1+p
\end{array}\!\!\right)\sum_{k=0}^\infty \frac{\Gamma(\cB+\bfm+k)}{k!\Gamma(1+p+k)} \big\{ \psi(\cB+\bfm+k)-\psi(1+p+k) \big\}\\
& - {}_2\phi_{1}\!\left(\!\!\begin{array}{c}
 1-\cB+p\\ 1+p
\end{array}\!\!\right) \sum_{k=0}^\infty \frac{\psi(1-p+k)\Gamma(\cB-p+\bfm+k)}{k!\Gamma(1-p+k)}  \\
&+ {}_2\phi_{1}\!\left(\!\!\begin{array}{c}
 \cB+\bfm\\1+p
\end{array}\!\!\right) \sum_{k=0}^\infty \frac{\psi(1+p+k)\Gamma(1-\cB+p+k)}{\Gamma(1+p+k)k!} \\
&=\frac{(-1)^p\pi^2}{\sin(\pi b_1) \sin(\pi b_2)}\sum_{j=0}^{-m_{\min}-1}\Delta_j-\pi \sum_{\delta \in \cB} \cot(\pi\delta) {}_2\phi_{1}\!\left(\!\!\begin{array}{c}
 1-\cB+p\\ 1+p
\end{array}\!\right) {}_2\phi_{1}\! \left(\!\!\begin{array}{c}
 \cB+\bfm\\1+p
\end{array}\!\!\right).
\end{align*}
Next, we employ \eqref{eq:psi-over-gamma-negative} in the first and third series where the terms $\psi(1-p+k)/\Gamma(1-p+k)$ are encountered, i.e., we replace
\begin{align*}
&\sum_{k=0}^\infty \frac{\psi(1-p+k)\Gamma(\cB-p+\bfm+k)}{k!\Gamma(1-p+k)}=\sum_{k=0}^{p-1} \frac{\psi(1-p+k)\Gamma(\cB-p+\bfm+k)}{k!\Gamma(1-p+k)}\\
&+\sum_{k=p}^\infty \frac{\psi(1-p+k)\Gamma(\cB-p+\bfm+k)}{k!\Gamma(1-p+k)}= \sum_{k=0}^{p-1} \frac{(-1)^{p-k}(p-k-1)!\Gamma(\cB-p+\bfm+k)}{k!}\\
&+\sum_{k=0}^\infty \frac{\psi(1+k)\Gamma(\cB+\bfm+k)}{k!\Gamma(1+p+k)},
\end{align*}
and 
\begin{align*}
&\sum_{k=0}^\infty \frac{\Gamma(1-\cB+k)\big[ \psi(1-p+k) - \psi(1-\cB+k)\big]}{k!\Gamma(1-p+k)} =\sum_{k=0}^{\infty} \frac{\Gamma(1-\cB+k)\psi(1-p+k)}{k!\Gamma(1-p+k)}\\
& -\sum_{k=0}^\infty \frac{\Gamma(1-\cB+k)\psi(1-\cB+k)}{k!\Gamma(1-p+k)}=\sum_{k=0}^{p-1} \frac{(-1)^{p-k}(p-k-1)!\Gamma(1-\cB+k)}{k!}\\
&+\sum_{k=0}^\infty \frac{\psi(1+k)\Gamma(1-\cB+k+p)}{k!\Gamma(1+p+k)}-\sum_{k=0}^\infty\frac{\Gamma(1-\cB+p+k)\psi(1-\cB+p+k)}{k!\Gamma(1+p+k)}.
\end{align*}

Finally, we can sum ${}_2\phi_{1}(1)$ by the Gauss summation theorem when the denominator parameters are positive. If the denominator parameter is zero or negative, it is easy to compute by shifting the summation index so that 
\begin{align*}
{}_2\phi_{1}\!\left(\!\!\begin{array}{c}
 a,b\\ 1-p
\end{array}\!\!\right)\!=\!\frac{\Gamma(a+p)\Gamma(b+p)}{p!} {}_2F_{1}\!\left(\!\!\begin{array}{c}
 a+p,b+p\\ p+1
\end{array}\!\!\right)\!=\!\frac{\Gamma(a+p)\Gamma(b+p)\Gamma(1-a-b-p)}{\Gamma(1-a)\Gamma(1-b)}.
\end{align*}
Substituting these summations into the first relation of the proof, we obtain the desired result.
\end{proof}

The following corollary deals again with the $r=2$ case of \eqref{eq:duality-no-delta}. In this setting, we can convert a special combination of digamma series into a single ${}_2F_1$ series.

\begin{corollary}\label{cr:euler-digamma-identity}
Let $\cB = (b_1,b_2)$. For $|z|<1$, we have the following identity:
\begin{multline}\label{eq:euler-digamma-identity}
 \frac{1}{(1-z)^{b_1+b_2-1}}\sum_{k=0}^\infty \frac{(1-b_1)_k(1-b_2)_k}{(k!)^2}(\psi(1-b_1+k)+\psi(1-b_2+k)-2\psi(1+k))z^k \\
+\sum_{k=0}^\infty \frac{(b_1)_k(b_2)_k}{(k!)^2} (2\psi(1+k)-\psi(b_1+k)-\psi(b_2+k))z^k 
\\=\pi(\cot \pi(b_1)+\cot \pi(b_2))\:{}_2F_{1}\!\left(\!\!\begin{array}{c}
 b_1,b_2\\1
\end{array}\,\middle| z\right).
\end{multline}
\end{corollary}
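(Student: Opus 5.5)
We will not specialize Theorem~\ref{th:duality-degeneration} with $r=2$. Instead we prove \eqref{eq:euler-digamma-identity} directly, by differentiating Euler's transformation
\[
{}_2F_1\!\left(\begin{array}{c} a,b\\ c\end{array}\middle| z\right)=(1-z)^{c-a-b}\,{}_2F_1\!\left(\begin{array}{c} c-a,c-b\\ c\end{array}\middle| z\right),\qquad |z|<1,
\]
with respect to its parameters. The cotangent terms will come only from splitting off constant digamma values and using $\psi(1-x)-\psi(x)=\pi\cot\pi x$.

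\emph{Step 1 (reduction).} Write $D_k(x)=\psi(x+k)-\psi(x)$. In the first sum of \eqref{eq:euler-digamma-identity} we split
\[
\psi(1-b_i+k)=D_k(1-b_i)+\psi(1-b_i),\qquad \psi(1+k)=D_k(1)+\psi(1).
\]
In the second sum we split $\psi(b_i+k)=D_k(b_i)+\psi(b_i)$ and $\psi(1+k)=D_k(1)+\psi(1)$ in the same way. The constant parts produce
\[
\bigl(\psi(1-b_1)+\psi(1-b_2)-2\psi(1)\bigr)(1-z)^{1-b_1-b_2}\,{}_2F_1\!\left(\begin{array}{c} 1-b_1,1-b_2\\ 1\end{array}\middle| z\right)
\]
together with
\[
\bigl(2\psi(1)-\psi(b_1)-\psi(b_2)\bigr)\,{}_2F_1\!\left(\begin{array}{c} b_1,b_2\\ 1\end{array}\middle| z\right).
\]
By Euler's transformation with $c=1$, the two functions ${}_2F_1$ here coincide. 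The $\psi(1)$ terms therefore cancel, and the rest gives exactly $\pi(\cot\pi b_1+\cot\pi b_2)\,{}_2F_1(b_1,b_2;1;z)$ by the reflection formula for $\psi$. It therefore remains to show
\[
(1-z)^{1-b_1-b_2}\sum_{k\ge0}\frac{(1-b_1)_k(1-b_2)_k}{(k!)^2}\bigl[D_k(1-b_1)+D_k(1-b_2)-2D_k(1)\bigr]z^k
=\sum_{k\ge0}\frac{(b_1)_k(b_2)_k}{(k!)^2}\bigl[D_k(b_1)+D_k(b_2)-2D_k(1)\bigr]z^k .
\]

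\emph{Step 2 (differentiation).} We apply the operator $\partial_a+\partial_b+2\partial_c$ to both sides of Euler's transformation. The operator is chosen so that it annihilates the exponent $c-a-b$, so no $\log(1-z)$ terms arise. Using
\[
\partial_x (x)_k=(x)_k D_k(x),\qquad \partial_x\frac{1}{(x)_k}=-\frac{D_k(x)}{(x)_k},
\]
the left-hand side becomes $\sum (a)_k(b)_k/((c)_k k!)\,[D_k(a)+D_k(b)-2D_k(c)]z^k$. On the right-hand side the parameter $c-a$ is hit with total weight $2-1=1$, and likewise $c-b$, while $1/(c)_k$ gets weight $2$. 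This gives $(1-z)^{c-a-b}\sum (c-a)_k(c-b)_k/((c)_k k!)\,[D_k(c-a)+D_k(c-b)-2D_k(c)]z^k$. Setting $a=b_1$, $b=b_2$, $c=1$ yields precisely the identity required at the end of Step 1.

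\emph{Justification and main obstacle.} Termwise differentiation is legitimate because both series converge locally uniformly in the parameters for $|z|<1$. We assume $b_1,b_2\notin\mathbb Z$ so that the cotangents and $\psi(b_i)$, $\psi(1-b_i)$ are finite; otherwise both sides are understood by continuity. The only delicate point is choosing the right combination of parameter derivatives, namely weight $2$ on $c$, which kills the logarithm. The rest is bookkeeping of the constant digamma terms, and we expect no real difficulty. As a consistency check, the same identity should also follow from \eqref{eq:duality-no-delta} with $r=2$, $p=0$, $\bfn=\bfm=0$. There, the sum $\Psi_1+\Psi_2$ pairs with Euler's transformation, and $V_3=0$ because $V_3$ is an empty sum when $r=2$.
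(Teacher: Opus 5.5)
Your argument is correct, but it is not the route the paper takes for this corollary. The paper's proof (in the Appendix) specializes Theorem~\ref{th:duality-degeneration} to $r=2$, $p=0$, $\bfm=\bfn=\mathbf{0}$. It obtains $F_+S_-+F_-S_+=\pi(\cot\pi b_1+\cot\pi b_2)F_+F_-$, where $F_\pm$ and $S_\pm$ are the ${}_2F_1$ and digamma series of the statement. It then divides by $F_-=(1-z)^{b_1+b_2-1}F_+$ and extends across zeros of $F_-$ by analytic continuation.

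Your proof is instead essentially the paper's proof of the generalization \eqref{eq:euler-digamma-generalization}, specialized to $c=1$. Your operator $\partial_a+\partial_b+2\partial_c$ is the sum of the paper's two operators $\partial_{b_1}+\partial_c$ and $\partial_{b_2}+\partial_c$, which it applies separately and then adds. Your Step~1 bookkeeping with $D_k$ corresponds to the paper moving the constants $\psi(b_i)$, $\psi(c-b_i)$ to the right-hand side and then applying $\psi(1-x)-\psi(x)=\pi\cot\pi x$ in the remark following that theorem.

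What each route buys:
\begin{itemize}
\item The differentiation route is self-contained and needs only Euler's transformation. It avoids dividing by $F_-$. Keeping $c$ free gives the more general identity \eqref{eq:euler-digamma-generalization}, and the terminating sums \eqref{eq:euler-coefficient-sum}, at no extra cost.
\item The degeneration route is heavier, but it shows where the identity comes from: the cotangents arise from differentiating the sine quotient as $\alpha_2\to\alpha_1$. It also places the identity in a family (general $r$, $p$, $\bfm$, $\bfn$) that parameter differentiation of Euler's formula does not obviously reach.
\end{itemize}
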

\begin{proof}
See the Appendix.
\end{proof}

The proof of the preceding corollary is placed in the Appendix
because we next prove a generalization by employing a different method.

\begin{theorem}
Let $b_{1},b_{2},c\in\mathbb{C}$ be such that all expressions below are non-singular. Then  for  $|z|<1$ the following holds:
\begin{multline}\label{eq:euler-digamma-generalization}
(1-z)^{\,c-b_{1}-b_{2}}\sum_{k=0}^{\infty}
\frac{(c-b_{1})_k(c-b_{2})_k}{(c)_k\,k!}
\bigl(\psi(c-b_{1}+k)+\psi(c-b_{2}+k)-2\psi(c+k)\bigr)z^k
\\
+\sum_{k=0}^{\infty}
\frac{(b_{1})_k(b_{2})_k}{(c)_k\,k!}
\bigl(2\psi(c+k)-\psi(b_{1}+k)-\psi(b_{2}+k)\bigr)z^k
\\
=
\bigl(\psi(c-b_{1})+\psi(c-b_{2})-\psi(b_{1})-\psi(b_{2})\bigr)
{}_2F_1\!\left(\begin{matrix} b_{1},b_{2}\\ c \end{matrix}\middle| z\right).
\end{multline}
\end{theorem}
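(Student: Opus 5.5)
Differentiate Euler's transformation
\[
{}_2F_1\!\left(\begin{matrix} b_1,b_2\\ c\end{matrix}\middle| z\right)=(1-z)^{c-b_1-b_2}\,{}_2F_1\!\left(\begin{matrix} c-b_1,c-b_2\\ c\end{matrix}\middle| z\right)
\]
with respect to the parameters, as the abstract indicates. Applying $\partial_{b_1}$ alone is awkward, because the factor $(1-z)^{c-b_1-b_2}$ produces a logarithm $\log(1-z)$. The identity to be proved contains no logarithm, so the parameter derivatives must be combined so that the exponent $c-b_1-b_2$ is left unchanged. The natural choice is the operator
\[
D=\partial_{b_1}+\partial_{b_2}+2\,\partial_c ,
\]
which annihilates $c-b_1-b_2$. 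Under $D$ the combinations $c-b_1$, $c-b_2$ and $c$ each have derivative $1$, while $b_1$, $b_2$ have derivative $1$ and $c$ has derivative $2$.

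First apply $D$ termwise to the left-hand series, using $\frac{d}{da}(a)_k=(a)_k[\psi(a+k)-\psi(a)]$ and the reciprocal rule for $1/(c)_k$. This gives
\[
D\,{}_2F_1\!\left(\begin{matrix} b_1,b_2\\ c\end{matrix}\middle| z\right)=\sum_k \frac{(b_1)_k(b_2)_k}{(c)_k k!}\Bigl(\psi(b_1+k)+\psi(b_2+k)-2\psi(c+k)-\psi(b_1)-\psi(b_2)+2\psi(c)\Bigr)z^k .
\]
For the right-hand side, $D$ does not touch the prefactor. Hence
\[
(1-z)^{c-b_1-b_2}\sum_k \frac{(c-b_1)_k(c-b_2)_k}{(c)_k k!}\Bigl(\psi(c-b_1+k)+\psi(c-b_2+k)-2\psi(c+k)-\psi(c-b_1)-\psi(c-b_2)+2\psi(c)\Bigr)z^k .
\]

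Next equate the two expressions. The constant digamma terms multiply full hypergeometric series, and the series on the right is again ${}_2F_1(b_1,b_2;c;z)$ by Euler's transformation. The $2\psi(c)$ contributions cancel on both sides. Moving the $k$-dependent parts to one side and the constant parts to the other gives exactly \eqref{eq:euler-digamma-generalization}, with coefficient $\psi(c-b_1)+\psi(c-b_2)-\psi(b_1)-\psi(b_2)$. Termwise differentiation is justified for $|z|<1$ by local uniform convergence in the parameters, provided we stay away from the singular values excluded in the hypothesis.

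There is no real obstacle beyond choosing the right combination of derivatives; the rest is bookkeeping of signs. As a consistency check, setting $c=1$ should recover Corollary~\ref{cr:euler-digamma-identity}. There the coefficient becomes $\psi(1-b_1)-\psi(b_1)+\psi(1-b_2)-\psi(b_2)$, which equals $\pi(\cot\pi b_1+\cot\pi b_2)$ by the reflection formula for $\psi$.
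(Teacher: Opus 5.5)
Your proof is correct and is essentially the paper's. The paper applies $\partial_{b_1}+\partial_c$ and $\partial_{b_2}+\partial_c$ to Euler's transformation separately (each annihilates $(1-z)^{c-b_1-b_2}$) and adds the two resulting identities; your $D$ is exactly their sum, so you skip the two intermediate identities. One small slip: $D$ applied to $c$ gives $2$, not $1$ as your first clause says, but your termwise computation uses the correct value.
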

\begin{proof}
In this proof, write \(F(a,b;c;z)={}_2F_1(a,b;c;z)\).
We start from Euler's transformation
\begin{equation}\label{eq:euler-transformation}
F\big(b_1,b_2;c;z\big)
=
(1-z)^{c-b_{1}-b_{2}}
F\big(c-b_1,c-b_2;c;z\big).
\end{equation}
A simple computation shows that 
$$
(\partial_{b_{1}}+\partial_c)(1-z)^{c-b_{1}-b_{2}}=(\partial_{b_{2}}+\partial_c)(1-z)^{c-b_{1}-b_{2}}=0,
$$
so that the differential operators $\partial_{b_{1}}+\partial_c$ and $\partial_{b_{2}}+\partial_c$ annihilate the function $(1-z)^{c-b_{1}-b_{2}}$. Therefore, applying $\partial_{b_{1}}+\partial_c$ to \eqref{eq:euler-transformation}, we obtain
\begin{equation}\label{eq:euler-parameter-derivative}
(\partial_{b_{1}}+\partial_c)F(b_{1},b_{2};c;z)
=
(1-z)^{c-b_{1}-b_{2}}
(\partial_{b_{1}}+\partial_c)F(c-b_{1},c-b_{2};c;z).    
\end{equation}
Now, 
differentiate the hypergeometric series $F(b_{1},b_{2};c;z)$  term by term using the following relations
\[
\partial_x (x)_k = (x)_k\bigl(\psi(x+k)-\psi(x)\bigr),~~~~\partial_x [1/(x)_k] = [1/(x)_k]\bigl(\psi(x)-\psi(x+k)\bigr). 
\]
We get
\[
(\partial_{b_{1}}+\partial_c)F(b_{1},b_{2};c;z)
=
\sum_{k=0}^{\infty}\frac{(b_{1})_k(b_{2})_k}{(c)_k\,k!}
\bigl(\psi(b_{1}+k)-\psi(b_{1})-\psi(c+k)+\psi(c)\bigr)z^k,
\]
and 
\begin{multline*}
(\partial_{b_{1}}+\partial_{c})F(\underbrace{c-b_{1}}_{t},\underbrace{c-b_{2}}_{s};c;z)
=\partial_{t}F(t,s;c;z)(-1)+\partial_{t}F(t,s;c;z)+\partial_{s}F(t,s;c;z) \\+\partial_{c}F(t,s;c;z) =\partial_{s}F(t,s;c;z)+\partial_{c}F(t,s;c;z) \\
=
\sum_{k=0}^{\infty}\frac{(c-b_{1})_k(c-b_{2})_k}{(c)_k\,k!}
\bigl(\psi(c-b_{2}+k)-\psi(c-b_{2})-\psi(c+k)+\psi(c)\bigr)z^k.
\end{multline*}
Hence, substituting into \eqref{eq:euler-parameter-derivative}, we get
\begin{multline}\label{eq:euler-derivative-b1}
(1-z)^{c-b_{1}-b_{2}}
\sum_{k=0}^{\infty}\frac{(c-b_{1})_k(c-b_{2})_k}{(c)_k\,k!}
\bigl(\psi(c-b_{2}+k)-\psi(c+k)\bigr)z^k
\\
+\sum_{k=0}^{\infty}\frac{(b_{1})_k(b_{2})_k}{(c)_k\,k!}
\bigl(\psi(c+k)-\psi(b_{1}+k)\bigr)z^k
=
\bigl(\psi(c-b_{2})-\psi(b_{1})\bigr)F(b_{1},b_{2};c;z).
\end{multline}

By symmetry, applying \(\partial_{b_{2}}+\partial_c\) to \eqref{eq:euler-transformation} gives
\begin{multline}\label{eq:euler-derivative-b2}
(1-z)^{c-b_{1}-b_{2}}
\sum_{k=0}^{\infty}\frac{(c-b_{1})_k(c-b_{2})_k}{(c)_k\,k!}
\bigl(\psi(c-b_{1}+k)-\psi(c+k)\bigr)z^k
\\
+\sum_{k=0}^{\infty}\frac{(b_{1})_k(b_{2})_k}{(c)_k\,k!}
\bigl(\psi(c+k)-\psi(b_{2}+k)\bigr)z^k
=
\bigl(\psi(c-b_{1})-\psi(b_{2})\bigr)F(b_{1},b_{2};c;z).
\end{multline}
Adding \eqref{eq:euler-derivative-b1} and \eqref{eq:euler-derivative-b2}, we obtain the claimed formula
\eqref{eq:euler-digamma-generalization}.
\end{proof}

\begin{remark}
For \(c=1\) in \eqref{eq:euler-digamma-generalization}, one has \((c)_k=(1)_k=k!\), and the right-hand side becomes
\[
\psi(1-b_1)-\psi(b_1)+\psi(1-b_2)-\psi(b_2).
\]
Using the reflection formula
\[
\psi(1-x)-\psi(x)=\pi\cot(\pi x),
\]
we recover \eqref{eq:euler-digamma-identity}.
\end{remark}
By equating the coefficients  of  $z^k$ in  \eqref{eq:euler-digamma-generalization}, we arrive at the following summation theorem.

\begin{theorem}
Under the assumptions of the preceding theorem, for each integer $n\ge0$, the following summation formula holds:
\begin{multline}\label{eq:euler-coefficient-sum}
\sum_{k=0}^{n} \frac{(b_{1}+b_{2}-c)_{n-k}(c-b_{1})_k(c-b_{2})_k}{(c)_k (n-k)!k!}(\psi(c-b_{1}+k)+\psi(c-b_{2}+k)-2\psi(c+k))\\
= \frac{(b_{1})_n(b_{2})_n}{(c)_n n!} [\psi(b_{1}+n)+\psi(b_{2}+n)-2\psi(c+n)+\psi(c-b_{1})+\psi(c-b_{2})-\psi(b_{1})-\psi(b_{2}) ].
\end{multline}
\end{theorem}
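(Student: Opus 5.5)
We will extract the coefficient of $z^n$ on both sides of \eqref{eq:euler-digamma-generalization}, as the remark preceding the statement suggests. All three terms are analytic in $|z|<1$, so coefficient comparison is legitimate once each side is written as a power series.

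\textbf{Left-hand side, first term.} Expand the prefactor by the binomial series,
\[
(1-z)^{c-b_1-b_2}=\sum_{j=0}^\infty \frac{(b_1+b_2-c)_j}{j!}z^j .
\]
Form the Cauchy product with the first digamma series. The coefficient of $z^n$ is then
\[
\sum_{k=0}^n \frac{(b_1+b_2-c)_{n-k}(c-b_1)_k(c-b_2)_k}{(c)_k(n-k)!\,k!}\bigl(\psi(c-b_1+k)+\psi(c-b_2+k)-2\psi(c+k)\bigr),
\]
which is exactly the left-hand side of \eqref{eq:euler-coefficient-sum}.

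\textbf{Remaining terms.} The second series on the left contributes the coefficient
\[
\frac{(b_1)_n(b_2)_n}{(c)_n n!}\bigl(2\psi(c+n)-\psi(b_1+n)-\psi(b_2+n)\bigr).
\]
The right-hand side of \eqref{eq:euler-digamma-generalization} contributes
\[
\frac{(b_1)_n(b_2)_n}{(c)_n n!}\bigl(\psi(c-b_1)+\psi(c-b_2)-\psi(b_1)-\psi(b_2)\bigr).
\]

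\textbf{Conclusion.} Equate coefficients and move the second-series contribution to the right. The sign of its bracket flips, giving $\psi(b_1+n)+\psi(b_2+n)-2\psi(c+n)$. This is precisely the right-hand side of \eqref{eq:euler-coefficient-sum}.

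The argument is mechanical; the only step needing care is the sign bookkeeping when transposing the second series. The non-singularity assumptions of the preceding theorem ensure that every Pochhammer symbol and digamma value appearing here is finite.
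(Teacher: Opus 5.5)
Your proposal is correct and follows essentially the same route as the paper. Both expand $(1-z)^{c-b_1-b_2}$ by the binomial series, form the Cauchy product in \eqref{eq:euler-digamma-generalization}, and equate coefficients of $z^n$ with the second series moved to the right-hand side; your sign bookkeeping in that step is right.
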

\begin{proof}
Using the binomial expansion 
\begin{align*}
\frac{1}{(1-z)^{b_{1}+b_{2}-c}} = \sum_{n=0}^{\infty} \frac{(b_{1}+b_{2}-c)_n}{n!} z^n
\end{align*}
and the Cauchy product in \eqref{eq:euler-digamma-generalization}, we get \eqref{eq:euler-coefficient-sum} by equating coefficients at equal powers of $z$.
\end{proof}

\begin{remark}
The summation formula \eqref{eq:euler-coefficient-sum} simplifies when $c=1$ in view of the reflection formula $\psi(1-z)-\psi(z)=\pi \cot (\pi z)$ to the following form:
\begin{multline*}
\sum_{k=0}^{n} \frac{(b_{1}+b_{2}-1)_{n-k}(1-b_{1})_k(1-b_{2})_k}{(n-k)! (k!)^2}(\psi(1-b_{1}+k)+\psi(1-b_{2}+k)-2\psi(1+k))\\
= \frac{(b_{1})_n(b_{2})_n}{(n!)^2} [\psi(1-b_{1}-n)+\psi(1-b_{2}-n)-2\psi(1+n)].
\end{multline*}
\end{remark}

In the following two examples, we take particular values of $b_{1}$ and $b_{2}$. 

\begin{example}
Setting $b_{1}=b_{2}=1/4$ in formula \eqref{eq:euler-digamma-identity} and using   \cite[(74), p.576]{Brychkov_2008}, we obtain a combination of digamma series expressible by the complete elliptic integral of the first kind (denoted by $K$), namely 
\begin{multline*}
\sqrt{1-z} \sum_{k=0}^{\infty} \frac{\left(\frac{3}{4}\right)_k^2}{(k!)^2} 
\left(\!\psi\left(\frac{3}{4}+k\right)-\psi(1+k)\!\right) z^k 
+ \sum_{k=0}^{\infty} \frac{\left(\frac{1}{4}\right)_k^2}{(k!)^2} 
\left(\!\psi(1+k)-\psi\left(\frac{1}{4}+k\right)\!\right) z^k 
\\=\pi {}_2F_1 \left(\frac{1}{4},\frac{1}{4};1;z \right) =2K \left( \sqrt{\frac{1}{2}-\frac{1}{2} (1-z)^{1/2}}\right).
\end{multline*}
Further, using  \cite[(112, p.580)]{Brychkov_2008}, we get  the following summation formula
\begin{multline*}
\frac{3}{2\sqrt{2}} \sum_{k=0}^{\infty} \frac{\left(\frac{3}{4}\right)_k^2}{(k!)^2} 
\left(\!\psi\left(\frac{3}{4}+k\right)-\psi(1+k)\!\right) 
+ \frac{\left(\frac{1}{4}\right)_k^2}{(k!)^2} 
\left(\!\psi(1+k)-\psi\left(\frac{1}{4}+k\right)\!\right)  \left(-\frac{1}{8}\right)^k 
\\=\pi {}_2F_1 \left(\frac{1}{4},\frac{1}{4};1;-\frac{1}{8} \right) =2^{-5/4}\pi^{-1/2} \Gamma^2\left(\frac{1}{4}\right).
\end{multline*}
\end{example}

\begin{example}
With $b_{1}=b_{2}=1/3$ in \eqref{eq:euler-coefficient-sum}, we have
\begin{align*}
\sum_{k=0}^{n} \frac{(-1/3)_{n-k}(2/3)_k^2}{(n-k)!(k!)^2} \left(\psi(2/3+k)-\psi(1+k)\right)
=\frac{(1/3)_n^2}{(n!)^2}\left[\psi(1/3+n)-\psi(1+n)+\frac{\pi}{\sqrt{3}}\right].
\end{align*}
\end{example}

\section{Finite sums involving digamma functions}\label{sec:finite-digamma-sums}
In this section, we retain the notation from the previous section as given in \eqref{eq:integer-vector-notation}.  
One consequence of the proof of the duality relation \eqref{eq:duality-relation} is the following identity stated in \cite[Theorem~6.2]{Cetinkaya_Karp_2021}:
\begin{align}\label{eq:finite-duality-relation}
\sum_{i=1}^r \frac{(1-\frakb+\alpha_i)_{\bfm+k}}{(\alpha_i-\fraka_{[i]})_{\bfn_{[i]}+k+1} (k+n_i)!} {}_{2r}F_{2r-1} \left(\begin{array}{c}
 -k-n_i,\frakb-\alpha_i, \fraka_{[i]}-\alpha_i-\bfn_{[i]}-k\\ \frakb-\alpha_i-\bfm-k, 1+\fraka_{[i]}-\alpha_i
\end{array} \middle| 1\right)=q_e(k),
\end{align}
where all terms with $k+n_i<0$ vanish by convention, $q_{e}\equiv0$ for $e=-1$ and is a polynomial of degree $e$ for $e\ge0$. This polynomial is defined by the recurrence
\begin{align}\label{eq:q-e-recurrence}
q_0(y) = 1, \quad q_e(y)=\frac{1}{e} \sum_{j=1}^{e} \frac{(-1)^{j+1}}{j+1} Q_j(y) q_{e-j}(y),~~e=1,2,\ldots,
\end{align}
where, keeping the notation $\fraka=(\alpha_1,\ldots,\alpha_r)$ and $\frakb=(\beta_1,\ldots,\beta_r)$, the polynomial $Q_j$ is given by
\begin{align*}
Q_j(y)\! =\! \sum_{i=1}^{r}\bigl[\mathbf{B}_{j+1}(-\alpha_i-y)-\mathbf{B}_{j+1}(1-\beta_i-y)+\mathbf{B}_{j+1}(1-\beta_i+m_i)-\mathbf{B}_{j+1}(1-\alpha_i+n_i)\bigr]
\end{align*}
and $\mathbf{B}_{n}(x)$ stands for the standard Bernoulli polynomials \cite[Formula (1)]{Norlund_1961}. 

If one (or more) of the differences $\alpha_{i}-\alpha_{j}$ is an integer, then identity \eqref{eq:finite-duality-relation} fails, but the right-hand side remains well-defined. Below, we explore the degenerate form of the left-hand side when one of such differences is an integer. This leads to an evaluation of certain digamma sums in terms of expressions involving Bernoulli polynomials and finite sums of hypergeometric functions. Before stating our main theorem, it is convenient to introduce the following notation:
\begin{equation}\label{eq:digamma-increment}
\psi(x;j)=\psi(x+j)-\psi(x)=\sum_{k=0}^{j-1}\frac{1}{x+k},~~\psi(\mathbf{x};j)=\sum_{\ell=1}^{d}\psi(x_{\ell};j),   
\end{equation}
where $\mathbf{x}=(x_1,\ldots,x_d)$. At \(x=-n\), this difference has a finite continuous extension when \(0\le j\le n\). Indeed, suppose $n\ge{j}\ge0$ are integers. Then $\psi(-n;j)=\psi(-n+j)-\psi(-n)$ can be defined by continuity in view of the asymptotic formula
$$
\psi(-n+\epsilon)=-\frac{1}{\epsilon}+H_{n}-\gamma+O(\epsilon),~~~\epsilon\to0,
$$
where $H_{n}$ is $n$-th harmonic number.  This formula implies that defining 
\begin{equation}\label{eq:digamma-increment-negative}
\psi(-n;j)=H_{n-j}-H_{n}    
\end{equation}
makes $\psi(x;j)$ continuous at $x=-n$. Expressions of this type appearing in $\Theta_{j}$, $\hat{\Theta}_{j}$ below are to be understood in this way. If $j>n\ge0$, then $\psi(x;j)$ has a simple pole at $x=-n$ with asymptotics
\begin{equation*}
\psi(-n+\epsilon;j)=\psi(j-n)-\psi(-n+\epsilon)=\frac{1}{\epsilon}+H_{j-n-1}-H_{n}+O(\epsilon),~~\epsilon\to0.    
\end{equation*}
This formula implies that for $j>n$ and $i>m$ the product $\psi(-n;j)(-m)_{i}$  can be defined by the limit
\begin{equation}\label{eq:digamma-pochhammer-limit}
\psi(-n+\epsilon;j)(-m+\epsilon)_{i}\to (-1)^{m}m!(i-m-1)!~\text{as}~\epsilon\to0.
\end{equation}

We further remind the reader that each expression of the form $\psi(-n)/\Gamma(-n)$, $n\in\mathbb{N}_{0}$, is understood as $(-1)^{n+1}n!$ according to \eqref{eq:psi-over-gamma-negative}.
Then we have the following
\begin{theorem}\label{th:finite-digamma-sum}
Fix $p \in \mathbb{Z}$ and an integer $r\ge2$. Suppose  $\fraka\in \mathbb{C}^{r}$ satisfies
$\alpha_2=\alpha_1+p$ and $\alpha_i-\alpha_j\notin\mathbb{Z}$ for every pair \(\{i,j\}\ne\{1,2\}\). 
Let $\frakb \in \mathbb{C}^{r}$  and define the parameter vectors 
$$
\cA=1-\fraka_{[1,2]}+\alpha_1+p,~~\cB=1-\frakb+\alpha_1+p.
$$ 
Then for each integer $k \geq -m_{\min}$, the following identity holds:
\begin{align}\label{eq:finite-digamma-sum}
&\cG_{k}\sum_{j\ge0}\cH_j\Theta_j 
-\hat{\cG}_{k} \sum_{j\ge0}\hat{\cH}_j \hat{\Theta}_j =q_e(k) \notag\\[7pt]
&-\sum_{i=3}^r \frac{(1-\frakb+\alpha_i)_{\bfm+k}}{(\alpha_i-\fraka_{[i]})_{\bfn_{[i]}+k+1} (k+n_i)!}\: {}_{2r}F_{2r-1} \!\left(\!\!\begin{array}{c}
 -k-n_i,\frakb-\alpha_i, \fraka_{[i]}-\alpha_i-\bfn_{[i]}-k\\ \frakb-\alpha_i-\bfm-k, 1+\fraka_{[i]}-\alpha_i
\end{array}\!\right)\notag\\[7pt]
&+\cG_{k}\cdot\Psi_k\cdot{}_{2r}F_{2r-1}\!\left(\!\!\begin{array}{c}
 -k-n_2,-p-n_1-k,1-\cB,1-\cA-\bfn_{[1,2]}-k\\ 1-p,1-\cB-\bfm-k,2-\cA
\end{array} \!\right)\notag\\[7pt]
&+\hat{\cG}_{k}\cdot\hat{\Psi}_k\cdot{}_{2r}F_{2r-1}\!\left(\!\!\begin{array}{c}
 -k-n_1,p-n_2-k,1-\cB+p,1-\cA+p-\bfn_{[1,2]}-k\\ 1+p,1-\cB+p-\bfm-k, 2-\cA+p
\end{array} \!\right),
\end{align}
where $q_e(k)$ is defined in \eqref{eq:q-e-recurrence} and 
\begin{align}
&\cG_{k}=\frac{(-1)^{p}\Gamma(\cB+\bfm+k)\Gamma(\cA-1)}{\Gamma(\cB)\Gamma(\cA+\bfn_{[1,2]}+k)\Gamma(1-p)\Gamma(p+{n}_1+k+1)(k+n_2)!}, \notag\\[7pt] 
&\hat{\cG}_{k}=\frac{(-1)^{p}\Gamma(\cB-p+\bfm+k)\Gamma(\cA-p-1)}{\Gamma(\cB-p)\Gamma(\cA-p+\bfn_{[1,2]}+k)\Gamma(1+p)\Gamma(-p+{n}_2+k+1)(k+n_1)!}, \notag\\[7pt]
&\cH_j=\frac{(-k-n_2)_j (-p-n_1-k)_j (1-\cB)_j (1-\cA-\bfn_{[1,2]}-k)_j}{(1-p)_{j}(1-\cB-\bfm-k)_{j}(2-\cA)_{j}  j!},\notag \\[7pt]
&\hat{\cH}_j = \frac{(-k-n_1)_j (p-n_2-k)_j (1-\cB+p)_j (1-\cA+p-\bfn_{[1,2]}-k)_j}{(1+p)_{j}(1-\cB+p-\bfm-k)_{j}(2-\cA+p)_{j}j!}, \notag \\[7pt] 
&\Theta_j=-\psi(1-\cB;j)-\psi(-p-{n}_1-k;j)
-\psi(1-\cA-\bfn_{[1,2]}-k;j)\notag\\
&\quad+\psi(1-\cB-\bfm-k;j)
+\psi(1-p;j)+\psi(2-\cA;j),  \label{eq:theta-j-definition}\\[7pt]
&\hat{\Theta}_j= \psi(p-{n}_2-k;j)-\psi(1+p;j),  \label{eq:hat-theta-j-definition}\\[7pt]
&\Psi_k= \psi(\cB)-\psi(\cB+\bfm+k)+\psi(\cA+\bfn_{[1,2]}+k)-\psi(\cA-1)
\label{eq:psi-k-definition}\\
&\quad+\psi(p+{n}_1+k+1)-\psi(1-p), \notag \\[7pt]
&\hat{\Psi}_k= \psi(-p+{n}_2+k+1)-\psi(1+p). \label{eq:hat-psi-k-definition}
\end{align}
Note that all sums in \eqref{eq:finite-digamma-sum} terminate in view of definitions of $\cH_{j}$, $\hat{\cH}_{j}$ and  the singular products of the type $0\cdot\infty$ are to be  interpreted as common limit as $\alpha_2-\alpha_1\to{p}$, which corresponds to the regularization described above. The terms in the sum $\sum_{i=3}^{r}$ on the right-hand side of \eqref{eq:finite-digamma-sum} with $k+n_i<0$ vanish by convention.
\end{theorem}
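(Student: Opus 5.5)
We follow the degeneration scheme used for Theorem~\ref{th:duality-degeneration}, now applied to the finite identity \eqref{eq:finite-duality-relation}. Put $\alpha_2=\alpha_1+p+\epsilon$ with $\epsilon$ small and nonzero. For such $\epsilon$ all differences $\alpha_i-\alpha_j$ are non-integer, so \eqref{eq:finite-duality-relation} holds. Its right-hand side $q_e(k)$ is a polynomial in the Bernoulli data and depends continuously on $\epsilon$.

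We move the terms $i\ge3$ to the right-hand side; they are regular at $\epsilon=0$. What remains on the left is the sum $g_1(\epsilon)+g_2(\epsilon)$ of the $i=1$ and $i=2$ terms. Each of these has a simple pole at $\epsilon=0$, coming from $(\alpha_1-\alpha_2)_{n_2+k+1}$ in the $i=1$ term and from $(\alpha_2-\alpha_1)_{n_1+k+1}$ in the $i=2$ term.

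The next step is to rewrite each of these terms in a uniform shape. The key tool is the conversion
\[
(\alpha_1-\alpha_2)_{n_2+k+1}=\frac{\Gamma(-p-\epsilon+n_2+k+1)}{\Gamma(-p-\epsilon)},
\]
followed by the reflection formula. This extracts a factor $1/\Gamma(1-p-\epsilon)$, or $1/\sin\pi\epsilon$ up to sign. We do the same for the terminating ${}_{2r}F_{2r-1}$: its $j$-th term is written with Pochhammer symbols whose parameters depend on $\epsilon$. After this, $g_1+g_2=(\pi\sin\pi\epsilon)^{-1}\bigl[\,\tilde h_2(\epsilon)-\tilde h_1(\epsilon)\,\bigr]$, with $\tilde h_1,\tilde h_2$ analytic near $0$.

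In this form the prefactors become $\cG_k$ and $\hat{\cG}_k$ and the series coefficients become $\cH_j$ and $\hat{\cH}_j$, where $\cA$ and $\cB$ absorb the shifts by $p$. The $\epsilon$-derivative of the prefactors produces $\Psi_k$ and $\hat\Psi_k$. The $\epsilon$-derivative of $(x\pm\epsilon)_j$ equals $\pm(x)_j\,\psi(x;j)$, and summed over the series these produce $\Theta_j$ and $\hat\Theta_j$.

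We then show $\tilde h_1(0)=\tilde h_2(0)$ by the same index-shift argument as in the proof of Theorem~\ref{th:duality-degeneration}. For $p>0$, the factors $1/(1-p)_j$ together with the numerator zeros from $(-p-n_1-k)_j$ shift the terminating series. After the shift it coincides term by term with the other one, and the Gamma and sine prefactors match by $\sin\pi(x+p)=(-1)^p\sin\pi x$. The case $p<0$ follows by swapping the indices $1$ and $2$.

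Consequently $\lim_{\epsilon\to0}(g_1+g_2)=\pm\pi^{-2}\bigl(\tilde h_2'(0)-\tilde h_1'(0)\bigr)$. By the product rule this splits into two parts:
\begin{itemize}
\item derivatives of the prefactors, which give $\cG_k\Psi_k\,{}_{2r}F_{2r-1}$ and $\hat{\cG}_k\hat\Psi_k\,{}_{2r}F_{2r-1}$;
\item derivatives of the series terms, which give $\cG_k\sum_j\cH_j\Theta_j$ and $\hat{\cG}_k\sum_j\hat{\cH}_j\hat\Theta_j$.
\end{itemize}
Rearranging yields \eqref{eq:finite-digamma-sum}. The restriction $k\ge-m_{\min}$ makes $1-\cB-\bfm-k$ harmless, in the sense that the denominators $(\frakb-\alpha_i-\bfm-k)_j$ do not vanish before the series terminates.

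The main obstacle is the bookkeeping of the singular $0\cdot\infty$ products. These arise in two places: terms where $1/(1-p)_j$ or $1/\Gamma(1-p)$ is infinite while a numerator Pochhammer vanishes, and terms where a digamma increment $\psi(-n;j)$ has a pole. We plan to handle them by never setting $\epsilon=0$ prematurely. Every such product is kept as an $\epsilon$-dependent analytic quantity, and only the final combination is evaluated, using \eqref{eq:digamma-increment-negative}, \eqref{eq:digamma-pochhammer-limit} and \eqref{eq:psi-over-gamma-negative}. It is exactly this regularization that the statement prescribes, so checking that the $\epsilon$-derivatives match these limits term by term is the delicate step.
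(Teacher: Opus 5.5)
Your proposal is correct and follows essentially the same route as the paper: set $\alpha_2=\alpha_1+p+\epsilon$ in \eqref{eq:finite-duality-relation}, isolate the $i=1,2$ terms, and use the reflection formula to extract $\pi/\sin\pi\epsilon$, leaving regularized analytic factors $h_1,h_2$. Differentiating at $\epsilon=0$, the prefactor derivatives produce $\Psi_k,\hat\Psi_k$ and the termwise derivatives produce $\Theta_j,\hat\Theta_j$. The only small differences are these: $h_1(0)=h_2(0)$ need not be checked by an index shift, since it is forced by the finiteness of the right-hand side as $\epsilon\to0$; and the hypothesis $k\ge-m_{\min}$ is simply inherited from \eqref{eq:finite-duality-relation}, rather than being a non-vanishing condition on the denominators $(\frakb-\alpha_i-\bfm-k)_j$.
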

\begin{proof}

Rewrite  \eqref{eq:finite-duality-relation} as follows: 
\begin{align}\label{eq:finite-duality-split}
&\frac{(1-\frakb+\alpha_1)_{\bfm+k}}{(\alpha_1-\fraka_{[1]})_{\bfn_{[1]}+k+1} (k+n_1)!} {}_{2r}F_{2r-1} \left(\begin{array}{c}
 -k-n_1,\frakb-\alpha_1, \fraka_{[1]}-\alpha_1-\bfn_{[1]}-k\\ \frakb-\alpha_1-\bfm-k, 1+\fraka_{[1]}-\alpha_1
\end{array} \right) + \notag\\
&\frac{(1-\frakb+\alpha_2)_{\bfm+k}}{(\alpha_2-\fraka_{[2]})_{\bfn_{[2]}+k+1} (k+n_2)!} {}_{2r}F_{2r-1} \left(\begin{array}{c}
 -k-n_2,\frakb-\alpha_2, \fraka_{[2]}-\alpha_2-\bfn_{[2]}-k\\ \frakb-\alpha_2-\bfm-k, 1+\fraka_{[2]}-\alpha_2
\end{array} \right)=q_e(k) \notag\\
&-\sum_{i=3}^r \frac{(1-\frakb+\alpha_i)_{\bfm+k}}{(\alpha_i-\fraka_{[i]})_{\bfn_{[i]}+k+1} (k+n_i)!} {}_{2r}F_{2r-1} \left(\begin{array}{c}
 -k-n_i,\frakb-\alpha_i, \fraka_{[i]}-\alpha_i-\bfn_{[i]}-k\\ \frakb-\alpha_i-\bfm-k, 1+\fraka_{[i]}-\alpha_i
\end{array} \right).
\end{align}
It is valid for $\fraka, \frakb \in \mathbb{C}^{r}$ whenever $\alpha_i - \alpha_j \notin \mathbb{Z}$. As above, it is enough to treat $p\ge0$, since the case $p<0$ follows by interchanging the indices $1$ and $2$. Put  $\alpha_2 =\alpha_1+p+\epsilon$. The left-hand side of \eqref{eq:finite-duality-split} then takes the form $f_1(\epsilon)+f_2(\epsilon)$, where
\begin{align*}
&f_1(\epsilon)=g_1(\epsilon){}_{2r}F_{2r-1}\!\left(\!\!\begin{array}{c}
 -k-n_1,\frakb-\alpha_1,p+\epsilon-n_2-k,\fraka_{[1,2]}-\alpha_1-\bfn_{[1,2]}-k\\ \frakb-\alpha_1-\bfm-k,1+p+\epsilon,~ 1+\fraka_{[1,2]}-\alpha_1
\end{array}\!\right), \\
&f_2(\epsilon)=g_2(\epsilon){}_{2r}F_{2r-1}\!\left(\!\!\begin{array}{c}
 -k-n_2,1-\cB-\epsilon,-p-\epsilon-n_1-k,1-\cA-\epsilon-\bfn_{[1,2]}-k\\ 1-\cB-\epsilon-\bfm-k,1-p-\epsilon,~ 2-\cA-\epsilon
\end{array}\!\right),
\end{align*} 
with
\begin{align*}
&g_1(\epsilon) = \frac{(1-\frakb+\alpha_1)_{\bfm+k}}{(-p-\epsilon)_{{n}_2+k+1} (\alpha_1-\fraka_{[1,2]})_{\bfn_{[1,2]}+k+1}(k+n_1)!}\\
&=\!\frac{\Gamma(1-\frakb+\alpha_1+\bfm+k)\Gamma(-p-\epsilon)\Gamma(\alpha_1-\fraka_{[1,2]})}{\Gamma(1-\frakb+\alpha_1)\Gamma(1-p-\epsilon+{n}_2+k)\Gamma(\alpha_1-\fraka_{[1,2]}+\bfn_{[1,2]}+k+1)\Gamma(k+n_1+1)} ,\\[7pt]
& g_2(\epsilon) = \frac{(1-\frakb+\alpha_2)_{\bfm+k}}{(p+\epsilon)_{{n}_1+k+1} (\alpha_2-\fraka_{[1,2]})_{\bfn_{[1,2]}+k+1}(k+n_2)!}  \\
&=\frac{\Gamma(\cB+\epsilon+\bfm+k)\Gamma(p+\epsilon)\Gamma(\cA-1+\epsilon)}{\Gamma(\cB+\epsilon)\Gamma(p+\epsilon+{n}_1+k+1)\Gamma(\cA+\epsilon+\bfn_{[1,2]}+k)\Gamma(k+n_2+1)}.
\end{align*}
Using the reflection formula $\Gamma(z)\Gamma(1-z)=\pi/\sin(\pi z)$ and the relation $\sin[\pi(z+p)] = (-1)^p \sin(\pi z)$ in the view of \eqref{eq:regularized-hypergeometric} we will have 
\begin{align*}
f_1(\epsilon)= \frac{(-1)^{p+1} \pi}{\sin(\pi\epsilon)} h_1(\epsilon), \quad
f_2(\epsilon)= \frac{(-1)^{p} \pi}{ \sin(\pi\epsilon)}h_2(\epsilon),
\end{align*}
where
\begin{align*}
&h_1(\epsilon)=\frac{\Gamma(1-\frakb+\alpha_1+\bfm+k)\Gamma(\alpha_1-\fraka_{[1,2]})}{\Gamma(1-\frakb+\alpha_1)\Gamma(-p-\epsilon+{n}_2+k+1)\Gamma(\alpha_1-\fraka_{[1,2]}+\bfn_{[1,2]}+k+1)(k+n_1)!}\\
&\qquad\times\frac{1}{\Gamma(1+p+\epsilon)}{}_{2r}F_{2r-1}\!\left(\!\!\begin{array}{c}
 -k-n_1,\frakb-\alpha_1,p+\epsilon-n_2-k,\fraka_{[1,2]}-\alpha_1-\bfn_{[1,2]}-k\\ \frakb-\alpha_1-\bfm-k,1+p+\epsilon,~ 1+\fraka_{[1,2]}-\alpha_1
\end{array}\!\right), \\
&h_2(\epsilon)=\frac{\Gamma(\cB+\epsilon+\bfm+k)\Gamma(\cA-1+\epsilon)}{\Gamma(\cB+\epsilon)\Gamma(p+\epsilon+{n}_1+k+1)\Gamma(\cA-1+\epsilon+\bfn_{[1,2]}+k+1)\Gamma(1-p-\epsilon)(k+n_2)!}\\
&\qquad\times{}_{2r}F_{2r-1}\!\left(\!\!\begin{array}{c}
 -k-n_2,1-\cB-\epsilon,-p-\epsilon-n_1-k,1-\cA-\epsilon-\bfn_{[1,2]}-k\\ 1-\cB-\epsilon-\bfm-k,1-p-\epsilon,~ 2-\cA-\epsilon
\end{array}\!\right).
\end{align*} 

Employing Taylor expansion for $h_1(\epsilon)$ and $h_2(\epsilon)$ as in \eqref{eq:duality-limit}, we next compute $h'_1(0)$ using the relation \eqref{eq:reciprocal-gamma-derivative}. This yields
\begin{align*}
&h'_1(0) = \frac{\Gamma(1-\frakb+\alpha_1+\bfm+k)\Gamma(\alpha_1-\fraka_{[1,2]})}{\Gamma(1+p)\Gamma(1-\frakb+\alpha_1)\Gamma(1-p+{n}_2+k)\Gamma(\alpha_1-\fraka_{[1,2]}+\bfn_{[1,2]}+k+1)(k+n_1)!} \\
&\bigg[(\psi(1-p+{n}_2+k)-\psi(1+p)){}_{2r}F_{2r-1}\!\left(\!\!\begin{array}{c}
 -\!k-\!n_1,\frakb\!-\!\alpha_1,p\!-\!n_2\!-\!k,\fraka_{[1,2]}\!-\!\alpha_1\!-\!\bfn_{[1,2]}\!-\!k\\ \frakb-\alpha_1-\bfm-k,1+p,~ 1+\fraka_{[1,2]}-\alpha_1
\end{array}\!\!\right)\\
&+\sum_{j=0}^{k+n_1}\frac{(-k-n_1)_j (\frakb-\alpha_1)_j (p-n_2-k)_j (\fraka_{[1,2]}-\alpha_1-\bfn_{[1,2]}-k)_j \hat{\Theta}_{j}}{(\frakb-\alpha_1-\bfm-k)_j (1+p)_j (1+\fraka_{[1,2]}-\alpha_1)_j j!}\bigg],
\end{align*} 
where $\hat{\Theta}_{j}$ is given by \eqref{eq:hat-theta-j-definition}. In a similar fashion for $h_2'(0)$ we obtain 
\begin{align*}
&h'_2(0)= \frac{\Gamma(\cB+\bfm+k)\Gamma(\cA-1)}{\Gamma(1-p)\Gamma(\cB)\Gamma(p+{n}_1+k+1)\Gamma(\cA+\bfn_{[1,2]}+k)(k+n_2)!}\\
&\times\!\bigg[\big(\psi(\cB+\bfm+k)-\psi(\cB)+\psi(\cA-1)-\psi(\cA+\bfn_{[1,2]}+k)-\psi(p+{n}_1+k+1) \\
& +\psi(1-p)\big){}_{2r}F_{2r-1}\!\left(\!\!\begin{array}{c}
 -k-n_2,\frakb-\alpha_1-p,-p-n_1-k,\fraka_{[1,2]}-\alpha_1-p-\bfn_{[1,2]}-k\\ \frakb-\alpha_1-p-\bfm-k,1-p,~ 1+\fraka_{[1,2]}-\alpha_1-p
\end{array}\!\right)
\\
&+\sum_{j\ge0}\frac{(-k-n_2)_j (\frakb-\alpha_1-p)_j (-p-n_1-k)_j (\fraka_{[1,2]}-\alpha_1-p-\bfn_{[1,2]}-k)_j \Theta_{j}}{(\frakb-\alpha_1-p-\bfm-k)_j (1-p)_j (1+\fraka_{[1,2]}-\alpha_1-p)_j j!}\bigg],
\end{align*} 
where $\Theta_{j}$ is given by \eqref{eq:theta-j-definition}. Finally, we employ a calculation similar to \eqref{eq:duality-limit}, replacing  $h'_1(0)$ and $h'_2(0)$ by the above expressions. Substituting everything back into \eqref{eq:finite-duality-split} and taking the limit as $\epsilon\to0$ yields \eqref{eq:finite-digamma-sum}.
\end{proof}

\begin{remark}
Care is needed when differentiating the factors $(p+\epsilon-n_2-k)_j$ and $(-p-\epsilon-n_1-k)_j$ at $\epsilon=0$. The Pochhammer symbol itself is a polynomial and hence has no singularity. For any $x$,
\[
\left.\frac{d}{d\epsilon}(x+\epsilon)_j\right|_{\epsilon=0}
=\sum_{m=0}^{j-1}\prod_{\substack{i=0\\i\ne m}}^{j-1}(x+i).
\]
If \(x\notin\{0,-1,\ldots,-j+1\}\), this equals
\((x)_j[\psi(x+j)-\psi(x)]\). If \(x=-m\) with \(0\le m\le j-1\), its value is instead
\[
(-1)^m m!(j-m-1)!.
\]
These formulas apply to both Pochhammer factors above and avoid division by a vanishing factor.
\end{remark}

Specializing the parameters gives the following more concise
summation formula.
\begin{corollary}
Suppose  $\fraka\in \mathbb{C}^{r}$ satisfies $\alpha_2=\alpha_1$ and $\alpha_i-\alpha_j\notin\mathbb{Z}$ for every pair $\{i,j\}\ne\{1,2\}$, $\bfn\in\mathbb{Z}^r$ satisfies $n_1=n_2=n$, and $\frakb \in \mathbb{C}^{r}$. Further, denote $\cA=1-\fraka_{[1,2]}+\alpha_1$ and  $\cB=1-\frakb+\alpha_1$.
Then for each integer $k$ such that $k+n\ge0$, the following identity holds 
\begin{align}\label{eq:equal-shift-summation}
&\frac{\Gamma(\cB+\bfm+k)\Gamma(\cA-1)}{\Gamma(\cB)\Gamma(\cA+\bfn_{[1,2]}+k)[(k+n)!]^2}\sum_{j=0}^{k+n}\frac{[(-k-n)_j]^2(1-\cB)_j (1-\cA-\bfn_{[1,2]}-k)_j}{(1-\cB-\bfm-k)_{j}(2-\cA)_{j}(j!)^2} E_j
\notag\\
&=q_{e}(k)-\sum_{i=3}^r \frac{(1-\frakb+\alpha_i)_{\bfm+k}}{(\alpha_i-\fraka_{[i]})_{\bfn_{[i]}+k+1} (k+n_i)!}\: {}_{2r}F_{2r-1} \!\left(\!\!\begin{array}{c}
 -k-n_i,\frakb-\alpha_i, \fraka_{[i]}-\alpha_i-\bfn_{[i]}-k\\ \frakb-\alpha_i-\bfm-k, 1+\fraka_{[i]}-\alpha_i
\end{array}\!\right),
\end{align}
where
\begin{align}\label{eq:e-j-definition}
E_j=
& \psi(1\!-\!\cB\!-\!\bfm\!-\!k\!+\!j)-\psi(1
 \!-\!\cB\!+\!j)+\psi(2\!-\!\cA\!+\!j)
 -\psi(1\!-\!\cA\!-\!\bfn_{[1,2]}\!-\!k\!+\!j)\notag\\
 &+2H_j-2H_{k+n-j}.
\end{align}
\end{corollary}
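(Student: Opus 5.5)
The plan is to specialize Theorem~\ref{th:finite-digamma-sum} to $p=0$ and $n_1=n_2=n$, and then simplify. With $p=0$ the two hypergeometric sums on the left of \eqref{eq:finite-digamma-sum} have the same parameters. Indeed, $\cH_j=\hat\cH_j$, since both reduce to
\[
\frac{[(-k-n)_j]^2(1-\cB)_j(1-\cA-\bfn_{[1,2]}-k)_j}{(1-\bfm-\cB-k)_j(2-\cA)_j(j!)^2}.
\]
Likewise $\Psi_k$ and $\hat\Psi_k$ no longer depend on $p$ singularly, because $\psi(1-p)=\psi(1+p)=\psi(1)$. We also get $\cG_k=\hat\cG_k$, both equal to
\[
\frac{\Gamma(\cB+\bfm+k)\Gamma(\cA-1)}{\Gamma(\cB)\Gamma(\cA+\bfn_{[1,2]}+k)[(k+n)!]^2}.
\]
So the left side becomes $\cG_k\sum_j\cH_j(\Theta_j-\hat\Theta_j)$.

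The last two terms on the right of \eqref{eq:finite-digamma-sum} share the same $_{2r}F_{2r-1}$ and the same prefactor $\cG_k$. They therefore combine into
\[
\cG_k(\Psi_k+\hat\Psi_k)\,{}_{2r}F_{2r-1}=\cG_k(\Psi_k+\hat\Psi_k)\sum_j\cH_j.
\]
The whole identity then reads
\[
\cG_k\sum_j\cH_j\bigl(\Theta_j-\hat\Theta_j-\Psi_k-\hat\Psi_k\bigr)=q_e(k)-\sum_{i\ge3}(\cdots),
\]
and what remains is to show that $\Theta_j-\hat\Theta_j-\Psi_k-\hat\Psi_k=E_j$ at $p=0$.

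For the non-singular parameters, $\psi(x;j)=\psi(x+j)-\psi(x)$ gives a direct telescoping:
\[
-\psi(1-\cB;j)+\psi(1-\cB-\bfm-k;j)+\psi(2-\cA;j)-\psi(1-\cA-\bfn_{[1,2]}-k;j)
\]
combines with the constant terms $\psi(\cB)-\psi(\cB+\bfm+k)+\psi(\cA+\bfn_{[1,2]}+k)-\psi(\cA-1)$ of $\Psi_k$. Here I will use the reflection formula $\psi(1-x)-\psi(x)=\pi\cot\pi x$. The $\cot$ contributions cancel in pairs, because $\cB$ and $\cB+\bfm+k$ differ by integers, and similarly $\cA-1$ and $\cA+\bfn_{[1,2]}+k$. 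This leaves exactly the four $\psi(\cdot+j)$ terms of $E_j$.

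The integer-parameter terms are the delicate part, and I expect them to be the main obstacle. At $p=0$ these are $-\psi(-n-k;j)$ from $\Theta_j$, $-\psi(-n-k;j)$ from $-\hat\Theta_j$, and $\psi(1;j)-\psi(1;j)=0$, together with $-\psi(n+k+1)+\psi(1)$ from each of $\Psi_k$ and $\hat\Psi_k$. By \eqref{eq:digamma-increment-negative}, for $0\le j\le k+n$ we have $\psi(-n-k;j)=H_{k+n-j}-H_{k+n}$. Each pair therefore contributes $-(H_{k+n-j}-H_{k+n})-H_{k+n}=-H_{k+n-j}$, up to a constant that must be checked to give $+H_j$ after accounting for the sign convention in $\Psi_k$.

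Here I must be careful. The regularization in Theorem~\ref{th:finite-digamma-sum} treats $\alpha_2-\alpha_1\to p$ jointly, and $\psi(1-p-\epsilon;j)$ with $p=0$ equals $H_j$, while $\psi(1+p;j)=H_j$ as well. So I would redo the $\epsilon$-derivative directly for $p=0$ rather than trust the formal specialization. Differentiating $(-\epsilon-n-k)_j/(1-\epsilon)_j$ and $(\epsilon-n-k)_j/(1+\epsilon)_j$ gives contributions $\pm(H_{k+n-j}-H_{k+n})\mp H_j$ with opposite signs. Adding these to the derivatives of $1/\Gamma(1\mp\epsilon)$ and $1/\Gamma(n+k+1\pm\epsilon)$ in $h_1,h_2$ should yield $2H_j-2H_{k+n-j}$.

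Since $j\le k+n$ always holds because $\cH_j$ terminates, the extension \eqref{eq:digamma-pochhammer-limit} is never needed. Confirming the final signs against $E_j$, ideally numerically as a cross-check for a small case such as $r=2$, is the last step.
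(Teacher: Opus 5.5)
Your route is the paper's: specialize Theorem~\ref{th:finite-digamma-sum} to $p=0$, $n_1=n_2=n$, use $\cG_k=\hat\cG_k$ and $\cH_j=\hat\cH_j$, absorb the two ${}_{2r}F_{2r-1}$ terms on the right into the left-hand sum, and reduce everything to the pointwise identity $\Theta_j-\hat\Theta_j-\Psi_k-\hat\Psi_k=E_j$. Your treatment of the non-integer parameters is correct. It is also more explicit than the paper, which writes the $j$-independent part directly as $\psi(\cB)-\psi(\cB+\bfm+k)+\cdots$ and leaves the reflection step $\psi(1-x)-\psi(x)=\pi\cot\pi x$ implicit.

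The integer-parameter bookkeeping, however, has a sign slip, and this is the only place where anything actually has to be computed. By \eqref{eq:theta-j-definition}, $\Theta_j$ contains $+\psi(1-p;j)$. By \eqref{eq:hat-theta-j-definition}, $\hat\Theta_j$ contains $-\psi(1+p;j)$. So in $\Theta_j-\hat\Theta_j$ these two terms add to $+2\psi(1;j)=2H_j$; they do not cancel as you wrote.

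Once this is corrected, your own count finishes the proof. By \eqref{eq:digamma-increment-negative}, $-2\psi(-n-k;j)-2H_{k+n}=-2H_{k+n-j}$. Adding $2H_j$ gives exactly $2H_j-2H_{k+n-j}$. Your remark that some ``constant'' must supply $+H_j$ cannot be right: $H_j$ depends on $j$, so it can only come from the $\psi(1;j)$ increments.

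Consequently there is no reason to distrust the formal specialization. At $p=0$ every quantity in Theorem~\ref{th:finite-digamma-sum} is finite: $\Gamma(1-p)=1$, $(1\pm p)_j=j!$, and $\psi(-n-k;j)$ is only needed for $0\le j\le k+n$. So the proposed re-derivation of the $\epsilon$-limit is unnecessary; it would only reproduce $\Theta_j$ and $\hat\Theta_j$. Your sketch of that re-derivation, with contributions $\pm(H_{k+n-j}-H_{k+n})\mp H_j$, is in fact consistent with the corrected count.

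As submitted, though, the proposal ends with ``should yield''. The decisive identity $\Theta_j-\hat\Theta_j-\Psi_k-\hat\Psi_k=E_j$ is therefore asserted rather than verified.
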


\begin{proof}
Taking $p=0$, $n_1=n_2$ in Theorem~\ref{th:finite-digamma-sum},
we see that
\begin{align*}
&G_k:=\cG_k=\hat{\cG}_{k}=\frac{\Gamma(\cB+\bfm+k)\Gamma(\cA-1)}{\Gamma(\cB)\Gamma(\cA+\bfn_{[1,2]}+k)[(k+n)!]^2}, \notag\\[7pt]
&H_j:=\cH_j=\hat{\cH}_{j}=\frac{[(-k-n)_j]^2(1-\cB)_j (1-\cA-\bfn_{[1,2]}-k)_j}{(1-\cB-\bfm-k)_{j}(2-\cA)_{j}(j!)^2}.
\end{align*}
Hence, the left-hand side of \eqref{eq:finite-digamma-sum} takes the form $G_k\sum_{j\ge0}H_j(\Theta_j-\hat{\Theta}_{j})$. Separating the part independent of $j$ we compute the difference $\Theta_j-\hat{\Theta}_{j}$ by an application of \eqref{eq:digamma-increment} and \eqref{eq:digamma-increment-negative} as follows:
\begin{equation*} 
\Theta_j-\hat\Theta_j=S_k+E_j,
\end{equation*}
where $E_{j}$ is defined in \eqref{eq:e-j-definition}, and the $j$-independent part is
\begin{equation*}
S_k={}
 \psi(\cB)-\psi(\cB+\bfm+k)
 +\psi(\cA+\bfn_{[1,2]}+k)-\psi(\cA-1)
 +2H_{k+n} =\Psi_{k}+\hat{\Psi}_k.
 \end{equation*}
The ultimate equality follows from the definitions \eqref{eq:psi-k-definition},\eqref{eq:hat-psi-k-definition}.
In view of these relations, the left-hand side of formula \eqref{eq:finite-digamma-sum} takes the form 
$$
G_k\sum_{j=0}^{k+n}H_j(\Theta_j-\hat\Theta_j)
=G_k\sum_{j=0}^{k+n}H_j(S_k+E_j)  =G_k(\Psi_{k}+\hat{\Psi}_k)\Phi_k+G_k\sum_{j=0}^{k+n}H_jE_j,
$$
where 
$$
 \Phi_k:=
 {}_{2r}F_{2r-1}\!\left(
 \begin{matrix}
 -k-n,-k-n,1-\cB,
 1-\cA-\bfn_{[1,2]}-k\\
 1,1-\cB-\bfm-k,2-\cA
 \end{matrix}
 \right)
 =\sum_{j=0}^{k+n}H_j.
$$
This shows that the expression $G_k(\Psi_{k}+\hat{\Psi}_k)\Phi_k$ also appearing on the right-hand side of \eqref{eq:finite-digamma-sum} cancels and we are left with \eqref{eq:equal-shift-summation}.
\end{proof}

\begin{corollary}
Under the assumptions of the preceding corollary, let $r=3$, so that $\alpha:=\cA = 1-\alpha_3+\alpha_1$ is a scalar. Then the following identity holds:
\begin{align*}
&\frac{\Gamma(\cB+\bfm+k)\Gamma(\alpha-1)}{\Gamma(\cB)\Gamma(\alpha+n_3+k)[(k+n)!]^2}\sum_{j=0}^{k+n}\frac{[(-k-n)_j]^2(1-\cB)_j (1-\alpha-n_3-k)_j}{(1-\cB-\bfm-k)_{j}(2-\alpha)_{j}(j!)^2} E_j
\notag\\
&=q_{e}(k)- \frac{(\cB-\alpha+1)_{\bfm+k}}{[(1-\alpha)_{n+k+1} ]^2(k+n_3)!}{}_{6}F_{5} \!\left(\!\!\begin{array}{c}
 -k-n_3,\alpha-\cB, \alpha-n-k-1, \alpha-n-k-1\\ \alpha-\cB-\bfm-k, \alpha, \alpha
\end{array}\!\right)
\end{align*}
where
\begin{align*}
E_j=
& \psi(1\!-\!\cB\!-\!\bfm\!-\!k\!+\!j)-\psi(1
 \!-\!\cB\!+\!j)+\psi(2\!-\!\alpha\!+\!j)
 -\psi(1\!-\!\alpha\!-\!n_3\!-\!k\!+\!j)\notag\\
 &+2H_j-2H_{k+n-j}.
\end{align*}
Further simplifications occur if \emph{(1)} $M-N = 1$, then $e=-1$ and $q_{e}(k)=0$; \emph{(2)} if $M-N = 2$, then $e=0$ and $q_{e}(k)=1$;  \emph{(3)} if $M-N = 3$, then $e=1$ and $q_{e}(k)=Q_1(k)/2$.
\end{corollary}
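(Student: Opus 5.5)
This corollary is the specialization $r=3$ of the preceding corollary, so the plan is to substitute into \eqref{eq:equal-shift-summation} and simplify. The main task is to rewrite the single surviving term $i=3$ of $\sum_{i=3}^r$ in terms of the scalar $\alpha=1-\alpha_3+\alpha_1$.

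\textbf{Left-hand side.} Here $\fraka_{[1,2]}=(\alpha_3)$ and $\bfn_{[1,2]}=(n_3)$, so $\cA=\alpha$ and $\bfn_{[1,2]}$ becomes the scalar $n_3$. The prefactor and the summand on the left of \eqref{eq:equal-shift-summation} then read exactly as stated. The same holds for $E_j$ from \eqref{eq:e-j-definition}, since $\psi(1-\cA-\bfn_{[1,2]}-k+j)=\psi(1-\alpha-n_3-k+j)$.

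\textbf{The $i=3$ term.} With $\alpha_2=\alpha_1$ we have $\fraka_{[3]}=(\alpha_1,\alpha_1)$ and $\alpha_3-\alpha_1=1-\alpha$. I would compute the pieces in turn.
\begin{enumerate}[(i)]
\item Numerator: $1-\frakb+\alpha_3 = 1-\frakb+\alpha_1-(\alpha_1-\alpha_3)=\cB+1-\alpha$, giving $(\cB-\alpha+1)_{\bfm+k}$.
\item Denominator: $(\alpha_3-\fraka_{[3]})_{\bfn_{[3]}+k+1}=(1-\alpha)_{n_1+k+1}(1-\alpha)_{n_2+k+1}=[(1-\alpha)_{n+k+1}]^2$.
\item Upper parameters of the ${}_6F_5$: $\frakb-\alpha_3=\alpha-\cB$; also $\fraka_{[3]}-\alpha_3-\bfn_{[3]}-k$ has the two equal entries $\alpha-1-n-k$.
\item Lower parameters: $\frakb-\alpha_3-\bfm-k=\alpha-\cB-\bfm-k$, and $1+\fraka_{[3]}-\alpha_3=(\alpha,\alpha)$.
\end{enumerate}
Collecting these gives the displayed formula.

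\textbf{The special cases.} For $r=3$ we have $e=\max\{-1,M-N-2\}$.
\begin{enumerate}[(1)]
\item If $M-N=1$, then $e=-1$, so $q_e\equiv0$.
\item If $M-N=2$, then $e=0$, so $q_0=1$.
\item If $M-N=3$, then $e=1$, and the recurrence \eqref{eq:q-e-recurrence} gives $q_1=\tfrac{1}{1}\cdot\tfrac{(-1)^2}{2}Q_1q_0=Q_1(k)/2$.
\end{enumerate}

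The only real obstacle is careful bookkeeping of signs and shifts in step (ii) and in the Pochhammer index $\bfn_{[3]}+k+1$. The rest is direct substitution.
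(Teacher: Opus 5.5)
Your proposal is correct and takes the same approach as the paper, which states this corollary without a separate proof because it is simply the $r=3$ specialization of \eqref{eq:equal-shift-summation}. Your bookkeeping for the $i=3$ term is right: $\alpha_3-\alpha_1=1-\alpha$ gives the denominator $[(1-\alpha)_{n+k+1}]^2$ and the repeated upper parameter $\alpha-1-n-k$, and $e=\max\{-1,M-N-2\}$ together with \eqref{eq:q-e-recurrence} gives $q_1=Q_1/2$.
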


\begin{corollary}
Let $n_1,n_2,m_1,m_2$ be nonnegative integers such that $n_1 \le n_2$, and let $a_1, a_2$ be generic complex parameters. The following identity holds:
\begin{multline}\label{eq:r2-p0-k0-summation}
\sum_{j=0}^{n_1}\frac{(-n_1)_j (-n_2)_j (a_1)_{j}(a_2)_{j}   }{(a_1-m_1)_{j}(a_2-m_2)_{j}  (j!)^2} \Big[\psi(a_1-m_1+j)+\psi(a_2-m_2+j)\\-\psi(a_1+j)-\psi(a_2+j)+2H_j-H_{n_1-j} -H_{n_2-j}\Big] \\-\sum_{j=n_1+1}^{n_2} \frac{(-n_2)_j (a_1)_j (a_2)_j}{(a_1-m_1)_j (a_2-m_2)_j (j!)^2} (-1)^{n_1} n_1! (j-n_1-1)! = \frac{n_1!n_2!}{(1-a_1)_{m_1}(1-a_2)_{m_2}} q_e(0)
\end{multline}
\end{corollary}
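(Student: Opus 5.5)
The plan is to obtain \eqref{eq:r2-p0-k0-summation} as the special case $r=2$, $p=0$, $k=0$ of Theorem~\ref{th:finite-digamma-sum}, and then simplify exactly as in the proof of the corollary giving \eqref{eq:equal-shift-summation}. The one new feature is that now $n_1\ne n_2$ is allowed.

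\emph{Step 1: specialization.} With $r=2$ the vector $\cA$ is empty. Hence $\Gamma(\cA-1)$, $(2-\cA)_j$, $(1-\cA-\bfn_{[1,2]}-k)_j$ and the corresponding digamma terms all disappear, and there is no sum $\sum_{i=3}^r$. Put $a_i:=1-\cB_i=\beta_i-\alpha_1$, so that $1-\cB-\bfm=a-\bfm$ and $\Gamma(\cB+\bfm)/\Gamma(\cB)=(1-a_1)_{m_1}(1-a_2)_{m_2}$. With $p=0$ and $k=0$ one gets
\[
\cG_0=\hat\cG_0=\frac{(1-a_1)_{m_1}(1-a_2)_{m_2}}{n_1!\,n_2!},\qquad
\cH_j=\hat\cH_j=\frac{(-n_1)_j(-n_2)_j(a_1)_j(a_2)_j}{(a_1-m_1)_j(a_2-m_2)_j(j!)^2}.
\]
The condition $k\ge -m_{\min}$ holds since $m_i\ge 0$. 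The left-hand side of \eqref{eq:finite-digamma-sum} therefore becomes $\cG_0\sum_j\cH_j(\Theta_j-\hat\Theta_j)$. The two ${}_{2r}F_{2r-1}$ terms on the right coincide, each being $\sum_j\cH_j$, so together they contribute $\cG_0(\Psi_0+\hat\Psi_0)\sum_j\cH_j$.

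\emph{Step 2: splitting $\Theta_j-\hat\Theta_j$.} For $0\le j\le n_1$ (so also $j\le n_2$) I will use \eqref{eq:digamma-increment} and \eqref{eq:digamma-increment-negative}, i.e. $\psi(-n_i;j)=H_{n_i-j}-H_{n_i}$ and $\psi(1;j)=H_j$. This gives
\[
\Theta_j-\hat\Theta_j=\big[\psi(a-\bfm+j)-\psi(a+j)+2H_j-H_{n_1-j}-H_{n_2-j}\big]+\big[\psi(a)-\psi(a-\bfm)+H_{n_1}+H_{n_2}\big].
\]
By the reflection formula, $\psi(1-x)-\psi(x)=\pi\cot\pi x$ is invariant under $x\mapsto x-m$. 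Hence $\psi(a)-\psi(a-\bfm)=\psi(1-a)-\psi(1-a+\bfm)$, and the second bracket is exactly $\Psi_0+\hat\Psi_0=\psi(\cB)-\psi(\cB+\bfm)+H_{n_1}+H_{n_2}$. It therefore cancels the $\Psi$-terms on the right-hand side, just as in the preceding corollary, with one proviso. For $j>n_1$ we have $\cH_j=0$, so $\sum_j\cH_j$ runs only up to $n_1$; the constant bracket is thus only needed, and only cancels, on $0\le j\le n_1$.

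\emph{Step 3: the range $n_1<j\le n_2$.} This is the delicate step. Here $(-n_1-\epsilon')_j\to0$ while $\psi(-n_1;j)$ has a pole, and all terms of $\Theta_j-\hat\Theta_j$ other than $-\psi(-p-n_1-k;j)$ are finite. Only that term survives, and it must be read as the $0\cdot\infty$ limit prescribed in the theorem; this is where the general theorem's regularization has to be unpacked carefully. By \eqref{eq:digamma-pochhammer-limit}, $(-n_1)_j\,\psi(-n_1;j)\to(-1)^{n_1}n_1!(j-n_1-1)!$. This produces the term $-\sum_{j=n_1+1}^{n_2}(-n_2)_j(a_1)_j(a_2)_j(-1)^{n_1}n_1!(j-n_1-1)!/\big((a_1-m_1)_j(a_2-m_2)_j(j!)^2\big)$.

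I will double-check two points here. First, that the $\epsilon$-shift enters $-p-\epsilon-n_1-k$ with the same sign as in \eqref{eq:digamma-pochhammer-limit}; this fixes the overall sign, and the Remark after Theorem~\ref{th:finite-digamma-sum} with $x=-n_1$ confirms $(-1)^{n_1}n_1!(j-n_1-1)!$. Second, that no other $\epsilon$-dependent Pochhammer factor vanishes in this range.

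\emph{Step 4: conclusion.} Finally I will divide by $\cG_0$, which moves the factor $n_1!n_2!/\big((1-a_1)_{m_1}(1-a_2)_{m_2}\big)$ onto $q_e(0)$. The result is \eqref{eq:r2-p0-k0-summation}.
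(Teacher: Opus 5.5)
Your proposal is correct and follows essentially the same route as the paper: specialize Theorem~\ref{th:finite-digamma-sum} to $r=2$, $p=k=0$. The $j$-independent part of $\Theta_j-\hat\Theta_j$ equals $\Psi_0+\hat\Psi_0$ by the reflection-formula identity $\psi(a)-\psi(a-m)=\psi(1-a)-\psi(1-a+m)$, so it cancels, and the range $n_1<j\le n_2$ is handled via \eqref{eq:digamma-pochhammer-limit}. Your sign concern in Step 3 does not arise: $(x)_j\,\psi(x;j)=\frac{d}{dx}(x)_j$ is a polynomial in $x$, so the regularized value $(-1)^{n_1}n_1!(j-n_1-1)!$ does not depend on the direction from which $x$ approaches $-n_1$.
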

\begin{proof}
We take $r=2$, $p=k=0$ in Theorem~\ref{th:finite-digamma-sum}.  As  $r=2$, the sum over $i$ from $3$ to $r$ on the RHS of \eqref{eq:finite-digamma-sum} vanishes and the vector $\cA$ is empty. The product $\cG_0\cH_{j}\Theta_{j}$ then becomes
\begin{equation*}
\frac{(\cB)_{\bfm}}{{n}_{1}!n_{2}!}   
\frac{(-n_2)_j (-n_1)_j (1-\cB)_j }{(1-\cB-\bfm)_{j}(j!)^2}
(\psi(1-\cB-\bfm;j)-\psi(1-\cB;j)-\psi(-{n}_1;j)
+\psi(1;j)).
\end{equation*}
 Hence, by virtue of Theorem \ref{th:finite-digamma-sum} we have
\begin{align*}
\sum_{j\ge0}\frac{(-n_1)_j (-n_2)_j (1-\cB)_j  [\Theta_j-\hat{\Theta}_j-\Psi_0-\hat{\Psi}_0]}{(1-\cB-\bfm)_j  (j!)^2}=\frac{n_1!n_2!}{(\cB)_\bfm} q_e(0),
\end{align*}
where
\begin{align*}
&\Theta_j-\hat{\Theta}_j-\Psi_0-\hat{\Psi}_0= \psi(1-\cB-\bfm;j)-\psi(1-\cB;j)-\psi(-{n}_1;j)
+2\psi(1;j)-\psi(-{n}_2;j)\\
&-\psi(\cB)+\psi(\cB+\bfm)-\psi({n}_1+1)+2\psi(1)-\psi({n}_2+1).
\end{align*}
Substituting the definition \eqref{eq:digamma-increment}, applying \eqref{eq:digamma-increment-negative}  and replacing $1-\cB=(a_1,a_2)$ for brevity, for $0\le{j}\le n_1$ we get 
\begin{align*}
&\Theta_j-\hat{\Theta}_j-\Psi_0-\hat{\Psi}_0\\
&=-\psi(a_1+j)-\psi(a_2+j)+\psi(a_1-m_1+j)+\psi(a_2-m_2+j)+2H_j-H_{n_1-j} -H_{n_2-j}.
\end{align*}
Finally, canceling identical terms in view of the easily verifiable identity 
\begin{multline*}
\psi(a_1)+\psi(a_2)-\psi(1-a_1)-\psi(1-a_2)-\psi(a_1-m_1)-\psi(a_2-m_2)+\psi(1-a_1+m_1)\\+\psi(1-a_2+m_2)=0,
\end{multline*} 
and applying  \eqref{eq:digamma-pochhammer-limit} for $n_1<j\le {n_2}$ we arrive at \eqref{eq:r2-p0-k0-summation}.
\end{proof}

\begin{remark}
Further curious  particular cases of \eqref{eq:r2-p0-k0-summation} can be obtained by observing that
\begin{enumerate}
\item If $m_1+m_2-n_1-n_2=0$, then $e=-1$ and $q_e(0)=0$.
\item If $m_1+m_2-n_1-n_2=1$, then $e=0$ and $q_e(0)=1$.
\item If $m_1+m_2-n_1-n_2=2$, then $e=1$ and $q_e(0)=Q_1(0)/2$.
\end{enumerate}
\end{remark}

\begin{remark}
If we assume $n_1=n_2$ and $m_1+m_2=2n$ in \eqref{eq:r2-p0-k0-summation}, then the second sum vanishes and $q_e(0)=0$, so that
\begin{multline*}
\sum\limits_{j=0}^{n}\frac{[(-n)_{j}]^2(a_1)_{j}(a_2)_{j}}{(a_1-m_1)_{j}(a_2-m_2)_{j}(j!)^2}(\psi(a_1-m_1+j)+\psi(a_2-m_2+j)
\\
-\psi(a_1+j)-\psi(a_2+j)+2(H_{j}-H_{n-j}))=0.
\end{multline*} 
If $m_1+m_2=2n+1$ the right-hand side of the above formula becomes $$
\frac{(n!)^2}{(1-a_{1})_{m_1}(1-a_{2})_{m_2}}.$$
\end{remark}
A different type of corollary can be obtained when both factors $\cG_k$ and $\hat{\cG}_k$ vanish while  the products $\cG_k\Psi_{k}$ and $\hat{\cG}_k\hat{\Psi}_{k}$ remain finite and nonzero. In this case, all digamma series vanish, and we obtain the following purely hypergeometric relation.

\begin{corollary}
Under the assumptions of Theorem~\ref{th:finite-digamma-sum}, let $q=|p|\ge1$, assume that the remaining parameters are generic, and
suppose
\begin{equation*} 
 0\le k+n_1\le q-1,\qquad 0\le k+n_2\le q-1.
 \end{equation*}
Then    
\begin{align}
 &\sum_{i=3}^r
 \frac{(1-\frakb+\alpha_i)_{\bfm+k}}
 {(\alpha_i-\fraka_{[i]})_{\bfn_{[i]}+k+1}(k+n_i)!}\:
 {}_{2r}F_{2r-1}\!\left(
 \begin{matrix}
 -k-n_i,\ \frakb-\alpha_i,\
 \fraka_{[i]}-\alpha_i-\bfn_{[i]}-k\\
 \frakb-\alpha_i-\bfm-k,\
 1+\fraka_{[i]}-\alpha_i
 \end{matrix}\right)
 \notag\\
 &=q_e(k)+C_k\cdot{}{}_{2r}F_{2r-1}\!\left(
 \begin{matrix}
 -k-n_2,-p-k-n_1,1-\cB,
 1-\cA-\bfn_{[1,2]}-k\\
 1-p,1-\cB-\bfm-k,2-\cA
 \end{matrix}\right)
 \notag\\
 &+\widehat C_k\cdot{}_{2r}F_{2r-1}\!\left(
 \begin{matrix}
 -k-n_1,p-k-n_2,1-\cB+p,
 1-\cA+p-\bfn_{[1,2]}-k\\
 1+p,1-\cB+p-\bfm-k,2-\cA+p
 \end{matrix}\right),
 \label{eq:vanishing-digamma-identity}
\end{align}
where for $q=p>0$, we have
\begin{align}
 C_k&=-\frac{(q-1)!\Gamma(\cB+\bfm+k)\Gamma(\cA-1)}
 {\Gamma(q+k+n_1+1)\Gamma(k+n_2+1)\Gamma(\cB)\Gamma(\cA+\bfn_{[1,2]}+k)},
 \label{eq:c-k-positive-p}\\
 \widehat C_k&=
 \frac{(-1)^{k+n_2}(q-k-n_2-1)!\Gamma(\cB-p+\bfm+k)\Gamma(\cA-p-1)}
 {q!\,\Gamma(k+n_1+1)\Gamma(\cB-p)\Gamma(\cA-p+\bfn_{[1,2]}+k)}
 {},
 \label{eq:hat-c-k-positive-p}
\end{align}
and for  $q=-p>0$,  we have
\begin{align*}
 C_k&=
 \frac{(-1)^{k+n_1}(q-k-n_1-1)!\Gamma(\cB+\bfm+k)\Gamma(\cA-1)}
 {q!\,\Gamma(k+n_2+1)\Gamma(\cB)\Gamma(\cA+\bfn_{[1,2]}+k)},
\\ 
 \widehat C_k&=-\frac{(q-1)!\Gamma(\cB-p+\bfm+k)\Gamma(\cA-p-1)}
 {\Gamma(q+k+n_2+1)\Gamma(k+n_1+1)\Gamma(\cB-p)\Gamma(\cA-p+\bfn_{[1,2]}+k)}.
\end{align*}
Note that hypergeometric series on the right-hand side of \eqref{eq:vanishing-digamma-identity} terminate before a negative integer denominator is reached. 
\end{corollary}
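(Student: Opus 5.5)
We derive this as a specialization of Theorem~\ref{th:finite-digamma-sum}. As in that proof, we treat $q=p>0$ in detail; the case $p<0$ follows by interchanging the indices $1$ and $2$. Under $0\le k+n_1\le q-1$ and $0\le k+n_2\le q-1$ we have $p+n_1+k+1\ge1$ and $-p+n_2+k+1\le 0$.

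\emph{The prefactors $\cG_k$ and $\hat\cG_k$.} In $\cG_k$ the factor $1/\Gamma(1-p)$ vanishes, since $1-p\le0$. In $\hat\cG_k$ the factor $1/\Gamma(-p+n_2+k+1)$ vanishes, since its argument is a non-positive integer. So both $\cG_k$ and $\hat\cG_k$ are zero. The left-hand side of \eqref{eq:finite-digamma-sum} is then $0$ times a terminating sum. We must check that $\sum_j\cH_j\Theta_j$ and $\sum_j\hat\cH_j\hat\Theta_j$ stay finite in the regularized sense, i.e.\ the $\epsilon$-deformed versions from the proof of Theorem~\ref{th:finite-digamma-sum} have no pole at $\epsilon=0$.

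\emph{Finiteness of the digamma sums (main obstacle).} For $\cH_j$, the numerator $(-k-n_2)_j$ truncates the sum at $j\le k+n_2\le q-1$. The denominator $(1-p)_j$ vanishes only from $j\ge p$ on, so it is never reached. In $\Theta_j$, the term $\psi(1-p;j)$ with $j\le p-1$ is finite by \eqref{eq:digamma-increment-negative}. The term $\psi(-p-n_1-k;j)$ is paired with $(-p-n_1-k)_j$ via \eqref{eq:digamma-pochhammer-limit} and the Remark on derivatives of Pochhammer factors, so it is finite as well. For $\hat\cH_j$, the truncation is at $j\le k+n_1\le q-1$ and $(1+p)_j\neq0$. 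The pair $\psi(p-n_2-k;j)$ with $(p-n_2-k)_j$ involves a positive argument $p-n_2-k\ge1$, so it is harmless. I expect this bookkeeping of $0\cdot\infty$ products to be the delicate step. It has to be done at the level of the $\epsilon$-expansions $h_1,h_2$: only terms where a vanishing reciprocal gamma multiplies a simple pole survive, and exactly such terms are the $\Psi_k$, $\hat\Psi_k$ contributions, not the $\Theta_j$ ones. Granting this, the left-hand side of \eqref{eq:finite-digamma-sum} is $0$.

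\emph{Finite limits of $\cG_k\Psi_k$ and $\hat\cG_k\hat\Psi_k$.} In $\Psi_k$ the term $-\psi(1-p)$ has a pole, and by \eqref{eq:psi-over-gamma-negative}
\[
-\frac{\psi(1-p)}{\Gamma(1-p)}=(-1)^{p}(p-1)!.
\]
All other terms of $\Psi_k$ are finite and are killed by $1/\Gamma(1-p)=0$. Hence
\[
\cG_k\Psi_k=\frac{(-1)^p(-1)^p(q-1)!\,\Gamma(\cB+\bfm+k)\Gamma(\cA-1)}{\Gamma(\cB)\Gamma(\cA+\bfn_{[1,2]}+k)\Gamma(q+n_1+k+1)(k+n_2)!}.
\]
Moving this term to the left-hand side gives the sign in \eqref{eq:c-k-positive-p}, so $C_k=-\cG_k\Psi_k$. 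Similarly, in $\hat\Psi_k$ the term $\psi(-p+n_2+k+1)$ is singular, and with $N':=q-k-n_2-1\ge0$,
\[
\frac{\psi(-N')}{\Gamma(-N')}=(-1)^{N'+1}N'!.
\]
Multiplying by $(-1)^p/\Gamma(1+p)$ and the remaining gamma factors of $\hat\cG_k$ gives $\pm\widehat C_k$ as in \eqref{eq:hat-c-k-positive-p}, with sign $(-1)^{p+N'+1}=(-1)^{k+n_2}$ after moving it across.

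\emph{Assembling the identity.} Substituting these values into \eqref{eq:finite-digamma-sum} and moving the $\sum_{i=3}^r$ term to the left yields \eqref{eq:vanishing-digamma-identity}. The ${}_{2r}F_{2r-1}$ factors terminate at $j\le k+n_2$ and $j\le k+n_1$ respectively, both $\le q-1$. This is before the denominators $(1-p)_j$ and $(1+p)_j$ could vanish, which justifies the closing remark of the statement. For $p<0$, applying the index swap (so $n_1\leftrightarrow n_2$ and $p\mapsto -p$) interchanges the roles of the two computations and gives the second pair of formulas for $C_k$ and $\widehat C_k$.
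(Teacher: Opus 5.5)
Your structure is the paper's own. The paper also argues that $\cG_k$ and $\hat\cG_k$ vanish through $1/\Gamma(1-p)$ and $1/\Gamma(-p+n_2+k+1)$, that the $\Theta_j$, $\hat\Theta_j$ terms are finite on the truncated ranges $j\le k+n_2$, $j\le k+n_1$, that $C_k,\widehat C_k$ are the surviving products evaluated by \eqref{eq:psi-over-gamma-negative}, and that $p<0$ follows by swapping indices.

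Nothing needs to be ``granted'' in the middle step. In $h_1,h_2$ the $\epsilon$-dependent ${}_{2r}F_{2r-1}$ factors have a length independent of $\epsilon$, and their denominators stay nonzero near $\epsilon=0$. So $h_2=R_2(\epsilon)/\Gamma(1-p-\epsilon)$ with $R_2$ smooth, hence $h_2'(0)=R_2(0)\psi(1-p)/\Gamma(1-p)$, and likewise for $h_1$; this is exactly the finiteness you verified. Also, $\psi(-p-n_1-k;j)$ needs no pairing via \eqref{eq:digamma-pochhammer-limit}: since $j\le p-1<p+n_1+k$, it is finite by \eqref{eq:digamma-increment-negative}.

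\textbf{The sign bookkeeping is wrong.} It contains two errors that happen to cancel for $C_k$ only.

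\emph{First error.} By \eqref{eq:psi-over-gamma-negative} with $n=p-1$, $\psi(1-p)/\Gamma(1-p)=(-1)^p(p-1)!$. Hence $-\psi(1-p)/\Gamma(1-p)=(-1)^{p+1}(p-1)!$, not $(-1)^p(p-1)!$; for $p=1$, $-\psi(\epsilon)/\Gamma(\epsilon)\to 1$. Therefore
$\cG_k\Psi_k=-(q-1)!\,\Gamma(\cB+\bfm+k)\Gamma(\cA-1)/[\Gamma(\cB)\Gamma(\cA+\bfn_{[1,2]}+k)\Gamma(q+n_1+k+1)(k+n_2)!]$,
which is already \eqref{eq:c-k-positive-p}.

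\emph{Second error.} Once the left side of \eqref{eq:finite-digamma-sum} is zero, only the $\sum_{i=3}^r$ term moves across. Writing $\Phi,\hat\Phi$ for the two terminating series, this gives $\sum_{i=3}^r(\cdots)=q_e(k)+\cG_k\Psi_k\,\Phi+\hat\cG_k\hat\Psi_k\,\hat\Phi$. So $C_k=\cG_k\Psi_k$ and $\widehat C_k=\hat\cG_k\hat\Psi_k$, with no sign change. Your claim $C_k=-\cG_k\Psi_k$ is false and merely undoes the first slip.

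Your own $\widehat C_k$ computation shows the inconsistency. The sign $(-1)^{p+N'+1}=(-1)^{k+n_2}$ you found is the sign of $\hat\cG_k\hat\Psi_k$ itself, so ``after moving it across'' and the $\pm$ must go. As written, no single convention gives both $C_k$ and $\widehat C_k$ the correct sign. Correct the value of $-\psi(1-p)/\Gamma(1-p)$ and use $C_k=\cG_k\Psi_k$, $\widehat C_k=\hat\cG_k\hat\Psi_k$; then the argument is complete and coincides with the paper's.
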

\begin{proof}
Indeed, take $p\ge1$.  The factor of the product $\cG_k\cH_{j}\Theta_{j}$ dependent on integer arguments (and hence potentially singular) has the form
$$
\frac{(-k-n_2)_{j}(-p-n_1-k)_{j}[\psi(1-p;j)-\psi(-p-k-n_1;j)]}{\Gamma(1-p)\Gamma(p+n_1+k+1)\Gamma(k+n_2+1)(1-p)_{j}j!}
$$
The summation range in $j$ is $0,\ldots,k+n_2$, where $k+n_2<p$  by assumption, so that $(1-p)_{j}\ne0$. Further, $j<p+k+n_1$, so that all digamma terms are non-singular in view of \eqref{eq:digamma-increment-negative} and the factor $1/\Gamma(1-p)=0$ annihilates the first sum in \eqref{eq:finite-digamma-sum}.  Similarly,  the factor of the product $\hat{\cG}_k\hat{\cH}_{j}\hat{\Theta}_{j}$ dependent on integer arguments (and hence potentially singular) has the form
$$
\frac{(-k-n_1)_{j}(p-n_2-k)_{j}[\psi(p-n_2-k;j)-\psi(1+p;j)]}{\Gamma(1+p)\Gamma(-p+k+n_2+1)\Gamma(k+n_1+1)(1+p)_{j}}.
$$
As $p-n_2-k>0$ and $p>0$ by assumption all digamma terms are non-singular, while  $1/\Gamma(-p+k+n_2+1)=0$ annihilates the second sum in \eqref{eq:finite-digamma-sum}.  Formulas \eqref{eq:c-k-positive-p} and \eqref{eq:hat-c-k-positive-p} are evaluations of $\cG_k\Psi_{k}$ and $\hat{\cG}_k\hat{\Psi}_{k}$ using \eqref{eq:psi-over-gamma-negative}.  The argument for negative $p$ is similar. 
\end{proof}

\section{Degeneration of a contiguous type sum-product identity}\label{sec:contiguous-degeneration}

The following relation was obtained in  \cite[eq. (6.7)]{Cetinkaya_Karp_2021} by taking the limit $q\to1$ of the identity \cite[Theorem~1.1]{GITZ_2015}:
\begin{multline}\label{eq:contiguous-product-identity}
a(b-d)(c-d)(b+c-a) {}_{r+4}F_{s+3}\left(\begin{array}{c}
 b+c-a-1,b+c-2,c,d,\bfe\\ a,b-1,b+c-d-1,\bff
\end{array} \middle| z\right)\\
\times {}_{r+4}F_{s+3}\left(\begin{array}{c}
 b+c-a+1,b+c,c,d,\bfe+1\\ a,b+1,b+c-d+1,\bff+1
\end{array} \middle| z\right)\\
-d(b-a)(c-a)(b+c-d){}_{r+4}F_{s+3}\left(\begin{array}{c}
 b+c-a,b+c-2,c,d-1,\bfe\\ a-1,b-1,b+c-d,\bff
\end{array} \middle| z\right)\\
\times {}_{r+4}F_{s+3}\left(\begin{array}{c}
 b+c-a,b+c,c,d+1,\bfe+1\\ a+1,b+1,b+c-d,\bff+1
\end{array} \middle| z\right)\\
=bc(a-d)(b+c-a-d) {}_{r+4}F_{s+3}\left(\begin{array}{c}
 b+c-a,b+c-2,c-1,d,\bfe\\ a-1,b,b+c-d-1,\bff
\end{array} \middle| z\right) \\
\times {}_{r+4}F_{s+3}\left(\begin{array}{c}
 b+c-a,b+c,c+1,d,\bfe+1\\ a+1,b,b+c-d+1,\bff+1
\end{array} \middle| z\right).
\end{multline}
This relation fails when $b=1$. Somewhat surprisingly, computing the limit $b\to1$ does not lead to a digamma series: instead, we obtain another hypergeometric sum-product identity tated in the following theorem which we believe to be new.

\begin{theorem}\label{th:contiguous-degenerate-identity}
Let $a,c,d\in \mathbb{C}$, $\bfe \in \mathbb{C}^{r}, \bff \in \mathbb{C}^{s}$ with $r\le{s}$ be such that no displayed denominator parameter is a non-positive integer. Then the following identity holds as a formal power series in $z$, and hence wherever the displayed series converge
\begin{align}\label{eq:contiguous-degenerate-identity}
& a(c\!-\!d)(1\!-\!d)(1\!+\!c\!-\!a)F\!\left(\!\!\begin{array}{c}
 c-a+2,1+c,c,d,\bfe+1\\ a,2,c-d+2,\bff+1
\end{array} \middle| z\right) \!F\!\left(\!\!\begin{array}{c}
 c-a,c-1,c-1,d,\bfe\\ 1,a,c-d,\bff
\end{array} \middle| z\right ) \notag \\
&-d(c-a)(1-a)(1+c-d) F\!\left(\!\!\begin{array}{c}
 c-a+1,c,c+1,d+1,\bfe+1\\ a+1,2,c-d+1,\bff+1
\end{array} \middle| z\right) \notag \\
& \times F\!\left(\!\!\begin{array}{c}
 c-a+1,c-1,c-1,d-1,\bfe\\ 1,a-1,c-d+1,\bff
\end{array} \middle| z\right) -c(a-d)(1+c-a-d) \notag \\
&\times  F\!\left(\!\!\begin{array}{c}
 1+c-a,c-1,c-1,d,\bfe\\ a-1,1,c-d,\bff
\end{array} \middle| z\right) F\!\left(\!\!\begin{array}{c}
 1+c-a,1+c,c+1,d,\bfe+1\\ a+1,1,c-d+2,\bff+1
\end{array} \middle| z\right) \notag \\
&=z \frac{dc(a-d)(1+c-a-d)(1+c-a)(c-1)^2(\bfe)_1}{(c-d)(1-a)(\bff)_1} \,F\!\left(\!\!\begin{array}{c}
 c-a+2,1+c,c,d,\bfe+1\\ a,2,c-d+2,\bff+1
\end{array} \middle| z\right) \notag\\
&  \times  F\!\left(\!\!\begin{array}{c} 
 c-a+1,c,c+1,d+1,\bfe+1\\ a+1,2,c-d+1,\bff+1
\end{array} \middle| z\right). 
\end{align} 
\end{theorem}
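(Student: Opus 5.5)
We obtain \eqref{eq:contiguous-degenerate-identity} as the limit $b\to1$ of \eqref{eq:contiguous-product-identity}, which we treat as an identity of formal power series in $z$. Its coefficients are rational functions of $b$ for fixed $a,c,d,\bfe,\bff$.

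The only singular objects at $b=1$ are the factors whose lower parameters contain $b-1$. These are the first factors of the first two products on the left, which contain ${}_{r+4}F_{s+3}(\ldots;b-1,\ldots)$. Their coefficients of $z^k$ carry $1/(b-1)_k$, so they have a simple pole at $b=1$ for every $k\ge1$. We therefore write
\[
\frac{1}{(b-1)_k}=\frac{1}{(b-1)\,(b)_{k-1}},\qquad k\ge1,
\]
and split each such series as $1+\frac{1}{b-1}\,z\,G(b;z)$. Here $G$ is regular at $b=1$ and is an explicit contiguous series. For the first product, $G$ is $\frac{(b+c-a-1)(b+c-2)c\,d\,(\bfe)_1}{a(b+c-d-1)(\bff)_1}$ times a series with all parameters shifted by one and lower parameter $b$. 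At $b=1$, the numerator parameter $b+c-2$ becomes $c-1$, which produces the $(c-1)$ factors and the $(\bfe)_1/(\bff)_1$ on the right of \eqref{eq:contiguous-degenerate-identity}.

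Multiplying through by $(b-1)$, or equivalently comparing residues and then finite parts, splits the limit into two statements.

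First, the coefficient of $1/(b-1)$ must vanish at $b=1$. This is the combination
\[
a(1-d)(c-d)(c+1-a)\,zG_1F_2-d(1-a)(c-a)(c+1-d)\,zG_3F_4\Big|_{b=1}=0,
\]
and I would check it as a consistency condition. Here the prefactor $b-d$ becomes $1-d$, and $b-a$ becomes $1-a$.

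Second, the finite part gives the identity. Expand each prefactor, each regular factor, and the split singular factor to first order in $b-1$. Terms arising from $\partial_b$ of the regular pieces multiply the vanishing residue combination, or combine into derivatives, and would have to cancel. What remains should be:
\begin{itemize}
\item the products with the $b=1$ values of the series, with lower parameters $1$ and $2$, which are exactly the three products on the left of \eqref{eq:contiguous-degenerate-identity};
\item the cross term coming from $zG_1\cdot zG_3$-type pieces, which would give the right-hand side with its $z$ prefactor.
\end{itemize}

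The expected main obstacle is that a naive first-order expansion produces $b$-derivatives of hypergeometric series, that is, digamma series, and these must cancel. The statement claims they do. To make this transparent, I would avoid differentiation altogether. I would first rewrite \eqref{eq:contiguous-product-identity} using the contiguous relation
\[
F(\ldots;b-1,\ldots)=F(\ldots;b,\ldots)+\tfrac{\text{(shift)}}{b-1}\,zF(\ldots{+}1;b+1,\ldots),
\]
then multiply by $b-1$ and use the exact identity again, so that every $b$-dependence becomes polynomial times series that are regular at $b=1$. After that, setting $b=1$ is direct, and the remaining task is the algebra matching prefactors. In particular, the factor $\frac{dc(a-d)(1+c-a-d)}{(c-d)(1-a)}$ on the right arises from dividing the residue-cancellation relation by $(1-a)(c-d)$.

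As a fallback, I would verify \eqref{eq:contiguous-degenerate-identity} coefficientwise. Both sides are formal series whose $z^n$ coefficients are finite double sums of rational functions, and a Zeilberger-type certificate could confirm it for general $n$.
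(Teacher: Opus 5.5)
Your overall strategy is the same as the paper's Appendix proof: let $b\to1$ in \eqref{eq:contiguous-product-identity}, split off the pole of the two factors with lower parameter $b-1$, check that the residue vanishes, and read off the finite part. Your residue check is correct. The gap is the finite part, which you only assert, and the structure you predict for it is wrong.

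Setting $b=1$ in the series that are regular there produces only the four functions $A,B,E,H$ of Theorem~\ref{th:contiguous-generalization} with $\kappa=1$. The series $D$ and $C$, with the repeated upper parameter $c-1$ and lower parameter $1$, are not $b=1$ values of any factor in \eqref{eq:contiguous-product-identity}. The term $1$ in your splitting $1+\frac{zG}{b-1}$ contributes only
\[
a(c-d)(1-d)(1+c-a)B-d(c-a)(1-a)(1+c-d)A,
\]
not $BD$ and $AC$. The missing pieces $B(D-1)$ and $A(C-1)$ come precisely from the $b$-derivatives of the series, which you expect to cancel. They do not cancel; they combine. In the paper's notation the digamma series pair up as $\cU_1(k+1)-\cV_2(k)$ and $\cV_1(k)-\cU_2(k+1)$, and
\[
\psi(c+k)-\psi(c+1+k)+\psi(k+2)-\psi(k+1)=\frac{c-1}{(c+k)(k+1)}
\]
turns each pair into a pure hypergeometric series. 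These resum to multiples of $D-1$ and $C-1$, respectively. Likewise, the $zAB$ term on the right cannot come from ``$zG_1\cdot zG_3$'' pieces, because the two singular factors never multiply each other. Nor does it come from dividing the residue relation, which merely says that two coefficients agree. It is genuinely first-order information: $g(\widehat g_2-\widehat g_1)$, with $\widehat g_2-\widehat g_1=\frac{(a-d)(c-1)(1+c-a-d)}{(1-a)(1-d)(c-d)(c-a)}$.

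Your differentiation-free variant does not work as stated. After applying $F(\dots;b-1,\dots)=F(\dots;b,\dots)+\frac{(\cdots)}{(b-1)b}\,zF(\dots+1;b+1,\dots)$, the $1/(b-1)$ term multiplies a difference of products of series that still depend on $b$ through their parameters. For instance, the series produced from the first singular factor has upper parameters $b+c-1,\,c+1$. The regular factor of the second product, which it must be matched against, has $b+c,\,c$, and the two agree only at $b=1$. So the $b$-dependence is not ``polynomial times series'', and setting $b=1$ in the quotient is not direct. You would need either $\partial_b$, which brings back the digamma series, or a further explicit relation to extract the factor $b-1$. One such relation is $(u)_k(v+1)_k-(u+1)_k(v)_k=\frac{(u-v)k}{uv}(u)_k(v)_k$ with $u=b+c-1$, $v=c$. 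Even then, you must still rearrange $F(c-a,c-1,c,d,\bfe;1,a,c-d,\bff)$ and the leftover terms into $D$ and $C$.

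The clean form of this idea is the paper's proof of Theorem~\ref{th:contiguous-generalization} at $\kappa=1$: substitute the coefficientwise contiguous relations \eqref{eq:e-minus-d-general}, \eqref{eq:e-minus-c-general} and \eqref{eq:abh-general} into the left-hand side. Your Zeilberger fallback is not a proof either. The vectors $\bfe,\bff$ have unspecified length, so there is no fixed hypergeometric summand for which a certificate could be produced.
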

\begin{proof}
See the Appendix.
\end{proof}

We placed the proof of the above theorem in the Appendix because it is a particular case of a more general result containing an additional parameter $\kappa$ and established using contiguous relations below. We recover \eqref{eq:contiguous-degenerate-identity} when $\kappa=1$.

\begin{theorem}\label{th:contiguous-generalization}
Let $a,c,d,\kappa\in\mathbb{C}$, $\bfe \in \mathbb{C}^{r}, \bff \in \mathbb{C}^{s}$ with $r\le{s}$ be such that  no displayed denominator parameter is a non-positive integer. Define
\begin{align*}
&A\!=\!F\!\left(\!\!\begin{array}{c}
 c-a+\kappa,c+\kappa,c, d+1,\bfe+1\\ a+1,\kappa+1,c-d+\kappa,\bff+1
\end{array}\! \middle| z\!\right), ~ C\!=\! F\!\left(\!\!\begin{array}{c}
 c-a+\kappa,c+\kappa-2,c-1,d-1,\bfe\\ a-1,\kappa, c-d+\kappa,\bff
\end{array}\! \middle| z\!\right), \\
&B\!=\! F\!\left(\!\!\begin{array}{c}
 c-a+\kappa+1,c+\kappa,c,d,\bfe+1\\ a,\kappa+1,c-d+\kappa+1,\bff+1
\end{array}\! \middle| z\!\right), ~ D\!=\! F\!\left(\!\!\begin{array}{c}
 c-a+\kappa-1,c+\kappa-2,c-1,d,\bfe\\ a,\kappa,c-d+\kappa-1,\bff
\end{array} \!\middle| z\!\right),\\
&H\!=\! F\!\left(\!\!\begin{array}{c}
 c-a+\kappa,c+\kappa,c+1,d,\bfe+1\\ a+1,\kappa,c-d+\kappa+1,\bff+1
\end{array} \!\middle| z\!\right), ~ E\!=\!F\!\left(\!\!\begin{array}{c}
 c-a+\kappa,c+\kappa-2,c-1,d,\bfe\\ a-1,\kappa,c-d+\kappa-1,\bff
\end{array} \!\middle| z\!\right).
\end{align*} 
Then the following identity holds as a formal power series in $z$, and hence wherever the displayed series converge
\begin{align}\label{eq:contiguous-generalization}
& a(c-d)(\kappa-d)(\kappa+c-a)B D-d(c-a)(\kappa-a)(\kappa+c-d) AC \notag\\
& -\kappa c(a-d)(c-a-d+\kappa) EH \notag\\
&=z \frac{d(a-d)(c-a-d+\kappa)(c-1)(\kappa+c-a)(c+\kappa-1)(c+\kappa-2) (\bfe)_1}
{\kappa\,(\kappa+c-d-1)\,(1-a)\,(\bff)_1} AB .
\end{align}
\end{theorem}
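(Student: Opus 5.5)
The plan is to derive \eqref{eq:contiguous-generalization} from the non-degenerate identity \eqref{eq:contiguous-product-identity}, adjusting the factors that do not match by contiguous relations. The first step is to align parameters. Substituting $b=\kappa+1$ and $c\mapsto c-1$ in \eqref{eq:contiguous-product-identity} turns the first left-hand factor into exactly $D$, because then $b+c-a-1=c-a+\kappa-1$, $b+c-2=c+\kappa-2$ and $b-1=\kappa$. In the same way, the other ``unshifted'' factors of \eqref{eq:contiguous-product-identity}, those carrying the vector $\bfe$, become $C$ and $E$ up to shifts by one in the $c$-slot. The ``shifted'' factors, those carrying $\bfe+1$, coincide with $B$, $A$ and $H$ except that the numerator parameter $c-1$ is replaced by $c$ (in $H$, $c$ by $c+1$). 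For $\kappa$ generic this substitution is legitimate, since the singular point $b=1$ corresponds to $\kappa=0$. So I expect \eqref{eq:contiguous-generalization} to be a rearrangement of a specialization of \eqref{eq:contiguous-product-identity} in which one numerator parameter of each $(\bfe+1)$-factor is raised by one.

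The second step is to convert those factors. I would use the elementary contiguous relation for a single numerator parameter,
\[
c\,F(c+1,\ldots)-c\,F(c,\ldots)=z\,\frac{d}{dz}F(c,\ldots),
\]
together with the analogous relation for a denominator parameter. These express $F(\ldots,c,\ldots)$ through $F(\ldots,c-1,\ldots)$ and the series with all parameters raised by one, i.e.\ with $\bfe+2,\bff+2$. This raises the parameters of only one factor in each product. Derivative terms will appear, and the step relies on them combining, via the original identity differentiated or via the Leibniz rule on the products, into a multiple of the single product $AB$. The prefactor $z\,(\bfe)_1/(\bff)_1$ on the right of \eqref{eq:contiguous-generalization} is exactly what one expects from a single contiguous step $F(\bfe,\ldots)\to F(\bfe+1,\ldots)$. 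This supports the ansatz that the mismatch collapses to one extra product. A useful consistency check is that at $\kappa=1$ the result must reduce to \eqref{eq:contiguous-degenerate-identity}.

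The main obstacle is this bookkeeping of contiguous relations in the middle step, because a product of two series does not transform linearly. If the combination does not close cleanly, I would fall back on a direct formal power series argument. Since every series is hypergeometric in the summation index, comparing coefficients of $z^n$ turns both sides of \eqref{eq:contiguous-generalization} into finite double sums $\sum_{j+k=n}$ of hypergeometric terms sharing the common factor $(\bfe)_j(\bfe+1)_k/\bigl((\bff)_j(\bff+1)_k\bigr)$, times rational functions of $a,c,d,\kappa,j,k$. The statement must hold for arbitrary $\bfe,\bff$, including the sizes $r$ and $s$. The natural target is therefore a telescoping certificate: the summand difference should equal $T(j+1,k)-T(j,k+1)$, up to the index shift produced by the factor $z$ on the right, with $T$ a rational multiple of the common term. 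Finding this certificate, for instance by a Gosper/Zeilberger-type ansatz with a low-degree rational multiplier in $j$ and $k$, is where I expect most of the work to lie. Once it is found, verifying it is a routine rational-function check.
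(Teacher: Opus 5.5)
Your proposal does not yet prove the statement. The first route rests on a parameter match that is not right, and its decisive step is only asserted. The fallback describes where the work would be without doing it.

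With $b=\kappa+1$, $c\mapsto c-1$ in \eqref{eq:contiguous-product-identity}, only $D$ and $C$ come out exactly. The other four factors do not:
\begin{itemize}
\item The first $(\bfe+1)$-factor becomes $F(c-a+\kappa+1,c+\kappa,c-1,d,\bfe+1;\,a,\kappa+2,c-d+\kappa+1,\bff+1)$. This differs from $B$ also in a denominator: $\kappa+2$ instead of $\kappa+1$.
\item $A$ and $H$ are likewise off in the $\kappa$-slot.
\item The right-hand $\bfe$-factor has $c-2,\kappa+1$ where $E$ has $c-1,\kappa$.
\item All three scalar coefficients change, e.g.\ $a(\kappa+1-d)(c-1-d)(\kappa+c-a)$ instead of $a(c-d)(\kappa-d)(\kappa+c-a)$.
\end{itemize}
So four of the six factors need two-parameter corrections. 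The $z\,d/dz$ relations you propose generate derivative terms and $(\bfe+2,\bff+2)$-series. You offer no mechanism by which these recombine into a single multiple of $AB$. In the fallback, the telescoping certificate is never exhibited, so nothing is verified there either.

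The missing idea is that the bilinear identity follows formally from three \emph{linear} relations among single series. Each is a one-line coefficient comparison:
\begin{gather*}
E-D=\frac{(c-1)(c+\kappa-1)(c+\kappa-2)\,d\,(\bfe)_1}{\kappa\,(c-d+\kappa-1)\,a(a-1)\,(\bff)_1}\,zA,\\
E-C=\frac{(c-a+\kappa)(c+\kappa-1)(c+\kappa-2)(c-1)\,(\bfe)_1}{\kappa\,(c-d+\kappa)(c-d+\kappa-1)(a-1)\,(\bff)_1}\,zB,
\end{gather*}
and $\alpha B-\beta A=\gamma H$, with $\alpha=a(c-d)(\kappa-d)(\kappa+c-a)$, $\beta=d(c-a)(\kappa-a)(\kappa+c-d)$, $\gamma=\kappa c(a-d)(\kappa+c-a-d)$. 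After writing $A_n,B_n$ as rational multiples of $H_n$, the last relation is a quadratic identity in $n$.

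Substituting for $D$ and $C$ turns the left-hand side into $(\alpha B-\beta A)E-\gamma EH$ plus a constant times $zAB$. The first part vanishes by the three-term relation. The constant simplifies to the stated one via $(c-a)(\kappa-a)-(c-d)(\kappa-d)=-(a-d)(c+\kappa-a-d)$.

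Your observation that $z(\bfe)_1/(\bff)_1$ signals a single contiguous step is right: it comes from $E-D$ and $E-C$. Each mismatched factor differs from the common $E$ by $z$ times the \emph{other} factor of its own product. So the nonlinearity of products that you worried about never has to be confronted.

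If you want to keep \eqref{eq:contiguous-product-identity} as the starting point, specialize $b=\kappa$ with $c$ unchanged. Then $A,B,E,H$ and all three scalar coefficients coincide. Only $D$ and $C$ differ from the corresponding factors there, by replacing the numerator/denominator pair $(c,\kappa-1)$ with $(c-1,\kappa)$. These differences are constant multiples of $zA$ and $zB$, so subtracting leaves a multiple of $zAB$. That route works, but it imports \eqref{eq:contiguous-product-identity} and needs $\kappa\neq1$ plus a continuity argument. The paper's argument is self-contained.
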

\begin{proof} 
We claim that 
\begin{align}
&E-D =
\frac{(c-1)(\kappa+c-1)(\kappa+c-2) d (\bfe)_1}
{\kappa\,(\kappa+c-d-1)\,a(a-1)\,(\bff)_1}
\; zA \label{eq:e-minus-d-general}\\
&E-C =
\frac{(c-a+\kappa)(\kappa+c-1)(\kappa+c-2)(c-1) (\bfe)_1}
{\kappa\,(\kappa+c-d)(\kappa+c-d-1)\,(a-1)\,(\bff)_1}
\; zB. \label{eq:e-minus-c-general}
\end{align}
Indeed, simple manipulations with rising factorials yield
\[
\frac{(c-a+\kappa)_n}{(a-1)_n}
=
\frac{c-a+\kappa+n-1}{c-a+\kappa-1}
\cdot
\frac{a+n-1}{a-1}
\cdot
\frac{(c-a+\kappa-1)_n}{(a)_n}.
\]
Therefore, the coefficient at $z^n$ in $E-D$ for $n\ge1$ equals
\begin{align*}
&\frac{(c+\kappa-2)_n(c-1)_n(d)_n(\mathbf e)_n}{(\kappa)_n(c-d+\kappa-1)_n(\mathbf f)_n\,n!}
\left[
\frac{(c-a+\kappa)_n}{(a-1)_n}
-
\frac{(c-a+\kappa-1)_n}{(a)_n}
\right] \\
&=\frac{(c+\kappa-2)_n(c-1)_n(d)_n(\mathbf e)_n}{(\kappa)_n(c-d+\kappa-1)_n(\mathbf f)_n\,n!}\frac{(c-a+\kappa-1)_n}{(a)_n}
\left[
\frac{(c-a+\kappa+n-1)(a+n-1)}{(c-a+\kappa-1)(a-1)} - 1
\right]\\
&=
\frac{(c+\kappa+n-2)}{(c-a+\kappa-1)(a-1)}
\cdot
\frac{(c-a+\kappa-1)_n(c+\kappa-2)_n(c-1)_n(d)_n(\mathbf e)_n}
{(a)_n(\kappa)_n(c-d+\kappa-1)_n(\mathbf f)_n\,(n-1)!}.
\end{align*}
Shifting the summation index in view of 
$(x)_{n+1}=x(x+1)_{n}$, we obtain \eqref{eq:e-minus-d-general}. The same  argument gives  \eqref{eq:e-minus-c-general}. Next, we claim that 
\begin{multline}\label{eq:abh-general}
a(c-d)(\kappa-d)(\kappa+c-a)B-d(c-a)(\kappa-a)(\kappa+c-d) A  \\
= \kappa c(a-d)(\kappa+c-a-d) H
\end{multline}
or, writing $B=\sum_{n=0}^\infty B_n$, $A=\sum_{n=0}^\infty A_n$,  $H=\sum_{n=0}^\infty H_n$, an equivalent claim is
\begin{multline*}
\sum_{n=0}^\infty a(c-d)(\kappa-d)(\kappa+c-a)B_n-\sum_{n=0}^\infty d(c-a)(\kappa-a)(\kappa+c-d) A_n  \\
= \sum_{n=0}^\infty \kappa c(a-d)(\kappa+c-a-d) H_n.
\end{multline*}
 Expressing $A_n$ and $B_n$ in terms of $H_n$ yields
\begin{align*}
&B_n= H_n\,
\frac{\kappa+c-a+n}{\kappa+c-a}\,
\frac{c}{c+n}\,
\frac{a+n}{a}\,
\frac{\kappa}{\kappa+n}, \\
&A_n
=
H_n\,
\frac{c}{c+n}\,
\frac{d+n}{d}\,
\frac{\kappa}{\kappa+n}\,
\frac{\kappa+c-d+n}{\kappa+c-d}.
\end{align*}
Substituting these into the left-hand side of \eqref{eq:abh-general}, we get by an elementary calculation
\begin{align*}
&\text{LHS of \eqref{eq:abh-general}}
=
\sum_{n=0}^\infty \frac{c\kappa{H_n}}{(c+n)(\kappa+n)}
\Big\{
(c-d)(\kappa-d)(a+n)(\kappa+c-a+n)\\
&-(c-a)(\kappa-a)(d+n)(\kappa+c-d+n)\Big\}= \sum_{n=0}^\infty c\kappa(a-d)(\kappa+c-a-d)H_n
\end{align*}
confirming \eqref{eq:abh-general}.  Expressing $D$ from \eqref{eq:e-minus-d-general} and $C$ from \eqref{eq:e-minus-c-general} and substituting these expressions into the left-hand side of \eqref{eq:contiguous-generalization}, we obtain using \eqref{eq:abh-general}
\begin{align*}
&\text{LHS of \eqref{eq:contiguous-generalization}}\!=\!a(c-d)(\kappa-d)(\kappa+c-a)B\bigg[E-\frac{(c-1)(c+\kappa-1)(c+\kappa-2) d (\bfe)_1}
{\kappa\,(c-d+\kappa-1)\,a(a-1)\,(\bff)_1}
\; zA \bigg]\\
&-d(c-a)(\kappa-a)(c-d+\kappa) A \bigg[E-\frac{(c-a+\kappa)(c+\kappa-1)(c+\kappa-2)(c-1) (\bfe)_1}
{\kappa\,(c-d+\kappa)(c-d+\kappa-1)\,(a-1)\,(\bff)_1}
\; zB \bigg]\\
&=\big(a(c-d)(\kappa-d)(c-a+\kappa)B-d(c-a)(\kappa-a)(c-d+\kappa)A\big)E\\
&+\frac{(c-1)(c+\kappa-1)(c+\kappa-2) (\bfe)_1}
{\kappa\,(c-d+\kappa-1)\,(a-1)\,(\bff)_1} \Big[d(c-a)(\kappa-a)(c-a+\kappa)\\
&-d(c-d)(\kappa-d)(c-a+\kappa) \Big]zAB=\kappa c(a-d)(c-a-d+\kappa)EH\\
&-z\frac{d(a-d)(c-a-d+\kappa)(c-1)(c-a+\kappa)(c+\kappa-1)(c+\kappa-2) (\bfe)_1}
{\kappa\,(c-d+\kappa-1)\,(a-1)\,(\bff)_1}AB
\end{align*}
which is precisely \eqref{eq:contiguous-generalization}.
\end{proof}

Another identity of a similar flavor obtained in \cite[Lemma 6.5]{Cetinkaya_Karp_2021} by $q\to1$ limits of  \cite[Corollary~1.2]{GITZ_2015} is the following 
\begin{multline}\label{eq:two-index-contiguous-identity}
a_1(a_2-b_{1}){}_{r+1}F_{r}\left(\begin{array}{c}
 a_1-1,\bfa_{[1]}\\ b_1-1,\bfb_{[1]}
\end{array} \middle| z\right){}_{r+1}F_{r}\left(\begin{array}{c}
 a_2,\bfa_{[2]}+1\\ \bfb+1
\end{array} \middle| z\right)\\-a_2(a_1-b_1){}_{r+1}F_{r}\left(\begin{array}{c}
 a_2-1,\bfa_{[2]}\\ b_1-1,\bfb_{[1]}
\end{array} \middle| z\right) {}_{r+1}F_{r}\left(\begin{array}{c}
 a_1,\bfa_{[1]}+1\\ \bfb+1
\end{array} \middle| z\right)\\
=b_1(a_2-a_1){}_{r+1}F_{r}\left(\begin{array}{c}
 \bfa\\ \bfb
\end{array} \middle| z\right){}_{r+1}F_{r}\left(\begin{array}{c}
 a_1,a_2,\bfa_{[1,2]}+1\\ b_1,\bfb_{[1]}+1
\end{array} \middle| z\right).
\end{multline}

This identity fails when $b_1=1$ as the first two terms become singular. The limiting case of  \eqref{eq:two-index-contiguous-identity} as $b_1\to1$ somewhat surprisingly does not contain digamma functions. Instead, it has the form given in 

\begin{theorem}\label{th:two-index-degenerate-identity}
If $\bfa \in \mathbb{C}^{r+1}, \bfb_{[1]} \in \mathbb{C}^{r-1}$ with $r\ge1$, and no displayed denominator parameter is a non-positive integer, then the following identity holds 
as a formal power series in $z$, and hence wherever the displayed series converge
\begin{align}\label{eq:two-index-degenerate-identity}
&a_1(a_2-1) F\left(\begin{array}{c}
 a_2,\bfa_{[2]}+1\\ 2, \bfb_{[1]}+1
\end{array} \middle| z\right) F\left(\begin{array}{c} 
 a_1-1,\bfa_{[1]}\\ 1, \bfb_{[1]}
\end{array} \middle| z\right)   \notag\\
& -a_2(a_1-1) F\left(\begin{array}{c}
 a_1,\bfa_{[1]}+1\\ 2, \bfb_{[1]}+1
\end{array} \middle| z\right)  F\left(\begin{array}{c} 
 a_2-1,\bfa_{[2]}\\ 1, \bfb_{[1]}
\end{array} \middle| z\right) \notag\\
&-(a_2-a_1) F\left(\begin{array}{c}
 \bfa\\ 1,\bfb_{[1]}
\end{array} \middle| z\right) F\left(\begin{array}{c}
 a_1,a_2,\bfa_{[1,2]}+1\\ 1,\bfb_{[1]}+1
\end{array} \middle| z\right) \notag\\
&=z\frac{(\bfa)_1(a_1-a_2)}{(\bfb_{[1]})_1}    F\left(\begin{array}{c} 
 a_2,\bfa_{[2]}+1\\2, \bfb_{[1]}+1
\end{array} \middle| z\right)  F\left(\begin{array}{c}
 a_1,\bfa_{[1]}+1\\ 2, \bfb_{[1]}+1
\end{array} \middle| z\right). 
\end{align}
Here $(\bfa)_1=a_1a_2 \ldots a_{r+1}$ and $(\bfb_{[1]})_1=b_2 \ldots b_r$.
\end{theorem}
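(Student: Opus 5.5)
I would follow the pattern of the proof of Theorem~\ref{th:contiguous-generalization}: instead of passing to the limit $b_1\to1$, I would verify the identity directly from elementary contiguous-type relations between the six series involved. All of them are power series in $z$, so every step is a comparison of coefficients. Introduce the abbreviations
\[
P=F\!\left(\begin{array}{c} a_2,\bfa_{[2]}+1\\ 2,\bfb_{[1]}+1\end{array}\middle|z\right),\quad
Q=F\!\left(\begin{array}{c} a_1,\bfa_{[1]}+1\\ 2,\bfb_{[1]}+1\end{array}\middle|z\right),\quad
U=F\!\left(\begin{array}{c} \bfa\\ 1,\bfb_{[1]}\end{array}\middle|z\right),
\]
together with $X$ for the series with numerator $(a_1-1,\bfa_{[1]})$, $Y$ for the series with numerator $(a_2-1,\bfa_{[2]})$, and $W$ for the series with numerator $(a_1,a_2,\bfa_{[1,2]}+1)$ and denominator $(1,\bfb_{[1]}+1)$. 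In this notation the claim reads
\[
a_1(a_2-1)PX-a_2(a_1-1)QY-(a_2-a_1)UW=z\,\frac{(\bfa)_1(a_1-a_2)}{(\bfb_{[1]})_1}\,PQ .
\]

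The first step is to establish two difference relations analogous to \eqref{eq:e-minus-d-general} and \eqref{eq:e-minus-c-general}, expressing $U-X$ and $U-Y$ as multiples of $zQ$ and $zP$ respectively.
\begin{itemize}
\item The coefficient of $z^n$ in $U-X$ carries the factor $(a_1)_n-(a_1-1)_n=n\,(a_1)_{n-1}$.
\item Shifting the index as in the proof of Theorem~\ref{th:contiguous-generalization} should give $U-X=z\,\frac{(\bfa_{[1]})_1}{(\bfb_{[1]})_1}\,Q$.
\item The same argument gives $U-Y=z\,\frac{(\bfa_{[2]})_1}{(\bfb_{[1]})_1}\,P$.
\end{itemize}
I will check in both cases that the denominators $(2)_{n-1}$ and $(1)_n$ combine correctly with $n!$: the factor $n$ cancels $n!$ down to $(n-1)!$, and $(1)_n=n\,(1)_{n-1}$ while $(2)_{n-1}=n!$. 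This bookkeeping may produce an extra factor of $n$ that has to be absorbed.

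If that happens, I would adjust. The natural substitutes are relations of the form $U-X=zQ'$ with $Q'$ having denominator $1$, or to use $\theta=z\,d/dz$.

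The second step is the analogue of \eqref{eq:abh-general}. Substituting $X=U-c_1zQ$ and $Y=U-c_2zP$ into the left-hand side gives
\[
U\bigl[a_1(a_2-1)P-a_2(a_1-1)Q-(a_2-a_1)W\bigr]+z\bigl[-a_1(a_2-1)c_1+a_2(a_1-1)c_2\bigr]PQ .
\]
So I need the linear three-term relation
\[
a_1(a_2-1)P-a_2(a_1-1)Q=(a_2-a_1)W.
\]
I will prove it termwise, as in the proof of \eqref{eq:abh-general}, by writing $P_n$ and $Q_n$ as rational multiples of $W_n$. Explicitly, $P_n=W_n\frac{(a_1+n)\,a_2}{a_1(a_2+n)}\cdot\frac{1}{1+n}\cdot(\ldots)$, and one then checks that
\[
a_1(a_2-1)\frac{a_1+n}{a_1}-a_2(a_1-1)\frac{a_2+n}{a_2}
\]
reduces to a multiple of $(a_2-a_1)$, after accounting for the ratio $(1)_n/(2)_n=1/(n+1)$. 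Indeed, $(a_2-1)(a_1+n)-(a_1-1)(a_2+n)=(a_2-a_1)(n+1)$, which cancels the $1/(n+1)$ factor.

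Finally, the coefficient of $zPQ$ is
\[
\frac{-a_1(a_2-1)(\bfa_{[1]})_1+a_2(a_1-1)(\bfa_{[2]})_1}{(\bfb_{[1]})_1}=\frac{(\bfa)_1\,(a_1-a_2)}{(\bfb_{[1]})_1},
\]
using $a_1(\bfa_{[1]})_1=a_2(\bfa_{[2]})_1=(\bfa)_1$. This matches the right-hand side. The main obstacle I expect is the first step: getting the precise constants and denominator shifts in $U-X$ and $U-Y$ right, given that the bottom parameter $1$ shifts to $2$.
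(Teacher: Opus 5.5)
Your proof is correct. It is not the argument the paper gives for this theorem in the Appendix. It is, however, exactly the paper's contiguous-relation proof of Theorem~\ref{th:two-index-generalization}, specialized to $c=1$. Your $U,X,Y,P,Q,W$ are its $E,D,C,B,A,H$, and your two difference relations and your three-term relation are \eqref{eq:e-minus-d}, \eqref{eq:e-minus-c} and \eqref{eq:abh-relation} at $c=1$.

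The Appendix proof works by degeneration instead:
\begin{enumerate}
\item It sets $b_1=1+\epsilon$ in \eqref{eq:two-index-contiguous-identity} and writes the two singular products as $\Gamma(\epsilon)h_i(\epsilon)$ with $h_1(0)=h_2(0)$.
\item It extracts the finite part $h_1'(0)-h_2'(0)$.
\item The digamma sums then collapse, via $\psi(k+2)-\psi(k+1)=1/(k+1)$ together with an index shift that turns each regularized series with lower parameter $0$ into $z$ times one with lower parameter $2$.
\end{enumerate}
Your route is shorter and purely coefficientwise, so the formal-power-series claim is immediate. It needs no $\epsilon$-expansions and no special care with the vanishing $1/\Gamma(\epsilon)$ term at $k=0$, and it extends at once to an arbitrary lower parameter $c$. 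The degeneration argument buys something different: it explains where the identity comes from and why no digamma terms survive the limit $b_1\to1$.

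Two small points about the write-up.

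First, the hedge in your first step is unnecessary. The $z^n$-coefficient of $U-X$ is $n(a_1)_{n-1}(\bfa_{[1]})_n/\bigl((n!)^2(\bfb_{[1]})_n\bigr)$. Since $n/(n!)^2=1/\bigl((2)_{n-1}(n-1)!\bigr)$ and $(x)_n=x(x+1)_{n-1}$, this is exactly $\frac{(\bfa_{[1]})_1}{(\bfb_{[1]})_1}$ times the $z^{n-1}$-coefficient of $Q$. No stray factor of $n$ appears, and the same holds for $U-Y$, so you should simply commit to these two relations.

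Second, your displayed formula $P_n=W_n\frac{(a_1+n)a_2}{a_1(a_2+n)}\cdot\frac{1}{1+n}(\ldots)$ is wrong; what you wrote is $P_n/Q_n$ times $1/(n+1)$. The $a_2$-Pochhammer symbols in $P$ and $W$ cancel, giving $a_1P_n=\frac{a_1+n}{n+1}W_n$ and $a_2Q_n=\frac{a_2+n}{n+1}W_n$. These are the ratios your subsequent check actually uses, so the argument is unaffected. In this multiplied-out form they also avoid dividing by $a_1$ or $a_2$.
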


\begin{proof}
See the Appendix.
\end{proof}

The identity \eqref{eq:two-index-degenerate-identity} presented above corresponds to the special case $c=1$ of a more general result. Replacing the lower parameters $1$ and $2$ by generic  $c$ and $c+1$ suggests the following generalization which we state as \eqref{eq:two-index-generalization}. For a proof, we again employ contiguous relations.
\begin{theorem}\label{th:two-index-generalization}
Suppose $r\ge2$ and $s\ge0$ are integers.
Let $a_1,\dots,a_r,b_1,\dots,b_s,c\in\mathbb C$, with $c,b_1,\dots,b_s\notin\{0,-1,-2,\dots\}$. When $s=0$, parameter vectors and products indexed by the $b_j$'s are understood to be empty. Define
\begin{align*}
A&=F\left(\begin{array}{c}
 a_1,\bfa_{[1]}+1\\ c+1, \bfb +1
\end{array} \middle| z\right), ~~ B=F\left(\begin{array}{c}
 a_2,\bfa_{[2]}+1\\ c+1, \bfb +1
\end{array} \middle| z\right), ~~ 
C=F\left(\begin{array}{c} 
 a_2-1,\bfa_{[2]}\\ c, \bfb 
\end{array} \middle| z\right)
\\
D&=F\left(\begin{array}{c} 
 a_1-1,\bfa_{[1]}\\ c, \bfb 
\end{array} \middle| z\right),  \quad 
E=F\left(\begin{array}{c}
 \bfa\\ c,\bfb 
\end{array} \middle| z\right),\quad 
H=F\left(\begin{array}{c}
 a_1,a_2,\bfa_{[1,2]}+1\\ c,\bfb +1
\end{array} \middle| z\right).
\end{align*}
Then the following identity holds as a formal power series in $z$, and hence wherever the displayed series converge:
\begin{equation}\label{eq:two-index-generalization}
a_1(a_2-c)\,BD-a_2(a_1-c)\,A\,C
-
c(a_2-a_1)\,EH
=z\,\frac{a_1a_2\cdots a_r\,(a_1-a_2)}{c\,b_1\cdots b_s}\,AB.
\end{equation}
\end{theorem}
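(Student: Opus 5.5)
The plan is to follow the contiguous-relation argument used for Theorem~\ref{th:contiguous-generalization}. All six series are related by elementary shifts, so \eqref{eq:two-index-generalization} should follow from two difference identities and one three-term linear relation, all checked coefficientwise as formal power series. Throughout, write $P=\dfrac{a_1a_2\cdots a_r}{c\,b_1\cdots b_s}$.

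\emph{Step 1: the differences $E-D$ and $E-C$.} The series $E$ and $D$ differ only in the first upper parameter ($a_1$ versus $a_1-1$). Using $(a_1)_n-(a_1-1)_n=n\,(a_1)_{n-1}$, the coefficient of $z^n$ in $E-D$ is
\[
\frac{n\,(a_1)_{n-1}(\bfa_{[1]})_n}{(c)_n(\bfb)_n\,n!}.
\]
Shifting $n=m+1$ and using $(x)_{m+1}=x(x+1)_m$ gives
\[
E-D=z\,\frac{a_2\cdots a_r}{c\,b_1\cdots b_s}\,A .
\]
By the symmetric computation in $a_2$,
\[
E-C=z\,\frac{a_1a_3\cdots a_r}{c\,b_1\cdots b_s}\,B .
\]
Equivalently, $E-D=zPA/a_1$ and $E-C=zPB/a_2$.

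\emph{Step 2: the linear relation, the analogue of \eqref{eq:abh-general}.} I claim
\[
a_1(a_2-c)\,B-a_2(a_1-c)\,A=c(a_2-a_1)\,H .
\]
To verify it, express the $n$-th coefficients of $A$ and $B$ through that of $H$:
\[
A_n=H_n\,\frac{a_2+n}{a_2}\,\frac{c}{c+n},\qquad B_n=H_n\,\frac{a_1+n}{a_1}\,\frac{c}{c+n}.
\]
The left-hand side then has coefficient
\[
H_n\,\frac{c}{c+n}\bigl[(a_2-c)(a_1+n)-(a_1-c)(a_2+n)\bigr]=H_n\,\frac{c}{c+n}\,(a_2-a_1)(c+n)=c(a_2-a_1)H_n ,
\]
as required.

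\emph{Step 3: assembly.} Substitute $D=E-zPA/a_1$ and $C=E-zPB/a_2$ into the left-hand side of \eqref{eq:two-index-generalization}.

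The $E$-terms combine through Step 2 into $c(a_2-a1)EH$, which cancels the third term on the left. The cross terms are
\[
zAB\,P\bigl[-(a_2-c)+(a_1-c)\bigr]=zP(a_1-a_2)\,AB ,
\]
which is exactly the right-hand side of \eqref{eq:two-index-generalization}.

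Since everything is an identity of formal power series, the analytic statement follows wherever the series converge. The hypothesis $c,b_j\notin\{0,-1,\dots\}$ guarantees that all displayed series and all divisions by $c$ and the $b_j$ are legitimate. The only point needing care is the case where some $a_i$ vanish, which would make the division by $a_1$ or $a_2$ in Step 1 illegitimate. I would handle it by working with $a_1$ and $a_2$ generic and then appealing to polynomial continuity of each coefficient in the parameters.

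There is no real obstacle. The only step that is not purely mechanical is guessing the correct three-term relation in Step 2, and the $n=0$ coefficient check ($a_1a_2-a_1c-a_1a_2+a_2c=c(a_2-a_1)$) already pins down its normalization.
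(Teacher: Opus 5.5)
Your proof is correct and matches the paper's argument step for step: the same two difference identities $E-D=z\,\frac{a_2\cdots a_r}{c\,b_1\cdots b_s}A$ and $E-C=z\,\frac{a_1a_3\cdots a_r}{c\,b_1\cdots b_s}B$, the same three-term relation $a_1(a_2-c)B-a_2(a_1-c)A=c(a_2-a_1)H$ checked coefficientwise, and the same substitution for $D$ and $C$. The only cosmetic difference is that the paper verifies the three-term relation by factoring out the common product $\frac{(a_1)_k(a_2)_k(a_3+1)_k\cdots(a_r+1)_k}{(c+1)_k(b_1+1)_k\cdots(b_s+1)_k\,k!}$ instead of dividing by $a_1$ and $a_2$. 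Done that way, your genericity and continuity remark is not needed.
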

\begin{proof}
We first record two elementary contiguous identities. We claim that
\begin{equation}\label{eq:e-minus-d}
E-D=\frac{a_2a_3\cdots a_r}{c\,b_1b_2\cdots b_s}\,zA
\end{equation}
and
\begin{equation}\label{eq:e-minus-c}
E-C=\frac{a_1a_3\cdots a_r}{c\,b_1b_2\cdots b_s}\,zB.
\end{equation}
To prove \eqref{eq:e-minus-d}, \eqref{eq:e-minus-c}, compare the coefficients at $z^k$ on the left and right-hand sides, similarly to the proof of Theorem~\ref{th:contiguous-generalization}.
Next, we assert that 
\begin{equation}\label{eq:abh-relation}
a_1(a_2-c)\,B-a_2(a_1-c)\,A=c(a_2-a_1)\,H.
\end{equation}
Again, compare coefficients at powers of $z$. The coefficient at $z^k$ on the left-hand side is
\begin{align*}
&\frac{(a_1)_k(a_2)_k(a_3+1)_k\cdots(a_r+1)_k}
{(c+1)_k(b_1+1)_k\cdots(b_s+1)_k\,k!}
\Bigl[(a_2-c)(a_1+k)-(a_1-c)(a_2+k)\Bigr]\\
&=c(a_2-a_1)\,
\frac{(a_1)_k(a_2)_k(a_3+1)_k\cdots(a_r+1)_k}
{(c)_k(b_1+1)_k\cdots(b_s+1)_k\,k!},
\end{align*}
which coincides with the coefficient at $z^k$ of the function $c(a_2-a_1)H$ which proves \eqref{eq:abh-relation}.
Replacing $D$ and $C$ in \eqref{eq:two-index-generalization} by their expressions from \eqref{eq:e-minus-d} and \eqref{eq:e-minus-c}, respectively,  we rewrite
the left-hand side of \eqref{eq:two-index-generalization} as follows
\begin{multline*}
a_1(a_2-c)BD-a_2(a_1-c)AC 
\\=
a_1(a_2-c)B\left(E-\frac{a_2\cdots a_r}{c\,b_1\cdots b_s}zA\right)
-a_2(a_1-c)A\left(E-\frac{a_1a_3\cdots a_r}{c\,b_1\cdots b_s}zB\right)
\\=
\bigl(a_1(a_2-c)B-a_2(a_1-c)A\bigr)E +z\,\frac{a_1a_2\cdots a_r}{c\,b_1\cdots b_s}
\bigl((a_1-c)-(a_2-c)\bigr)AB \\
=
\bigl(a_1(a_2-c)B-a_2(a_1-c)A\bigr)E
+z\,\frac{a_1a_2\cdots a_r\,(a_1-a_2)}{c\,b_1\cdots b_s}AB.
\end{multline*}
Now substitute \eqref{eq:abh-relation}; this yields exactly \eqref{eq:two-index-generalization}.
\end{proof}

\begin{corollary}
If $r=2$ and $s=0$, then  identity \eqref{eq:two-index-generalization} becomes
\begin{align*}
&a_1(a_2-c) {}_2F_{1}\left(\begin{array}{c}
 a_2,a_1+1\\ c+1
\end{array} \middle| z\right) {}_2F_{1}\left(\begin{array}{c} 
 a_1-1,a_2\\ c
\end{array} \middle| z\right)   \notag\\
& -a_2(a_1-c) {}_2F_{1}\left(\begin{array}{c}
 a_1,a_2+1\\ c+1
\end{array} \middle| z\right)  {}_2F_{1}\left(\begin{array}{c} 
 a_2-1,a_1\\ c
\end{array} \middle| z\right)-c(a_2-a_1) \bigg[{}_2F_{1}\left(\begin{array}{c}
 a_1,a_2\\ c
\end{array} \middle| z\right)\bigg]^2 \notag\\
&= \frac{za_1 a_2(a_1-a_2)}{c}  {}_2F_{1}\left(\begin{array}{c} 
 a_2,a_1+1\\ c+1
\end{array} \middle| z\right)  {}_2F_{1}\left(\begin{array}{c}
 a_1,a_2+1\\ c+1
\end{array} \middle| z\right).  \notag
\end{align*}
\end{corollary}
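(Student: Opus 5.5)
This is a direct specialization of Theorem~\ref{th:two-index-generalization}. I will take $r=2$ and $s=0$ there and check that each of the six functions $A,B,C,D,E,H$ reduces to the ${}_2F_1$ shown in the corollary.

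With $\bfa=(a_1,a_2)$ we have $\bfa_{[1]}=(a_2)$, $\bfa_{[2]}=(a_1)$ and $\bfa_{[1,2]}$ empty, while $\bfb$ is empty. The reductions are then:
\begin{itemize}
\item $A$ is the ${}_2F_1$ with upper parameters $a_1,a_2+1$ and lower parameter $c+1$.
\item $B$ has upper parameters $a_2,a_1+1$ and lower parameter $c+1$.
\item $C$ has upper parameters $a_2-1,a_1$ and lower parameter $c$.
\item $D$ has upper parameters $a_1-1,a_2$ and lower parameter $c$.
\item $E$ has upper parameters $a_1,a_2$ and lower parameter $c$.
\item $H$ has upper parameters $a_1,a_2$ (the vector $\bfa_{[1,2]}+1$ being empty) and lower parameter $c$, since $\bfb+1$ is empty. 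Hence $H=E$.
\end{itemize}
Because $H=E$, the third term $c(a_2-a_1)EH$ becomes $c(a_2-a_1)$ times the square of ${}_2F_1(a_1,a_2;c;z)$.

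On the right-hand side, with $s=0$ the product $b_1\cdots b_s$ is empty and equals $1$. The factor $a_1\cdots a_r$ is $a_1a_2$, so the right-hand side is $z\,a_1a_2(a_1-a_2)AB/c$. Matching terms, the first product $BD$ and the second product $AC$ are exactly those displayed in the corollary, with the displayed coefficients.

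The only point needing care is the convention on empty parameter vectors. Theorem~\ref{th:two-index-generalization} explicitly allows $s=0$ with these conventions, and its proof (the coefficient comparisons in \eqref{eq:e-minus-d}, \eqref{eq:e-minus-c}, \eqref{eq:abh-relation}) never uses nonemptiness of $\bfa_{[1,2]}$ or $\bfb$. So no real obstacle is expected. The proof reduces to this substitution together with the remark that $H=E$.
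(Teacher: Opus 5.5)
Your proposal is correct and matches the paper, which states this corollary without separate proof as the case $r=2$, $s=0$ of Theorem~\ref{th:two-index-generalization}; there $\bfa_{[1,2]}$ and $\bfb$ are empty, so $H=E$ and $b_1\cdots b_s=1$. Your matching of $A,B,C,D$ with the displayed ${}_2F_1$'s is right. You are also right that the coefficient comparisons behind \eqref{eq:e-minus-d}, \eqref{eq:e-minus-c} and \eqref{eq:abh-relation} remain valid when these parameter vectors are empty.
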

We note a curious identity for a sum of products of the Gauss functions ${}_2F_1$  discovered recently in \cite{Aptekarev_Dyachenko_Lysov_2025} somewhat similar in flavor to the above corollary.  It played a prominent role in establishing the perfectness of the Meixner--Sorokin system of weights.

\section*{Acknowledgment}

S.K.: This work was supported by the Moscow Center of Fundamental and Applied Mathematics, Agreement with the Ministry of Science and Higher Education of the Russian Federation, No. 075-15-2025-346.

\bibliographystyle{siam} 
\bibliography{references}

\phantomsection
\section*{Appendix}
\addcontentsline{toc}{section}{Appendix}

\begin{proof}[Proof of Corollary~\ref{cr:euler-digamma-identity}]
Set $p=0$, $\bfm=\bfn=\mathbf0$, and $r=2$ in
Theorem~\ref{th:duality-degeneration}. Then $e=-1$, the sum involving $\Delta_j$ is empty,
$V_3=0$, and $\cA$ is the empty vector. Put
\begin{align*}
F_+(z)&={}_2F_1\!\left(\begin{array}{c}b_1,b_2\\1\end{array}\middle|z\right),
F_-(z)={}_2F_1\!\left(\begin{array}{c}1-b_1,1-b_2\\1\end{array}\middle|z\right),\\
S_+(z)&=\sum_{k=0}^\infty\frac{(b_1)_k(b_2)_k}{(k!)^2}
 \bigl(2\psi(1+k)-\psi(b_1+k)-\psi(b_2+k)\bigr)z^k,\\
S_-(z)&=\sum_{k=0}^\infty\frac{(1-b_1)_k(1-b_2)_k}{(k!)^2}
 \bigl(\psi(1-b_1+k)+\psi(1-b_2+k)-2\psi(1+k)\bigr)z^k.
\end{align*}
Substitution of the four functions $\Psi_1,\ldots,\Psi_4$ from
Theorem~\ref{th:duality-degeneration}, followed by \(\Gamma(x+k)=\Gamma(x)(x)_k\), reduces
\eqref{eq:duality-degeneration} to
\[
 F_+(z)S_-(z)+F_-(z)S_+(z)
 =\pi\bigl(\cot\pi b_1+\cot\pi b_2\bigr)F_+(z)F_-(z).
\]
Euler's transformation gives
$F_-(z)=(1-z)^{b_1+b_2-1}F_+(z)$. Dividing the last identity by
$F_-(z)$ yields \eqref{eq:euler-digamma-identity}; the result then extends across
removable zeros by analytic continuation. \end{proof}

\bigskip

\begin{proof}[Proof of Theorem~\ref{th:contiguous-degenerate-identity}] The identity \eqref{eq:contiguous-product-identity} is valid for whenever $b \neq 1$. Our goal is to compute the limit as $b\to1$. To this end, substituting $b =1+\epsilon$ in \eqref{eq:contiguous-product-identity} and using \eqref{eq:regularized-hypergeometric}, we obtain
\begin{align}\label{eq:contiguous-identity-regularized}
 &\frac{a(1+\epsilon-d)(c-d)(1+\epsilon+c-a)\Gamma(a)\Gamma(\epsilon)\Gamma(\epsilon+c-d)\Gamma(a)\Gamma(\epsilon+2)\Gamma(\epsilon+c-d+2)}{\Gamma(\epsilon+c-a)\Gamma(\epsilon+c-1)\Gamma(c)\Gamma(d)\Gamma(\epsilon+c-a+2)\Gamma(\epsilon+c+1)\Gamma(c)\Gamma(d)} \notag\\
& \frac{\Gamma(\bff)\Gamma(\bff+1)}{\Gamma(\bfe)\Gamma(\bfe+1)} \phi\!\left(\!\!\begin{array}{c}
 \epsilon+c-a,\epsilon+c-1,c,d,\bfe\\ a,\epsilon,\epsilon+c-d,\bff
\end{array} \middle| z\!\right) \phi\!\left(\!\!\begin{array}{c}
 \epsilon+c-a+2,1+\epsilon+c,c,d,\bfe+1\\ a,\epsilon+2,\epsilon+c-d+2,\bff+1
\end{array} \middle| z\!\right) \notag\\
&-\frac{d(1+\epsilon-a)(c-a)(1+\epsilon+c-d)\Gamma(\epsilon)\Gamma(1+\epsilon+c-d)\Gamma(\epsilon+2)\Gamma(1+\epsilon+c-d)}{\Gamma(1+\epsilon+c-a)\Gamma(\epsilon+c-1)\Gamma(c)\Gamma(1+\epsilon+c-a)\Gamma(1+\epsilon+c)\Gamma(c)}\notag \\
&\frac{\Gamma(a-1)\Gamma(a+1)\Gamma(\bff)\Gamma(\bff+1)}{\Gamma(d+1)\Gamma(d-1)\Gamma(\bfe)\Gamma(\bfe+1)}\phi\left(\begin{array}{c}
 1+\epsilon+c-a,\epsilon+c-1,c,d-1,\bfe\\ a-1,\epsilon,1+\epsilon+c-d,\bff
\end{array} \middle| z\right) \notag\\
&\phi\left(\begin{array}{c}
 1+\epsilon+c-a,1+\epsilon+c,c,d+1,\bfe+1\\ a+1,\epsilon+2,1+\epsilon+c-d,\bff+1
\end{array} \middle| z\right)=c(1+\epsilon)(a-d)(1+\epsilon+c-a-d)  \notag\\
&F\!\left(\!\!\begin{array}{c}
 1+\epsilon+c-a,\epsilon+c-1,c-1,d,\bfe\\ a-1,1+\epsilon,\epsilon+c-d,\bff
\end{array} \middle| z\!\right) F\!\left(\!\!\begin{array}{c}
 1+\epsilon+c-a,1+\epsilon+c,c+1,d,\bfe+1\\ a+1,1+\epsilon,\epsilon+c-d+2,\bff+1
\end{array} \middle| z\!\right).  
\end{align}
As above, we can define
\begin{align*}
&f_1(\epsilon)=\Gamma(\epsilon) h_1(\epsilon)=\Gamma(\epsilon)g_1(\epsilon)\phi\left(\begin{array}{c}
 \epsilon+c-a,\epsilon+c-1,c,d,\bfe\\ a,\epsilon,\epsilon+c-d,\bff
\end{array} \middle| z\right) \\
&\hspace{3cm}\times\phi\left(\begin{array}{c}
 \epsilon+c-a+2,1+\epsilon+c,c,d,\bfe+1\\ a,\epsilon+2,\epsilon+c-d+2,\bff+1
\end{array} \middle| z\right),\\
&f_2(\epsilon)=\Gamma(\epsilon) h_2(\epsilon)=\Gamma(\epsilon)g_2(\epsilon)\phi\left(\begin{array}{c}
 1+\epsilon+c-a,\epsilon+c-1,c,d-1,\bfe\\ a-1,\epsilon,1+\epsilon+c-d,\bff
\end{array} \middle| z\right) \\
&\hspace{3cm}\times\phi\left(\begin{array}{c}
 1+\epsilon+c-a,1+\epsilon+c,c,d+1,\bfe+1\\ a+1,\epsilon+2,1+\epsilon+c-d,\bff+1
\end{array} \middle| z\right),
\end{align*}
where
\begin{align*}
&g_1(\epsilon)= \\
&\frac{a(c-d)\Gamma^2(a)\Gamma(\bff)\Gamma(\bff+1)(1+\epsilon-d)(1+\epsilon+c-a)\Gamma(\epsilon+c-d)\Gamma(\epsilon+2)\Gamma(\epsilon+c-d+2)}{\Gamma^2(c)\Gamma^2(d)\Gamma(\bfe)\Gamma(\bfe+1)\Gamma(\epsilon+c-a)\Gamma(\epsilon+c-1)\Gamma(\epsilon+c-a+2)\Gamma(\epsilon+c+1)},\\
&g_2(\epsilon)= \\
&\frac{d(c-a)\Gamma(a-1)\Gamma(a+1)\Gamma(\bff)\Gamma(\bff+1)(1+\epsilon-a)(1+\epsilon+c-d)\Gamma(\epsilon+2)\Gamma^2(1+\epsilon+c-d)}{\Gamma^2(c)\Gamma(d-1)\Gamma(d+1)\Gamma(\bfe)\Gamma(\bfe+1)\Gamma(\epsilon+c-1)\Gamma(1+\epsilon+c)\Gamma^2(1+\epsilon+c-a)}.
\end{align*} 
Again, employing Taylor expansion for $h_1(\epsilon)$ and $h_2(\epsilon)$, we obtain
\begin{align}\label{eq:contiguous-degeneration-limit}
&f_1(\epsilon)-f_2(\epsilon)= \Gamma(\epsilon) [h_1(\epsilon)-h_2(\epsilon)]= \Gamma(\epsilon) [h_1(0)+\epsilon h'_1(0)-h_2(0)-\epsilon h'_2(0)+O(\epsilon^2)] \notag\\
& = \epsilon \Gamma(\epsilon) [h'_1(0)-h'_2(0)+O(\epsilon)] = \Gamma(\epsilon+1) [h'_1(0)-h'_2(0)+O(\epsilon)] \notag\\
&\rightarrow [h'_1(0)-h'_2(0)] ~~{\rm as}~~ \epsilon \rightarrow 0,
\end{align}
where the last equality follows because $h_2(0)=h_1(0)$ which can be established by shifting the index of summation similarly to the proof of Theorem~\ref{th:duality-degeneration}. For $h'_1(0)$, we use the relation \eqref{eq:reciprocal-gamma-derivative}. Thus, we get
\begin{align*}
&h'_1(0) =\left[ \frac{\partial}{\partial\epsilon} h_1(\epsilon)  \right]_{\epsilon = 0}= g'_1(0)\phi\!\left(\!\!\begin{array}{c}
 c-a+2,1+c,c,d,\bfe+1\\ a,2,c-d+2,\bff+1
\end{array} \middle| z\!\right)\phi\!\left(\!\!\begin{array}{c}
 c-a,c-1,c,d,\bfe\\ a,0,c-d,\bff
\end{array} \middle| z\!\right) \\
& +g_1(0)\phi\!\left(\!\!\begin{array}{c}
 c-a+2,1+c,c,d,\bfe+1\\ a,2,c-d+2,\bff+1
\end{array} \middle| z\!\right)\bigg\{ \frac{\Gamma(c-a)\Gamma(c-1)\Gamma(c)\Gamma(d)\Gamma(\bfe)}{\Gamma(a)\Gamma(c-d)\Gamma(\bff)}\\
& + \sum_{k=1}^{\infty} \frac{\Gamma(c-a+k)\Gamma(c-1+k)\Gamma(c+k)\Gamma(d+k)\Gamma(\bfe+k) z^k}{\Gamma(a+k)\Gamma(k)\Gamma(c-d+k)\Gamma(\bff+k)k!}  \\
& \times\big[\psi(c\!-\!a\!+\!k)+\psi(c\!-\!1\!+\!k)-\psi(k)-\psi(c\!-\!d\!+\!k)\big]\bigg\}+g_1(0)\phi\!\left(\!\!\begin{array}{c}
 c-a,c-1,c,d,\bfe\\ a,0,c-d,\bff
\end{array} \middle| z\!\right)\\
&\times\sum_{k=0}^{\infty} \frac{\Gamma(c-a+2+k)\Gamma(c+1+k)\Gamma(c+k)\Gamma(d+k)\Gamma(\bfe+1+k)z^k}{\Gamma(a+k)\Gamma(2+k)\Gamma(c-d+2+k)\Gamma(\bff+1+k)k!} \\
& \times\big[\psi(c-a+k+2)+\psi(c-1+k+2)-\psi(k+2)-\psi(c-d+k+2)\big],
\end{align*} 
where $g_1:=g_1(0)$ and $g'_1(0)=g_1\widehat{g}_1$, with $g_1$ and $\widehat{g}_1$ given by \eqref{eq:g1-value} and \eqref{eq:hat-g1-value}, respectively.
\begin{align}
&g_1 = \frac{a(c-d)\Gamma^2(a)\Gamma(\bff)\Gamma(\bff+1)(1-d)(1+c-a)\Gamma(c-d)\Gamma(c-d+2)}{\Gamma^2(c)\Gamma^2(d)\Gamma(\bfe)\Gamma(\bfe+1)\Gamma(c-a)\Gamma(c-1)\Gamma(c-a+2)\Gamma(c+1)} \label{eq:g1-value},\\
&\widehat{g}_1=\frac{1}{1-d} + \frac{1}{1+c-a}+\psi(c-d)+\psi(2)+\psi(c-d+2) \notag\\
&\hspace{4cm }-\psi(c-a)-\psi(c-1)-\psi(c-a+2)-\psi(c+1)  \label{eq:hat-g1-value}.
\end{align}

Then we compute $h'_2(0)$ as follows:
\begin{align*}
&h'_2(0) = \left[ \frac{\partial}{\partial\epsilon} h_2(\epsilon)  \right]_{\epsilon = 0}=g'_2(0)\phi\left(\begin{array}{c}
 1+c-a,1+c,c,d+1,\bfe+1\\ a+1,2,1+c-d,\bff+1
\end{array} \middle| z\right) \times \\
&\phi\left(\begin{array}{c}
 1+c-a,c-1,c,d-1,\bfe\\ a-1,0,1+c-d,\bff
\end{array} \middle| z\right)+g_2(0)\phi\left(\begin{array}{c}
 1+c-a,1+c,c,d+1,\bfe+1\\ a+1,2,1+c-d,\bff+1
\end{array} \middle| z\right)\\
&\bigg[ \frac{\Gamma(c-a+1)\Gamma(c-1)\Gamma(c)\Gamma(d-1)\Gamma(\bfe)}{\Gamma(a-1)\Gamma(c-d+1)\Gamma(\bff)}+\\
&\sum_{k=1}^{\infty} \frac{\Gamma(1+c-a+k)\Gamma(c-1+k)\Gamma(c+k)\Gamma(d-1+k)\Gamma(\bfe+k) z^k}{\Gamma(a-1+k)\Gamma(k)\Gamma(1+c-d+k)\Gamma(\bff+k)k!}  \\
& \times\big[\psi(1+c-a+k)+\psi(c-1+k)-\psi(k)-\psi(1+c-d+k)\big]\bigg]\\
&+g_2(0)\phi\left(\begin{array}{c}
 1+c-a,c-1,c,d-1,\bfe\\ a-1,0,1+c-d,\bff
\end{array} \middle| z\right)\\
&\sum_{k=0}^{\infty} \frac{\Gamma(1+c-a+k)\Gamma(1+c+k)\Gamma(c+k)\Gamma(d+1+k)\Gamma(\bfe+1+k) z^k}{\Gamma(a+1+k)\Gamma(2+k)\Gamma(1+c-d+k)\Gamma(\bff+1+k)k!} \\
& \times\big[\psi(1+c-a+k)+\psi(c+1+k)-\psi(k+2)-\psi(1+c-d+k)\big],
\end{align*}
where $g_2:=g_2(0)$ and $g'_2(0)=g_2\widehat{g}_2$, with $g_2$ and $\widehat{g}_2$ given by \eqref{eq:g2-value} and \eqref{eq:hat-g2-value}, respectively.
\begin{align}
&g_2 = \frac{d(c-a)\Gamma(a-1)\Gamma(a+1)\Gamma(\bff)\Gamma(\bff+1)(1-a)(1+c-d)\Gamma^2(1+c-d)}{\Gamma^2(c)\Gamma(d-1)\Gamma(d+1)\Gamma(\bfe)\Gamma(\bfe+1)\Gamma^2(1+c-a)\Gamma(c-1)\Gamma(1+c)} ,\label{eq:g2-value} \\
&\widehat{g}_2 = \frac{1}{1-a} + \frac{1}{1+c-d}+ 2\psi(1+c-d) + \psi(2)- \psi(c-1)- \psi(c+1)- 2\psi(1+c-a) \label{eq:hat-g2-value}.
\end{align}
Next, we calculate 
\begin{align*}
&h'_1(0) -h'_2(0) = g_1 \Bigg[\widehat{g}_1 \phi\left(\begin{array}{c}
 c-a+2,1+c,c,d,\bfe+1\\ a,2,c-d+2,\bff+1
\end{array} \middle| z\right)\phi\left(\begin{array}{c}
 c-a,c-1,c,d,\bfe\\ a,0,c-d,\bff
\end{array} \middle| z\right) \\
& +\phi\left(\begin{array}{c}
 c-a+2,1+c,c,d,\bfe+1\\ a,2,c-d+2,\bff+1
\end{array} \middle| z\right)\bigg[ \frac{\Gamma(c-a)\Gamma(c-1)\Gamma(c)\Gamma(d)\Gamma(\bfe)}{\Gamma(a)\Gamma(c-d)\Gamma(\bff)}+\sum_{k=1}^{\infty} \cU_1(k) z^k \bigg]\\
&  +\phi\left(\begin{array}{c}
 c-a,c-1,c,d,\bfe\\ a,0,c-d,\bff
\end{array} \middle| z\right)\sum_{k=0}^{\infty} \cV_1(k) z^k\Bigg]\\
&-g_2 \Bigg[\widehat{g}_2 \phi\left(\begin{array}{c}
 1+c-a,1+c,c,d+1,\bfe+1\\ a+1,2,1+c-d,\bff+1
\end{array} \middle| z\right)\phi\left(\begin{array}{c}
 1+c-a,c-1,c,d-1,\bfe\\ a-1,0,1+c-d,\bff
\end{array} \middle| z\right) \\
&+\phi\left(\begin{array}{c}
 1+c-a,1+c,c,d+1,\bfe+1\\ a+1,2,1+c-d,\bff+1
\end{array} \middle| z\right) \bigg[ \frac{\Gamma(c-a+1)\Gamma(c-1)\Gamma(c)\Gamma(d-1)\Gamma(\bfe)}{\Gamma(a-1)\Gamma(c-d+1)\Gamma(\bff)}+\\
&\sum_{k=1}^{\infty} \cU_2(k) z^k \bigg]+\phi\left(\begin{array}{c}
 1+c-a,c-1,c,d-1,\bfe\\ a-1,0,1+c-d,\bff
\end{array} \middle| z\right)\sum_{k=0}^{\infty} \cV_2(k) z^k \Bigg],
\end{align*}
Here $\cU_1,\cV_1$ correspond to the two differentiated hypergeometric factors in $h_1$, while $\cU_2,\cV_2$ correspond to those in $h_2$; explicitly,
\begin{align*}
&\cU_1(k) = \frac{\Gamma(c-a+k)\Gamma(c-1+k)\Gamma(c+k)\Gamma(d+k)\Gamma(\bfe+k) }{\Gamma(a+k)\Gamma(k)\Gamma(c-d+k)\Gamma(\bff+k)k!}  \notag \\
&\hspace{4cm }\times\big[\psi(c-a+k)+\psi(c-1+k)-\psi(k)-\psi(c-d+k)\big], \\
&\cV_1(k)= \frac{\Gamma(c-a+2+k)\Gamma(c+1+k)\Gamma(c+k)\Gamma(d+k)\Gamma(\bfe+1+k) }{\Gamma(a+k)\Gamma(2+k)\Gamma(c-d+2+k)\Gamma(\bff+1+k)k!} \notag \\
&\hspace{2cm }\times\big[\psi(c-a+k+2)+\psi(c-1+k+2)-\psi(k+2)-\psi(c-d+k+2)\big], \\
&\cU_2(k) = \frac{\Gamma(1+c-a+k)\Gamma(c-1+k)\Gamma(c+k)\Gamma(d-1+k)\Gamma(\bfe+k) }{\Gamma(a-1+k)\Gamma(k)\Gamma(1+c-d+k)\Gamma(\bff+k)k!} \notag \\
&\hspace{3cm }\times \big[\psi(1+c-a+k)+\psi(c-1+k)-\psi(k)-\psi(1+c-d+k)\big], \\
&\cV_2(k) =  \frac{\Gamma(1+c-a+k)\Gamma(1+c+k)\Gamma(c+k)\Gamma(d+1+k)\Gamma(\bfe+1+k)}{\Gamma(a+1+k)\Gamma(2+k)\Gamma(1+c-d+k)\Gamma(\bff+1+k)k!} \notag\\
&\hspace{2cm }\times\big[\psi(1+c-a+k)+\psi(c+1+k)-\psi(k+2)-\psi(1+c-d+k)\big]. 
\end{align*}
Comparing \eqref{eq:g1-value} and \eqref{eq:g2-value}, we see that $g_1=g_2$; we denote their common value by $g$. Upon substituting the expression for $h'_1(0)$ and $h'_2(0)$ in \eqref{eq:contiguous-degeneration-limit} and then finally in \eqref{eq:contiguous-identity-regularized}, we obtain
\begin{align}\label{eq:contiguous-identity-intermediate}
& g\phi\!\left(\!\!\begin{array}{c}
 c-a+2,1+c,c,d,\bfe+1\\ a,2,c-d+2,\bff+1
\end{array} \middle| z\!\right)\sum_{k=1}^{\infty} \cU_1(k) z^k+ g\phi\!\left(\!\!\begin{array}{c}
 c-a,c-1,c,d,\bfe\\ a,0,c-d,\bff
\end{array} \middle| z\!\right)\sum_{k=0}^{\infty} \cV_1(k) z^k \notag \\
&-g\phi\!\left(\!\!\begin{array}{c}
 1+c-a,1+c,c,d+1,\bfe+1\\ a+1,2,1+c-d,\bff+1
\end{array} \middle| z\!\right)\sum_{k=1}^{\infty} \cU_2(k) z^k \notag\\
&-g\phi\left(\begin{array}{c}
 1+c-a,c-1,c,d-1,\bfe\\ a-1,0,1+c-d,\bff
\end{array} \middle| z\right)\sum_{k=0}^{\infty} \cV_2(k) z^k=c(a-d)(1+c-a-d) \notag\\
& F\left(\begin{array}{c}
 1+c-a,c-1,c-1,d,\bfe\\ a-1,1,1+c-d-1,\bff
\end{array} \middle| z\right)F\left(\begin{array}{c}
 1+c-a,1+c,c+1,d,\bfe+1\\ a+1,1,1+c-d+1,\bff+1
\end{array} \middle| z\right) \notag\\
&-g \widehat{g}_1  \,\phi\left(\begin{array}{c}
 c-a+2,1+c,c,d,\bfe+1\\ a,2,c-d+2,\bff+1
\end{array} \middle| z\right)\phi\left(\begin{array}{c}
 c-a,c-1,c,d,\bfe\\ a,0,c-d,\bff
\end{array} \middle| z\right) \notag\\
&+g\widehat{g}_2 \, \phi\left(\begin{array}{c}
 1+c-a,1+c,c,d+1,\bfe+1\\ a+1,2,1+c-d,\bff+1
\end{array} \middle| z\right)\phi\left(\begin{array}{c}
 1+c-a,c-1,c,d-1,\bfe\\ a-1,0,1+c-d,\bff
\end{array} \middle| z\right) \notag\\
&-a(c-d)(1-d)(1+c-a) F\left(\begin{array}{c}
 c-a+2,1+c,c,d,\bfe+1\\ a,2,c-d+2,\bff+1
\end{array} \middle| z\right) \notag\\
&+d(c-a)(1-a)(1+c-d) F\left(\begin{array}{c}
 1+c-a,1+c,c,d+1,\bfe+1\\ a+1,2,1+c-d,\bff+1
\end{array} \middle| z\right).
\end{align} 
Next, it is easy to check that 
\begin{align*}
&\phi\left(\begin{array}{c}
 c-a,c-1,c,d,\bfe\\ a,0,c-d,\bff
\end{array} \middle| z\right) = z\phi\left(\begin{array}{c}
 c-a+1,c,c+1,d+1,\bfe+1\\ a+1,2,c-d+1,\bff+1
\end{array} \middle| z\right)\\
&\phi\left(\begin{array}{c}
 c-a+1,c-1,c,d-1,\bfe\\ a-1,0,c-d+1,\bff
\end{array} \middle| z\right)=z \phi\left(\begin{array}{c}
 c-a+2,c,c+1,d,\bfe+1\\ a,2,c-d+2,\bff+1
\end{array} \middle| z\right)\\
&\sum_{k=1}^{\infty} \cU_1(k) z^k = \sum_{k=0}^{\infty} \cU_1(k+1) z^{k+1}, \quad \sum_{k=1}^{\infty} \cU_2(k) z^k=\sum_{k=0}^{\infty} \cU_2(k+1) z^{k+1}.
\end{align*}
Substituting these into \eqref{eq:contiguous-identity-intermediate}, we obtain
\begin{align*}
& g\phi\left(\begin{array}{c}
 c-a+2,1+c,c,d,\bfe+1\\ a,2,c-d+2,\bff+1
\end{array} \middle| z\right)\sum_{k=0}^{\infty} \cU_1(k+1) z^{k+1}\\
&+ g\phi\left(\begin{array}{c}
 c-a+1,c,c+1,d+1,\bfe+1\\ a+1,2,c-d+1,\bff+1
\end{array} \middle| z\right)\sum_{k=0}^{\infty} \cV_1(k) z^{k+1} \notag \\
&-g \phi\left(\begin{array}{c}
 1+c-a,1+c,c,d+1,\bfe+1\\ a+1,2,1+c-d,\bff+1
\end{array} \middle| z\right)\sum_{k=0}^{\infty} \cU_2(k+1) z^{k+1} \notag\\
&-g\phi\left(\begin{array}{c}
 c-a+2,c,c+1,d,\bfe+1\\ a,2,c-d+2,\bff+1
\end{array} \middle| z\right)\sum_{k=0}^{\infty} \cV_2(k) z^{k+1}=c(a-d)(1+c-a-d) \notag\\
& F\left(\begin{array}{c}
 1+c-a,c-1,c-1,d,\bfe\\ a-1,1,c-d,\bff
\end{array} \middle| z\right)F\left(\begin{array}{c}
 1+c-a,1+c,c+1,d,\bfe+1\\ a+1,1,c-d+2,\bff+1
\end{array} \middle| z\right) \notag\\
&-g \widehat{g}_1  \,\phi\left(\begin{array}{c}
 c-a+2,1+c,c,d,\bfe+1\\ a,2,c-d+2,\bff+1
\end{array} \middle| z\right)z \phi\left(\begin{array}{c}
 c-a+1,c,c+1,d+1,\bfe+1\\ a+1,2,c-d+1,\bff+1
\end{array} \middle| z\right) \notag\\
&+g\widehat{g}_2 \, \phi\left(\begin{array}{c}
 1+c-a,1+c,c,d+1,\bfe+1\\ a+1,2,1+c-d,\bff+1
\end{array} \middle| z\right)z \phi\left(\begin{array}{c}
 c-a+2,c,c+1,d,\bfe+1\\ a,2,c-d+2,\bff+1
\end{array} \middle| z\right) \notag\\
&-a(c-d)(1-d)(1+c-a) F\left(\begin{array}{c}
 c-a+2,1+c,c,d,\bfe+1\\ a,2,c-d+2,\bff+1
\end{array} \middle| z\right) \notag\\
&+d(c-a)(1-a)(1+c-d) F\left(\begin{array}{c}
 1+c-a,1+c,c,d+1,\bfe+1\\ a+1,2,1+c-d,\bff+1
\end{array} \middle| z\right).
\end{align*} 
This implies
\begin{align*}
& g\phi\left(\begin{array}{c}
 c-a+2,1+c,c,d,\bfe+1\\ a,2,c-d+2,\bff+1
\end{array} \middle| z\right)\sum_{k=0}^{\infty} [\cU_1(k+1)-\cV_2(k)] z^{k+1}\\
&+ g\phi\left(\begin{array}{c}
 c-a+1,c,c+1,d+1,\bfe+1\\ a+1,2,c-d+1,\bff+1
\end{array} \middle| z\right)\sum_{k=0}^{\infty} [\cV_1(k)-\cU_2(k+1)] z^{k+1}=c(a-d) \notag \\
&(1+c-a-d) F\!\left(\!\!\begin{array}{c}
 1+c-a,c-1,c-1,d,\bfe\\ a-1,1,c-d,\bff
\end{array} \middle| z\!\right)F\!\left(\!\!\begin{array}{c}
 1+c-a,1+c,c+1,d,\bfe+1\\ a+1,1,c-d+2,\bff+1
\end{array} \middle| z\!\right) \notag\\
&+zg (\widehat{g}_2-\widehat{g}_1)  \,\phi\left(\begin{array}{c}
 c-a+2,1+c,c,d,\bfe+1\\ a,2,c-d+2,\bff+1
\end{array} \middle| z\right) \phi\left(\begin{array}{c}
 c-a+1,c,c+1,d+1,\bfe+1\\ a+1,2,c-d+1,\bff+1
\end{array} \middle| z\right) \notag\\
&-a(c-d)(1-d)(1+c-a) F\!\left(\!\!\begin{array}{c}
 c-a+2,1+c,c,d,\bfe+1\\ a,2,c-d+2,\bff+1
\end{array} \middle| z\right) \notag\\
&+d(c-a)(1-a)(1+c-d) F\left(\begin{array}{c}
 1+c-a,1+c,c,d+1,\bfe+1\\ a+1,2,1+c-d,\bff+1
\end{array} \middle| z\right).
\end{align*} 
We proceed with some simplifications. First, 
\begin{align*}
\cU_1(k+1)-\cV_2(k)&\!=\!\frac{\Gamma(1+c-a+k)\Gamma(1+c+k)\Gamma(c+k)\Gamma(d+1+k)\Gamma(\bfe+1+k)}{\Gamma(a+1+k)\Gamma(2+k)\Gamma(1+c-d+k)\Gamma(\bff+1+k)k!} \notag\\
&\times\big[\psi(c+k)-\psi(c+1+k)+\psi(k+2)-\psi(k+1)\big]\\
&= \frac{(c-1)\Gamma(1+c-a+k)\Gamma^2(c+k)\Gamma(d+1+k)\Gamma(\bfe+1+k)}{\Gamma(a+1+k)\Gamma(2+k)\Gamma(1+c-d+k)\Gamma(\bff+1+k)(k+1)!}.
\end{align*}
Similarly, 
\begin{equation*}
\cV_1(k)-\cU_2(k+1)= - \frac{(c-1)\Gamma(c-a+2+k)\Gamma^2(c+k)\Gamma(d+k)\Gamma(\bfe+1+k)}{\Gamma(a+k)\Gamma(2+k)\Gamma(c-d+2+k)\Gamma(\bff+1+k)(k+1)!}.
\end{equation*}
From definitions \eqref{eq:hat-g1-value},\eqref{eq:hat-g2-value} and in view of the recurrence $\psi(x+1)=\psi(x)+1/x$, we deduce that the coefficient $\widehat{g}_2-\widehat{g}_1$ simplifies as follows: 
\begin{align*}
&\widehat{g}_2-\widehat{g}_1=\frac{1}{1-a} + \frac{1}{1+c-d}-\frac{1}{1-d} - \frac{1}{1+c-a}
\\
&
-\psi(c-d)+2\psi(c-d+1)-\psi(c-d+2)+\psi(c-a)- 2\psi(c-a+1)+\psi(c-a+2)\\
&=\frac{1}{1-a} + \frac{1}{1+c-d}-\frac{1}{1-d} - \frac{1}{1+c-a}+\frac{1}{(c-d)(c-d+1)}-\frac{1}{(c-a)(c-a+1)}\\
&=\frac{1}{1-a}-\frac{1}{1-d}+\frac{1}{c-d}-\frac{1}{c-a}=\frac{(a-d)(c-1)(1+c-a-d)}{(1-a)(1-d)(c-d)(c-a)}.
\end{align*}
Further, converting all $\phi$ functions to hypergeometric functions,  using the expression \eqref{eq:g1-value} for $g$ and considering
\begin{align*}
&\sum_{k=0}^{\infty}\frac{\Gamma(x_1+k+1)\Gamma(x_2+k)\Gamma(x'_2+k+1)\Gamma(x_3+k+1)\Gamma(x_4+k+1) z^{k+1}}{\Gamma(2+k)\Gamma(x_5+k+1) \Gamma(x_6+k+1)\Gamma(x_7+k+1) (k+1)!} \\
&= \sum_{n=1}^{\infty} \frac{\Gamma(x_1+n)\Gamma^2(x_2+n-1)\Gamma(x'_2+n)\Gamma(x_3+n)\Gamma(x_4+n) z^n}{\Gamma(n+1)\Gamma(x_5+n) \Gamma(x_6+n)\Gamma(x_7+n) n!}\\
&=\frac{\Gamma(x_1)\Gamma(x_2-1)\Gamma(x'_2)\Gamma(x_3)\Gamma(x_4)}{\Gamma(x_5)\Gamma(x_6)\Gamma(x_7)} \bigg[ F\left(\begin{array}{c} 
 x_1,x_2-1,x'_2,x_3,x_4\\ 1, x_5,x_6,x_7
\end{array} \middle| z\right) -1\bigg]
\end{align*}
for $x_1 \in \{c-a,c-a+1 \}$, $x_2=c$, $x'_2=c-1$ $x_3 \in \{d,d-1 \}$, $x_4= \bfe$, $x_5 \in \{a,a-1 \}$, $x_6 \in \{c-d,c-d+1 \}$ and $x_7=\bff$, we get
\begin{align*}
& a(c-d)(1-d)(1+c-a)F\!\left(\!\!\begin{array}{c}
 c-a+2,1+c,c,d,\bfe+1\\ a,2,c-d+2,\bff+1
\end{array} \middle| z\!\right)\\
&\times\bigg[F\!\left(\!\!\begin{array}{c}
 c-a,c-1,c-1,d,\bfe\\ 1,a,c-d,\bff
\end{array} \middle| z\!\right)-1 \bigg]-d(c-a)(1-a)(1+c-d) \\
&\times F\!\left(\!\!\begin{array}{c}
 c-a+1,c,c+1,d+1,\bfe+1\\ a+1,2,c-d+1,\bff+1
\end{array} \middle| z\!\right)\bigg[F\!\left(\!\!\begin{array}{c}
 c-a+1,c-1,c-1,d-1,\bfe\\ 1,a-1,c-d+1,\bff
\end{array} \middle| z\!\right)-1 \bigg]\\
&=c(a-d)(1+c-a-d) F\!\left(\!\!\begin{array}{c}
 1+c-a,c-1,c-1,d,\bfe\\ a-1,1,c-d,\bff
\end{array} \middle| z\!\!\right)\\
&\times\! F\!\left(\!\!\begin{array}{c}
 1+c-a,1+c,c+1,d,\bfe+1\\ a+1,1,c-d+2,\bff+1
\end{array} \middle| z\!\right)+z \frac{cd(a\!-\!d)(1\!+\!c\!-\!a)(1\!+\!c\!-\!a\!-\!d)(c\!-\!1)^2(\bfe)_1}{(c-d)(1-a)(\bff)_1}  \notag\\
&  \times F\!\left(\!\!\begin{array}{c}
 c-a+2,1+c,c,d,\bfe+1\\ a,2,c-d+2,\bff+1
\end{array} \middle| z\right) F\!\left(\!\!\begin{array}{c} 
 c-a+1,c,c+1,d+1,\bfe+1\\ a+1,2,c-d+1,\bff+1
\end{array} \middle| z\right) \notag\\
&-a(c-d)(1-d)(1+c-a) F\!\left(\!\!\begin{array}{c}
 c-a+2,1+c,c,d,\bfe+1\\ a,2,c-d+2,\bff+1
\end{array} \middle| z\right) \notag\\
&+d(c-a)(1-a)(1+c-d) F\!\left(\!\!\begin{array}{c}
 1+c-a,1+c,c,d+1,\bfe+1\\ a+1,2,1+c-d,\bff+1
\end{array} \middle| z\right),
\end{align*} 
Now canceling identical terms on the left-hand and right-hand sides
of the identity, we arrive at the conclusion of Theorem \ref{th:contiguous-degenerate-identity}. \end{proof}

\begin{remark}
Note that while differentiating 
\begin{multline*}
\phi\left(\begin{array}{c}
 \epsilon+c-a,\epsilon+c-1,c,d,\bfe\\ a,\epsilon,\epsilon+c-d,\bff
\end{array} \middle| z\right) \\= \sum_{k=0}^{\infty} \frac{\Gamma(\epsilon+c-a+k)\Gamma(\epsilon+c-1+k)\Gamma(c+k)\Gamma(d+k)\Gamma(\bfe+k) z^k}{\Gamma(a+k)\Gamma(\epsilon+k)\Gamma(\epsilon+c-d+k)\Gamma(\bff+k)k!}
\end{multline*}
with respect to $\epsilon$ in the previous theorem, one has to deal with the case when $k=0$ and $k \geq 1$ separately since when $k=0$, the first term contains the factor $\frac{\Gamma(c-a+\epsilon)\Gamma(c-1+\epsilon)}{\Gamma(\epsilon)\Gamma(c-d+\epsilon)}$ which vanishes at $\epsilon=0$ so that its logarithmic derivative is singular. Its derivative with respect to $\epsilon$ evaluated at $\epsilon=0$ is calculated as follows. Let
\begin{align}\label{eq:t-epsilon}
    t(\epsilon)=\frac{\Gamma(c-a+\epsilon)\Gamma(c-1+\epsilon)}{\Gamma(\epsilon)\Gamma(c-d+\epsilon)}.
\end{align}
The behavior of individual gamma functions involved in the above expression is given by
\begin{align*}
    \frac{1}{\Gamma(\epsilon)} = \epsilon+\gamma\epsilon^2+O(\epsilon^3), \quad \Gamma(T+\epsilon)= \Gamma(T)[1+\epsilon \psi(T)+O(\epsilon^2)],
\end{align*}
where $T$ is either $c-a$, $c-1$ or $c-d$, and $\gamma$ is the Euler's constant. Substituting in \eqref{eq:t-epsilon}, we have
\begin{align*}
 t(\epsilon)&= \frac{\Gamma(c-a)\Gamma(c-1)}{\Gamma(c-d)} \frac{[1+\epsilon\psi(c-a)] [1+\epsilon\psi(c-1)]}{[1+\epsilon\psi(c-d)]}[\epsilon+\gamma\epsilon^2+O(\epsilon^3)]\\
 &=\frac{\Gamma(c-a)\Gamma(c-1)}{\Gamma(c-d)} [\epsilon+(\gamma+\psi(c-a)+\psi(c-1)-\psi(c-d))\epsilon^2+O(\epsilon^3)].
\end{align*}
Differentiating and evaluating at $\epsilon=0$, we get $t'(0)=\frac{\Gamma(c-a)\Gamma(c-1)}{\Gamma(c-d)}$. A similar procedure needs to be followed while differentiating $$
\phi\left(\begin{array}{c}
 1+\epsilon+c-a,\epsilon+c-1,c,d-1,\bfe\\ a-1,\epsilon,1+\epsilon+c-d,\bff
\end{array} \middle| z\right)
$$ 
appearing in $h_2(\epsilon)$ of the previous theorem.
\end{remark}

\begin{proof}[Proof of Theorem~\ref{th:two-index-degenerate-identity}] Identity \eqref{eq:two-index-contiguous-identity} is valid  whenever $b_1 \neq 1$. Our goal is to compute the limit as $b_1\to1$. To this end, substituting $b_1=1+\epsilon$ in \eqref{eq:two-index-contiguous-identity} and using \eqref{eq:regularized-hypergeometric}, we obtain
\begin{align}\label{eq:two-index-identity-regularized}
&\frac{a_1(a_2-1-\epsilon)\Gamma(\epsilon) \Gamma(\bfb_{[1]}) \Gamma(\epsilon+2) \Gamma(\bfb_{[1]}+1)}{\Gamma(a_1-1)\Gamma(\bfa_{[1]}) \Gamma(a_2) \Gamma(\bfa_{[2]}+1)}  \phi\left(\begin{array}{c}
 a_1-1,\bfa_{[1]}\\ \epsilon,\bfb_{[1]}
\end{array} \middle| z\right) \phi\left(\begin{array}{c}
 a_2,\bfa_{[2]}+1\\ \epsilon+2, \bfb_{[1]}+1
\end{array} \middle| z\right) \notag\\
&-\frac{a_2(a_1-1-\epsilon)\Gamma(\epsilon) \Gamma(\bfb_{[1]}) \Gamma(\epsilon+2) \Gamma(\bfb_{[1]}+1)}{\Gamma(a_2-1)\Gamma(\bfa_{[2]}) \Gamma(a_1) \Gamma(\bfa_{[1]}+1)}  \phi\left(\begin{array}{c}
 a_2-1,\bfa_{[2]}\\ \epsilon,\bfb_{[1]}
\end{array} \middle| z\right) \phi\left(\begin{array}{c}
 a_1,\bfa_{[1]}+1\\ \epsilon+2, \bfb_{[1]}+1
\end{array} \middle| z\right)\notag\\
&=(1+\epsilon)(a_2-a_1) F\left(\begin{array}{c}
 \bfa\\ 1+\epsilon,\bfb_{[1]}
\end{array} \middle| z\right) F\left(\begin{array}{c}
 a_1,a_2,\bfa_{[1,2]}+1\\ 1+\epsilon,\bfb_{[1]}+1
\end{array} \middle| z\right).
\end{align}
Consider the functions
\begin{align*}
&f_1(\epsilon)=\Gamma(\epsilon) h_1(\epsilon)=\Gamma(\epsilon)g_1(\epsilon) \phi\left(\begin{array}{c}
 a_1-1,\bfa_{[1]}\\ \epsilon,\bfb_{[1]}
\end{array} \middle| z\right) \phi\left(\begin{array}{c}
 a_2,\bfa_{[2]}+1\\ \epsilon+2, \bfb_{[1]}+1
\end{array} \middle| z\right),\\
&f_2(\epsilon)=\Gamma(\epsilon) h_2(\epsilon)=\Gamma(\epsilon)g_2(\epsilon) \phi\left(\begin{array}{c}
 a_2-1,\bfa_{[2]}\\ \epsilon,\bfb_{[1]}
\end{array} \middle| z\right) \phi\left(\begin{array}{c}
 a_1,\bfa_{[1]}+1\\ \epsilon+2, \bfb_{[1]}+1
\end{array} \middle| z\right),
\end{align*}
where
\begin{align*}
&g_1(\epsilon) =\frac{a_1(a_2-1-\epsilon) \Gamma(\bfb_{[1]}) \Gamma(\epsilon+2) \Gamma(\bfb_{[1]}+1)}{\Gamma(a_1-1)\Gamma(\bfa_{[1]}) \Gamma(a_2) \Gamma(\bfa_{[2]}+1)},\\
&g_2(\epsilon) =\frac{a_2(a_1-1-\epsilon)\Gamma(\bfb_{[1]}) \Gamma(\epsilon+2) \Gamma(\bfb_{[1]}+1)}{\Gamma(a_2-1)\Gamma(\bfa_{[2]}) \Gamma(a_1) \Gamma(\bfa_{[1]}+1)}.
\end{align*} 
Expand  $h_1(\epsilon)$ and $h_2(\epsilon)$ as in \eqref{eq:contiguous-degeneration-limit}. Now, we calculate $h'_1(0)$ and $h'_2(0)$.
\begin{align*}
&h'_1(0) = \left[ \frac{\partial}{\partial\epsilon} h_1(\epsilon)  \right]_{\epsilon = 0}= \bigg(\psi(2)-\frac{1}{a_2-1} \bigg) g_1(0) \phi\left(\begin{array}{c}
 a_1-1,\bfa_{[1]}\\ 0,\bfb_{[1]}
\end{array} \middle| z\right)\\
& \phi\left(\begin{array}{c}
 a_2,\bfa_{[2]}+1\\ 2, \bfb_{[1]}+1
\end{array} \middle| z\right)+g_1(0)\bigg[\frac{\Gamma(a_1-1)\Gamma(\bfa_{[1]})}{\Gamma(\bfb_{[1]})} -\sum_{k=1}^{\infty} \frac{\Gamma(a_1-1+k)\Gamma(\bfa_{[1]}+k) \psi(k)z^k}{\Gamma(k)\Gamma(\bfb_{[1]}+k)k!} \bigg]\\
& \phi\left(\begin{array}{c}
 a_2,\bfa_{[2]}+1\\ 2, \bfb_{[1]}+1
\end{array} \middle| z\right)-g_1(0) \phi\left(\begin{array}{c}
 a_1-1,\bfa_{[1]}\\ 0,\bfb_{[1]}
\end{array} \middle| z\right)\sum_{k=0}^{\infty}\frac{\Gamma(a_2+k)\Gamma(\bfa_{[2]}+1+k) \psi(2+k)z^k}{\Gamma(2+k)\Gamma(\bfb_{[1]}+1+k)k!}.
\end{align*}
Similarly,
\begin{align*}
&h'_2(0) = \left[ \frac{\partial}{\partial\epsilon} h_2(\epsilon)  \right]_{\epsilon = 0}= \bigg(\psi(2)-\frac{1}{a_1-1} \bigg) g_2(0) \phi\left(\begin{array}{c}
 a_2-1,\bfa_{[2]}\\ 0,\bfb_{[1]}
\end{array} \middle| z\right)\\
& \phi\left(\begin{array}{c}
 a_1,\bfa_{[1]}+1\\ 2, \bfb_{[1]}+1
\end{array} \middle| z\right)+g_2(0) \bigg[\frac{\Gamma(a_2-1)\Gamma(\bfa_{[2]})}{\Gamma(\bfb_{[1]})} -\sum_{k=1}^{\infty} \frac{\Gamma(a_2-1+k)\Gamma(\bfa_{[2]}+k) \psi(k)z^k}{\Gamma(k)\Gamma(\bfb_{[1]}+k)k!} \bigg]\\
& \phi\left(\begin{array}{c}
 a_1,\bfa_{[1]}+1\\ 2, \bfb_{[1]}+1
\end{array} \middle| z\right)-g_2(0) \phi\left(\begin{array}{c}
 a_2-1,\bfa_{[2]}\\ 0,\bfb_{[1]}
\end{array} \middle| z\right)\sum_{k=0}^{\infty}\frac{\Gamma(a_1+k)\Gamma(\bfa_{[1]}+1+k) \psi(2+k)z^k}{\Gamma(2+k)\Gamma(\bfb_{[1]}+1+k)k!}.
\end{align*}
Clearly, $g_1(0)=g_2(0)$ which will be denoted by $g$, i.e.,
\begin{align*}
g_1(0)=g_2(0)=\frac{a_1a_2(a_1-1)(a_2-1) \Gamma(\bfb_{[1]}) \Gamma(\bfb_{[1]}+1)}{\Gamma(\bfa)  \Gamma(\bfa+1)}=:g.
\end{align*} 
Substituting these derivatives into the limit formula \eqref{eq:contiguous-degeneration-limit}
and then into \eqref{eq:two-index-identity-regularized} gives a considerably shorter
expression after two elementary simplifications. Namely,
\[
\phi\left(\begin{array}{c}a_i-1,\mathbf a_{[i]}\\0,\mathbf b_{[1]}\end{array}\middle|z\right)
=z\phi\left(\begin{array}{c}a_i,\mathbf a_{[i]}+1\\2,\mathbf b_{[1]}+1\end{array}\middle|z\right)
\qquad(i=1,2),
\]
and the sums beginning at \(k=1\) are shifted to begin at \(k=0\).
Pairing the shifted sums and using
\(\psi(k+2)-\psi(k+1)=1/(k+1)\), we obtain
\begin{align*}
&g\bigg[\phi\left(\begin{array}{c}
 a_2,\bfa_{[2]}+1\\ 2, \bfb_{[1]}+1
\end{array} \middle| z\right)\sum_{k=0}^{\infty}\frac{\Gamma(a_1+k)\Gamma(\bfa_{[1]}+1+k) z^{k+1}}{\Gamma(2+k)\Gamma(\bfb_{[1]}+1+k) (k+1)!} \\
&\qquad-\phi\left(\begin{array}{c}
 a_1,\bfa_{[1]}+1\\ 2, \bfb_{[1]}+1
\end{array} \middle| z\right)\sum_{k=0}^{\infty}\frac{\Gamma(a_2+k)\Gamma(\bfa_{[2]}+1+k) z^{k+1}}{\Gamma(2+k)\Gamma(\bfb_{[1]}+1+k) (k+1)!}\bigg]\\
&=(a_2-a_1) F\left(\begin{array}{c}
 \bfa\\ 1,\bfb_{[1]}
\end{array} \middle| z\right) F\left(\begin{array}{c}
 a_1,a_2,\bfa_{[1,2]}+1\\ 1,\bfb_{[1]}+1
\end{array} \middle| z\right) \\
&\quad+g\frac{a_1-a_2}{(a_1-1)(a_2-1)}z
 \phi\left(\begin{array}{c}a_2,\bfa_{[2]}+1\\2,\bfb_{[1]}+1\end{array}\middle|z\right)
 \phi\left(\begin{array}{c}a_1,\bfa_{[1]}+1\\2,\bfb_{[1]}+1\end{array}\middle|z\right)\\
&\quad+a_2(a_1-1)F\left(\begin{array}{c}
 a_1,\bfa_{[1]}+1\\2,\bfb_{[1]}+1
\end{array}\middle|z\right)
-a_1(a_2-1)F\left(\begin{array}{c}
 a_2,\bfa_{[2]}+1\\2,\bfb_{[1]}+1
\end{array}\middle|z\right).
\end{align*}
Further, converting all $\phi$ functions to hypergeometric using the expression of $g$ and considering
\begin{align*}
&\sum_{k=0}^{\infty}\frac{\Gamma(x+k)\Gamma(y+1+k) z^{k+1}}{\Gamma(2+k)\Gamma(w+1+k) (k+1)!} = \sum_{n=1}^{\infty} \frac{\Gamma(x+n-1)\Gamma(y+n) z^{n}}{\Gamma(n+1)\Gamma(w+n) n!}\\
&=\frac{\Gamma(x-1)\Gamma(y)}{\Gamma(w)} \bigg[ F\left(\begin{array}{c} 
 x-1,y\\ 1, w
\end{array} \middle| z\right) -1\bigg]
\end{align*}
for $x \in \{a_1,a_2 \}$, $y \in \{\bfa_{[1]},\bfa_{[2]} \}$, and $w=\bfb_{[1]}$, we obtain
\begin{align*}
&a_1(a_2-1) F\left(\begin{array}{c}
 a_2,\bfa_{[2]}+1\\ 2, \bfb_{[1]}+1
\end{array} \middle| z\right)\bigg[ F\left(\begin{array}{c} 
 a_1-1,\bfa_{[1]}\\ 1, \bfb_{[1]}
\end{array} \middle| z\right) -1\bigg]  \notag\\
& -a_2(a_1-1) F\left(\begin{array}{c}
 a_1,\bfa_{[1]}+1\\ 2, \bfb_{[1]}+1
\end{array} \middle| z\right) \bigg[ F\left(\begin{array}{c} 
 a_2-1,\bfa_{[2]}\\ 1, \bfb_{[1]}
\end{array} \middle| z\right) -1\bigg]\notag\\
&=(a_2-a_1) F\left(\begin{array}{c}
 \bfa\\ 1,\bfb_{[1]}
\end{array} \middle| z\right) F\left(\begin{array}{c}
 a_1,a_2,\bfa_{[1,2]}+1\\ 1,\bfb_{[1]}+1
\end{array} \middle| z\right) \notag\\
&+z\frac{(\bfa)_1(a_1-a_2)}{(\bfb_{[1]})_1}    F\left(\begin{array}{c} 
 a_2,\bfa_{[2]}+1\\2, \bfb_{[1]}+1
\end{array} \middle| z\right)  F\left(\begin{array}{c}
 a_1,\bfa_{[1]}+1\\ 2, \bfb_{[1]}+1
\end{array} \middle| z\right) \notag\\
&+a_2(a_1-1) F\left(\begin{array}{c}
 a_1,\bfa_{[1]}+1\\ 2, \bfb_{[1]}+1
\end{array} \middle| z\right)-a_1(a_2-1) F\left(\begin{array}{c}
 a_2,\bfa_{[2]}+1\\ 2, \bfb_{[1]}+1
\end{array} \middle| z\right).
\end{align*}
Canceling the identical single-function terms on the two sides gives
\eqref{eq:two-index-degenerate-identity}, and hence proves Theorem~\ref{th:two-index-degenerate-identity}. \end{proof}

\end{document}